\tikzstyle{vertex}=[auto=left,circle,draw=black,fill=white, inner sep=1.5]
\newtheorem{theorem}{Theorem}[section]
\newtheorem{lema}[theorem]{Lemma}
\newtheorem{corollary}{Corollary}[theorem]
\title{State Transfer on Unitary Cayley Graphs and Quadratic Unitary Cayley Graphs}
\author{ Akash Kalita and Bikash Bhattacharjya\\
Department of Mathematics\\
Indian Institute of Technology Guwahati, India\\
akash.kalita@iitg.ac.in, b.bikash@iitg.ac.in }
\date{}
\begin{document}
\maketitle

\vspace{-0.3in}

\begin{center}{\textbf{Abstract}}\end{center}
The unitary Cayley graph, denoted $X_n$, is the graph with vertex set ${\mathbb{Z}}_n$ such that two distinct vertices $a$ and $b$ are adjacent if $a-b=u$ for some $u$ with $1 \leq u \leq n-1$ and $\gcd(u,n) = 1$. The quadratic unitary Cayley graph, denoted $G_n$, is the graph with vertex set ${\mathbb{Z}}_n$ such that two distinct vertices $a$ and $b$ are adjacent if $a-b=u^2$ or $a-b=-u^2$ for some $u$ with $1 \leq u \leq n-1$ and $\gcd(u,n) = 1$. In this paper, we classify all $X_n$ admitting pretty good fractional. We also classify all $X_n$ that admit fractional revival. It turns out that $X_n$ admits fractional revival if and only if it admits pretty good fractional revival. Further, we classify all $G_n$ admitting periodicity. As a consequence, we obtain all $G_n$ admitting perfect state transfer. We also classify $G_n$ admitting pretty good state transfer, pretty good fractional revival and fractional revival. 
%

\noindent 
\textbf{Keywords:} Unitary Cayley graph, quadratic unitary Cayley graph, periodicity, perfect state transfer, pretty good state transfer, fractional revival, pretty good fractional revival \\
\textbf{Mathematics Subject Classifications:} 11A07, 15A16, 05C50, 81P45

\section{Introduction}\label{sec 1}
A quantum spin network is represented by a simple undirected graph, where the vertices correspond to qubits and edges indicate their interactions. Farhi and Gutmann~\cite{Farhi} demonstrated that the transfer of information in quantum spin networks can be represented using continuous-time quantum walks on graphs. Bose~\cite{Bose} developed the use of continuous-time quantum walks to transmit quantum states between two locations. A graph admitting perfect state transfer from a vertex $a$ to another vertex $b$ represents a quantum spin network such that the state of the qubit at $a$ can be transferred to the state of the qubit at $b$ without any loss of information. The study of perfect state transfer was initiated by Christandl et al.~\cite{Christandl 1, Christandl 2}, and it has since drawn significant interest from researchers in algebraic graph theory. However, in a simple undirected graph, perfect state transfer occurs rarely, as indicated by Godsil~\cite{When can perfect state transfer occur}. Therefore, various generalizations of perfect state transfer have been studied in the literature. One such generalization is pretty good state transfer, which was independently introduced by Godsil~\cite{State transfer on graphs} and by Vinet and Zhedanov~\cite{Vinet} in 2012. Fractional revival~\cite{Chan FR} is another generalization of perfect state transfer that has been studied recently. Fractional revival can be used for entanglement generation in quantum spin networks. In 2021, Chan et al.~\cite{PGFR Adjacency} generalized the notions of pretty good state transfer and fractional revival, referred to as pretty good fractional revival. 

Throughout the paper, the word graph represents a finite simple undirected graph with the adjacency matrix $A$ and $\mathbf{i}$ denotes the complex number $\sqrt{-1}$. Also, ${\mathbb{R}}^+$ denotes the set of all positive real numbers. For a vertex $a$ of a graph $\Gamma$, let $\textbf{e}_{a}$ denote the column vector indexed by the vertices of $\Gamma$ such that the $a$-th entry is one and zero elsewhere. The \textit{transition matrix} of $\Gamma$, denoted $H(t)$, is defined as the following matrix-valued function
\begin{align*}
H(t):=\exp(-\mathbf{i}t A)=\sum_{j=0}^{\infty} \frac{(-\mathbf{i}t A)^j}{j!},~~~\mathrm{where}~t~\mathrm{is~a~real~number}. 
\end{align*}
The transition matrix $H(t)$ determines a \textit{continuous-time quantum walk} on $\Gamma$. The graph $\Gamma$ is said to admit \textit{fractional revival} (FR) from a vertex $a$ to another vertex $b$ if there exists $t \in {\mathbb{R}}^+$ such that   
$$H(t){\textbf{e}_{a}}=\alpha{\textbf{e}_{a}}+\beta{\textbf{e}_{b}},$$ 
where $\alpha$ and $\beta$ are complex numbers with $\beta \neq 0$, and $|\alpha|^2+|\beta|^2=1$. As a specific case, if $\alpha=0$ then $\Gamma$ is said to admit \textit{perfect state transfer} (PST) from $a$ to $b$. The graph $\Gamma$ is said to admit \textit{periodicity} if there exists $t \in {\mathbb{R}}^+$ and a complex number $\gamma$ of unit modulus such that 
$$H(t)=\gamma I,~~~\mathrm{where}~I~\mathrm{is~the~identity~matrix}.$$
The graph $\Gamma$ is said to admit \textit{pretty good fractional revival} (PGFR) from a vertex $a$ to another vertex $b$ if there exists a sequence $\{t_k\}$ in $\mathbb{R}$ such that   
$$\lim_{k\to\infty} H(t_k){\textbf{e}_{a}}=\alpha{\textbf{e}_{a}}+\beta{\textbf{e}_{b}},$$ 
where $\alpha$ and $\beta$ are complex numbers with $\beta \neq 0$, and $|\alpha|^2+|\beta|^2=1$. As a specific case, if $\alpha=0$ then $\Gamma$ is said to admit \textit{pretty good state transfer} (PGST) from $a$ to $b$.  
The five phenomena periodicity, PST, PGST, FR and PGFR fall under the general notion of what is called \textit{state transfer} on graphs.

Let $S$ be a subset of a finite group $G$ such that it does not contain the identity element of $G$ and $S=\{y^{-1}\colon y \in S\}$. Such a subset is called a \textit{connection set} in $G$. The \textit{Cayley graph} over $G$ with the connection set $S$, denoted $\mathrm{Cay}(G, S)$, is the graph whose vertex set is $G$ such that two distinct vertices $a$ and $b$ are adjacent if $a^{-1}b \in S$. As a specific case, if $G$ is the group ${\mathbb{Z}}_n$, then the Cayley graph $\mathrm{Cay}(G, S)$ is called a \textit{circulant graph}. A \textit{cycle} on $n$ vertices, denoted $C_n$, is the circulant graph $\mathrm{Cay}({\mathbb{Z}}_n, \{1, n-1\})$. Let $U(n)$ denote the set $\{u \colon 1 \leq u \leq n-1~\mathrm{and}~\gcd(u,n)=1\}$. Also, let $Q_n=\{u^2 \colon u \in U(n)\}$ and $-Q_n=\{-u^2 \colon u \in U(n)\}$. Then the \textit{unitary Cayley graph} $X_n$ is defined as the circulant graph $\mathrm{Cay}({\mathbb{Z}}_n, U(n))$. The \textit{quadratic unitary Cayley graph} $G_n$ is defined as the circulant graph $\mathrm{Cay}({\mathbb{Z}}_n, T_n)$, where $T_n = Q_n \cup (-Q_n)$. 

In the last two decades, the exploration of state transfer on Cayley graphs has received considerable attention due to the concise expressions for their eigenvalues and eigenvectors. Another advantage of this class is the ability to identify which pairs of vertices might be involved in state transfer. Basic et al.~\cite{Basic} explored PST on integral circulant graphs. Tan et al.~\cite{Tan} studied periodicity and PST on Cayley graphs over abelian groups. Pal and Bhattacharjya~\cite{Pal circulant}, and Pal~\cite{More circulant graphs, State transfer on circulant Pal} explored PGST on circulant graphs. A characterization for the existence of FR on Cayley graphs over abelian groups was provided independently by Wang et al.~\cite{J. Wang} and by Cao and Luo~\cite{Cao}. The authors in~\cite{Kalita} obtained a necessary and sufficient condition for the existence of PGFR on Cayley graphs over abelian groups. They  also generalized several pre-existing results of PGFR on circulant graphs. Wang et al.~\cite{PGFR Nonabelian} studied PGFR on normal Cayley graphs over dicyclic groups. Further research on PGFR was conducted by Chan et al.~\cite{PGFR Laplacian} and Drazen et al.~\cite{PGFR diagonal perturbation}. 

Klotz and Sander~\cite{Klotz} studied structural and spectral properties of unitary Cayley graphs. Beaudrap~\cite{Beaudrap} in the year 2010 introduced quadratic unitary Cayley graphs, which are sub graphs of unitary Cayley graphs. Quadratic unitary Cayley graphs are a circulant generalization of \textit{Paley graphs}. In fact, if $n$ is a prime with $n \equiv 1~(\mathrm{mod}~4)$, then a quadratic unitary Cayley graph on $n$ vertices is exactly a Paley graph.  Huang~\cite{Jing} determined all the eigenvalues of quadratic unitary Cayley graphs $G_n$ for $n > 1$. Bhakta and Bhattacharjya~\cite{Bhakta 1} studied periodicity and perfect state transfer on unitary Cayley graphs in discrete-time quantum walks. Additionally, Bhakta and Bhattacharjya~\cite{Bhakta 2} studied periodicity and perfect state transfer on quadratic unitary Cayley graphs in discrete-time quantum walks. This motivates us to study state transfer on unitary Cayley graphs and quadratic unitary Cayley graphs in the context of continuous-time quantum walks.  

The subsequent sections of this paper are structured as follows. In Section~\ref{sec 2}, we introduce some basic definitions and preliminary results that we use in later sections. In Section~\ref{sec 3}, we classify unitary Cayley graphs that admit PGFR and those that admit FR, in terms of the number of vertices. It turns out that a unitary Cayley graph admits FR if and only if it admits PGFR. In Section~\ref{sec 4}, we obtain all periodic quadratic unitary Cayley graphs. As a consequence, we obtain all quadratic unitary Cayley graphs admitting PST. Complete classifications of quadratic unitary Cayley graphs admitting PGST and PGFR are given in Sections~\ref{sec 5} and~\ref{sec 6}, respectively. In Section~\ref{sec 7}, we obtain all quadratic unitary Cayley graphs admitting FR. It turns out that there are infinitely many quadratic unitary Cayley graphs admitting PGFR that fail to admit PGST and FR.

\section{Basic definitions and preliminary results}\label{sec 2}
In this section, we discuss some basic definitions and results that are useful for the study of state transfer on graphs. Let $n$ be a positive integer and ${\omega}_n=\exp(\frac{2 \pi \textbf{i}}{n})$. Let ${\mathbb{Z}}_n=\{0, \ldots, n-1\}=\{a_0, \ldots, a_{n-1}\}$ such that $a_0=0$. Also, let $r$ and $a_r$ have the same parity for $1 \leq r \leq n-1$. The following theorem is useful to determine the eigenvalues and the corresponding eigenvectors of the circulant graph $\mathrm{Cay}({\mathbb{Z}}_n, S)$.
\begin{theorem}\label{spectrum circulant graph}\emph{\cite{Steinberg}}
The eigenvalues of the circulant graph $\mathrm{Cay}({\mathbb{Z}}_n, S)$ are given by $\lambda_0, \ldots, \lambda_{n-1}$, where 
$$\lambda_r=\sum_{y \in S}\cos \left(\frac{2 \pi y a_r}{n}\right)~~~\mathrm{for}~0 \leq r \leq n-1.$$
Moreover, $\textbf{v}_r$ is an eigenvector of $\mathrm{Cay}({\mathbb{Z}}_n, S)$ corresponding to the eigenvalue $\lambda_r$, where
$$\textbf{v}_r=\frac{1}{\sqrt{n}} [{\omega_n}^{(a_0-1)a_r},~~{\omega_n}^{(a_1-1)a_r},~~\cdots~~,{\omega_n}^{(a_{n-1}-1)a_r}]^T~~~\mathrm{for}~0 \leq r \leq n-1.$$
\end{theorem}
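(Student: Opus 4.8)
The plan is to exhibit an explicit orthonormal eigenbasis built from the characters of the cyclic group $\mathbb{Z}_n$ and to verify directly that the adjacency matrix acts diagonally on it. For each $r$ with $0 \le r \le n-1$, consider the vector $\mathbf{u}_r = \frac{1}{\sqrt{n}}[\omega_n^{a_0 a_r}, \ldots, \omega_n^{a_{n-1} a_r}]^T$. These are (a permutation of) the columns of the discrete Fourier matrix, hence orthonormal, and since there are $n$ of them they form a basis of $\mathbb{C}^n$. The eigenvector $\mathbf{v}_r$ in the statement is obtained from $\mathbf{u}_r$ by multiplying every coordinate by the fixed unit-modulus scalar $\omega_n^{-a_r}$, so $\mathbf{v}_r$ is a scalar multiple of $\mathbf{u}_r$; it is therefore equivalent to work with $\mathbf{u}_r$, and this overall phase affects neither the eigenvalue nor the orthonormality.

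First I would compute the $j$-th coordinate of $A\mathbf{u}_r$. Writing $A_{jk} = 1$ exactly when $a_j - a_k \in S$ (adjacency is symmetric because $S = -S$), we get $(A\mathbf{u}_r)_j = \frac{1}{\sqrt{n}}\sum_{k\,:\,a_j - a_k \in S}\omega_n^{a_k a_r}$. The key substitution is $s = a_j - a_k$: as $k$ ranges over the neighbours of $j$, this difference $s$ ranges exactly once over $S$, so $a_k = a_j - s$ and $(A\mathbf{u}_r)_j = \frac{1}{\sqrt{n}}\,\omega_n^{a_j a_r}\sum_{s\in S}\omega_n^{-s a_r}$. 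The factor $\omega_n^{a_j a_r}/\sqrt{n}$ is precisely $(\mathbf{u}_r)_j$, so $A\mathbf{u}_r = \big(\sum_{s\in S}\omega_n^{-s a_r}\big)\mathbf{u}_r$; that is, $\mathbf{u}_r$ (hence $\mathbf{v}_r$) is an eigenvector with eigenvalue $\sum_{s\in S}\omega_n^{-s a_r}$.

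To bring this eigenvalue into the stated real-cosine form I would invoke the symmetry $S = -S$ a second time: replacing $s$ by $-s$ gives $\sum_{s\in S}\omega_n^{-s a_r} = \sum_{s\in S}\omega_n^{s a_r}$, so averaging the two expressions yields $\lambda_r = \frac{1}{2}\sum_{s\in S}(\omega_n^{s a_r} + \omega_n^{-s a_r}) = \sum_{s\in S}\cos\!\big(\frac{2\pi s a_r}{n}\big)$, which is exactly the claimed formula (with $s$ in the role of $y$). Since the $\mathbf{v}_r$ constitute an orthonormal basis of eigenvectors, the $\lambda_r$ account for the entire spectrum.

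The computation is routine; the only points needing care are that the neighbour-index substitution $s = a_j - a_k$ is a genuine bijection onto $S$ — this is precisely where the Cayley-graph structure over $\mathbb{Z}_n$ enters — and that the nonstandard relabelling $a_0, \ldots, a_{n-1}$ together with the shift $a_j \mapsto a_j - 1$ in the exponent of $\mathbf{v}_r$ is harmless, the former merely permuting coordinates and the latter being a global phase. I therefore do not expect a real obstacle here; the chief purpose of stating the theorem in this particular normalization is to fix the phase convention for the transition-matrix analysis in the later sections.
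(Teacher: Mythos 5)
Your proof is correct: the paper states this result as a citation to Steinberg's book and gives no proof of its own, and your character-sum argument (checking that each Fourier vector is an eigenvector via the substitution $s=a_j-a_k$, then symmetrizing with $S=-S$ to get the cosine form) is exactly the standard derivation underlying the cited reference. You also correctly identify the two cosmetic issues in the statement --- the relabelling $r\mapsto a_r$ and the global phase $\omega_n^{-a_r}$ --- as harmless, so there is nothing to add.
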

Let $A$ be the adjacency matrix of $\mathrm{Cay}({\mathbb{Z}}_n, S)$ with the spectral decomposition $A=\sum_{r = 0}^{n-1} \lambda_r E_r$, where $E_r={\textbf{v}}_r {\textbf{v}}^\star_r$. Then the spectral decomposition of the transition matrix $H(t)$ is given by
\begin{align*}
H(t)=\sum_{r=0}^{n-1} \exp(-\textbf{i}t\lambda_r) E_r.
\end{align*}
It follows that the $ab$-th entry of $H(t)$ is 
$$H(t)_{ab}=\frac{1}{n}\sum_{r=0}^{n-1}\exp({-\textbf{i}t{\lambda_r}}){\omega_n}^{(a-b)a_r}.$$ 

Tan et al.~\cite{Tan} provided a necessary condition for the existence of PST on integral abelian Cayley graphs. 
\begin{theorem}\label{Tan}\emph{\cite{Tan}}
Let $n$ be an integer such that $n > 3$. If an integral abelian Cayley graph on $n$ vertices admits PST, then $n$ must be divisible by four.
\end{theorem}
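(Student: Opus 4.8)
The plan is to combine the rigidity of state transfer on a vertex-transitive graph with a $2$-adic parity analysis of the spectrum, where the decisive ingredient is the semisimplicity of the group algebra $\mathbb{F}_2[G']$ of the odd part $G'$ of the group. Writing $\Gamma=\mathrm{Cay}(G,S)$ with $G$ abelian and $S$ generating $G$ (the connected-Cayley convention), PST from $a$ to $b$ at time $t$ means $H(t)\mathbf{e}_a=\gamma\mathbf{e}_b$ with $|\gamma|=1$. Since $H(t)$ commutes with every translation of $G$, its entries depend only on the difference of indices, so reading off the column at $a$ forces $H(t)=\gamma P_h$, where $h=b-a$ and $P_h$ is the translation-by-$h$ permutation matrix. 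Because $H(t)$ is symmetric and $P_h^{\mathsf T}=P_{-h}$, we get $2h=0$; thus $h$ is an involution and $2\mid n$. It remains to rule out $n\equiv 2\pmod 4$ when $n>3$.

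Second, I would set up the case $n\equiv 2\pmod 4$. Then $G\cong\mathbb{Z}_2\times G'$ with $m:=|G'|$ odd, and $h=(1,0)$ is the unique involution. The characters are $\chi=(\epsilon,\psi)$ with $\epsilon\in\widehat{\mathbb{Z}_2}$ and $\psi\in\widehat{G'}$; the eigenvalues are $\lambda_{(\epsilon,\psi)}=\sum_{s\in S}\epsilon(s_1)\psi(s_2)\in\mathbb{Z}$ (using the character decomposition generalizing Theorem~\ref{spectrum circulant graph}), and $\chi(h)=\epsilon(1)=\pm1$ records the sign in the strong cospectrality of $a$ and $b$. Invoking Godsil's criterion for PST on integral graphs~\cite{When can perfect state transfer occur, State transfer on graphs}, with $g=\gcd\{\,k-\lambda_\chi\,\}$ and $k=|S|$ the degree, an eigenvalue is of $+$ type exactly when $(k-\lambda)/g$ is even and of $-$ type when it is odd. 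Writing $S=S_0\sqcup S_1$ with $S_i=\{s\in S:s_1=i\}$ and pairing $(\mathrm{triv},\psi)$ (type $+$) with $(\mathrm{nontriv},\psi)$ (type $-$), I get $\tfrac{1}{g}\big(\lambda_{(\mathrm{triv},\psi)}-\lambda_{(\mathrm{nontriv},\psi)}\big)=\tfrac{2Q(\psi)}{g}$ odd for every $\psi$, where $Q(\psi)=\sum_{s\in S_1}\psi(s_2)$.

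Third is the crux. A one-line check on $(\mathrm{nontriv},\mathrm{triv})$ gives $k-\lambda_{(\mathrm{nontriv},\mathrm{triv})}=2|S_1|$, whose quotient $2|S_1|/g$ must be odd (with $S_1\neq\varnothing$ by connectivity); since $2|S_1|$ is even, this forces $g$ to be even. Consequently every $Q(\psi)$ is an integer of a single fixed parity — odd if $v_2(g)=1$, even if $v_2(g)\ge 2$. I would then view $\beta=\sum_{s\in S_1}[\,s_2\,]\in\mathbb{Z}[G']$, so that the numbers $Q(\psi)$ are precisely the character values $\psi(\beta)$. Reducing modulo $2$ and using that $m$ is odd, Maschke's theorem makes $\mathbb{F}_2[G']$ semisimple, so its characters separate points: "all $\psi(\beta)$ even" forces $\bar\beta=0$, i.e. $S_1=\varnothing$, while "all $\psi(\beta)$ odd" forces $\bar\beta$ to be the algebra identity, i.e. $S_1=\{h\}$. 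The first case contradicts connectivity (then $S\subseteq\{0\}\times G'$ cannot generate the $\mathbb{Z}_2$ factor); in the second, $g=2$, so $k=|S_0|+1$ is odd and every $\lambda_\chi\equiv k\pmod 2$ is odd, whereupon the analogous $\mathbb{F}_2[G]$-reduction (now of $\sum_{s\in S}[s]+1$, which is nilpotent exactly when all eigenvalues are odd) forces $S_0=\varnothing$ and hence $S=\{h\}$, again disconnected. Either way we reach a contradiction, so $n\equiv 2\pmod 4$ cannot occur and $4\mid n$.

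I expect the main obstacle to be exactly this crux: controlling $v_2(g)$ well enough to conclude that all the $Q(\psi)$ share one parity, and then justifying the reduction to $\mathbb{F}_2[G']$ rigorously (that $m$-th roots of unity reduce faithfully in characteristic $2$, so that "$\psi(\beta)\bmod 2$" is genuinely the evaluation of $\bar\beta$ at a character of the semisimple algebra). Once that reduction is legitimate, semisimplicity does the real work — it is precisely the oddness of $m=n/2$ that destroys the nilpotent element an $n\equiv 2\pmod 4$ example would require. The boundary case $n=2$, namely $K_2$, is where $G'$ is trivial and the semisimplicity argument has nothing to act on, which is exactly why the hypothesis $n>3$ is imposed.
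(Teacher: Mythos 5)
The paper does not prove this statement at all: Theorem~\ref{Tan} is imported verbatim from Tan--Feng--Cao \cite{Tan} and used as a black box (its only role here is to rule out PST on $G_{2p^s}$). So there is no internal proof to compare against, and what you have written is a self-contained proof of the cited result. Your argument is essentially sound and close in spirit to the source: translation-invariance of $H(t)$ forces $H(t)=\gamma P_h$ with $2h=0$, hence $2\mid n$ and $h$ the unique involution when $n\equiv 2\pmod 4$; then Godsil's parity criterion for integral periodic graphs turns PST into the condition that $2Q(\psi)/g$ is odd for every $\psi\in\widehat{G'}$, and the semisimplicity of $\mathbb{F}_2[G']$ (equivalently, Fourier inversion with $1/m$, $m$ odd) converts the resulting uniform parity of the $Q(\psi)$ into $S_1=\varnothing$ or $S_1=\{h\}$, both of which kill connectivity. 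A few points deserve tightening rather than being genuine gaps: (i) $Q(\psi)$ is \emph{a priori} only an algebraic integer; you should note that $2Q(\psi)=\lambda_{(\mathrm{triv},\psi)}-\lambda_{(\mathrm{nontriv},\psi)}\in\mathbb{Z}$ by integrality, whence $Q(\psi)\in\mathbb{Z}$; (ii) the assignment of a $\pm$ type to each eigenvalue is well defined only because strong cospectrality (necessary for PST) forces all characters affording a given eigenvalue to agree at $h$; (iii) in the second branch you only get $v_2(g)=1$, not $g=2$, and the oddness of $k=|S_0|+1$ comes from $|S_0|$ being even (no involutions in $\{0\}\times G'$), not from the value of $g$ --- but only the evenness of $g$ is used afterwards, so nothing breaks; (iv) the connectedness convention you adopt is not cosmetic: $\mathrm{Cay}(\mathbb{Z}_6,\{3\})=3K_2$ is integral and admits PST with $4\nmid 6$, so the theorem is false without it, and your appeal to it is exactly where that hypothesis enters.
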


The following theorem is due to Pal and Bhattacharjya.
\begin{theorem}\label{Pal circulant cycle}\emph{\cite{Pal circulant}}
A cycle on $n$ vertices admits PGST if and only if $n=2^h$, where $h$ is an integer such that $h > 1$.
\end{theorem}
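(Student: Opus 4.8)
The plan is to use the standard spectral characterization of PGST: vertices $a$ and $b$ admit PGST if and only if they are \emph{strongly cospectral} (that is, $E_r\mathbf{e}_a=\sigma_r E_r\mathbf{e}_b$ with $\sigma_r\in\{+1,-1\}$ for every eigenvalue $\lambda_r$ in the eigenvalue support of $a$) and, writing $S^-$ for the set of indices with $\sigma_r=-1$, the eigenvalues satisfy the Kronecker-type parity condition: for every integer tuple $(\ell_r)$ with $\sum_r\ell_r\lambda_r=0$ and $\sum_r\ell_r=0$ one has $\sum_{r\in S^-}\ell_r$ even. Since $C_n=\mathrm{Cay}(\mathbb{Z}_n,\{1,n-1\})$ is vertex-transitive, I may assume the source vertex is $0$. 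First I would determine the strongly cospectral partners of $0$. By Theorem~\ref{spectrum circulant graph} the distinct eigenvalues are $\mu_k=2\cos(2\pi k/n)$ for $0\le k\le \lfloor n/2\rfloor$, and computing $E_k\mathbf{e}_0$ and $E_k\mathbf{e}_b$ on the (at most two-dimensional) eigenspaces shows that $E_k\mathbf{e}_0=\sigma_k E_k\mathbf{e}_b$ can hold for all $k$ only if $\cos(2\pi k(j-b)/n)=\pm\cos(2\pi k j/n)$ uniformly in $j$. This forces $n$ to be even and $b=n/2$, with $\sigma_k=(-1)^k$. Hence for odd $n$ no vertex is strongly cospectral with $0$ and PGST fails; for even $n$ the only candidate is the antipode $n/2$, and $S^-$ consists exactly of the odd indices $k$.

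With this reduction, PGST on $C_n$ amounts to the purely number-theoretic statement that every integer relation $\sum_k\ell_k\mu_k=0$ with $\sum_k\ell_k=0$ satisfies $\sum_{k\ \mathrm{odd}}\ell_k\equiv 0 \Mod{2}$. For $n=2^h$ I would analyse the additive relations among the values $2\cos(2\pi k/2^h)$ using the arithmetic of the maximal real subfield of $\mathbb{Q}(\omega_{2^h})$: the values with $k$ odd are the Galois conjugates of $2\cos(2\pi/2^h)$, while the values with $k$ even reduce to $2\cos(2\pi(k/2)/2^{h-1})$, allowing an induction on $h$. The point is that the minimal polynomial of $2\cos(2\pi/2^h)$ together with the integrality of these algebraic integers pins down a basis for the relation lattice in which each basis relation is supported, among the odd indices, on Galois orbits of even size, forcing $\sum_{k\ \mathrm{odd}}\ell_k$ to be even. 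This establishes PGST for every $n=2^h$ with $h>1$.

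For the converse I would show that if $n$ is even but not a power of $2$, a parity-violating relation exists. Writing $n=2^a m$ with $m>1$ odd and choosing an odd prime $p\mid m$, the accidental additive relations among the cosines coming from the odd part $p$ produce an integer tuple $(\ell_k)$ in the relation lattice with $\sum_{k\ \mathrm{odd}}\ell_k$ odd. This is already visible in the smallest case $n=6$, where $(\mu_0,\mu_1,\mu_2,\mu_3)=(2,1,-1,-2)$ and the tuple $(\ell_0,\ell_1,\ell_2,\ell_3)=(2,-3,1,0)$ satisfies $\sum_k\ell_k\mu_k=0$ and $\sum_k\ell_k=0$ while $\ell_1+\ell_3=-3$ is odd. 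Combined with the strong-cospectrality analysis of the first paragraph, this shows PGST fails for all even $n$ that are not powers of $2$, completing the classification.

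The main obstacle is the number-theoretic core of the last two paragraphs: verifying that the parity condition holds for \emph{all} relations when $n=2^h$, and conversely exhibiting an explicit parity-violating relation for a general even $n$ with an odd prime factor. This requires a precise description of the lattice of integer linear relations among $\{2\cos(2\pi k/n)\}$ and careful control of the parities of the coefficients under the cyclotomic Galois action, which is where essentially all of the work lies.
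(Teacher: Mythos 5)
The paper never proves this statement: it is quoted verbatim as a known result of Pal and Bhattacharjya (reference \cite{Pal circulant}) and is used later as a black box (e.g.\ to conclude that $G_4$ and $G_8$ admit PGST), so there is no in-paper proof to compare against. Judged on its own, your framework is the correct and standard one: the strong-cospectrality computation forcing $n$ even, $b=a+\frac{n}{2}$ and $\sigma_k=(-1)^k$ is right, the mod-$2$ condition on integer relations with $\sum_r\ell_r\lambda_r=0$ and $\sum_r\ell_r=0$ is the right criterion, and your $n=6$ tuple $(2,-3,1,0)$ is a valid witness that $C_6$ fails. However, the two steps that actually constitute the theorem are not carried out, and you say so yourself.

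Concretely: (a) for $n=2^h$, the assertion that the minimal polynomial of $2\cos(2\pi/2^h)$ ``pins down a basis for the relation lattice'' supported on Galois orbits of even size is not yet an argument. The odd-index eigenvalues form a single Galois orbit but are \emph{not} $\mathbb{Q}$-linearly independent --- already for $n=8$ one has $\mu_1+\mu_3=\sqrt{2}-\sqrt{2}=0$ --- so you must exhibit generators of the lattice $\{\ell:\sum_k\ell_k\mu_k=0,\ \sum_k\ell_k=0\}$ and verify the parity on each one, using the pairing $\mu_k=-\mu_{2^{h-1}-k}$ on odd indices and a genuine induction on the even indices; none of this is done. (b) For even $n$ with an odd prime factor $p$ you prove nothing beyond $n=6$. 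A uniform construction is available and would close this half: since $\sum_{j=0}^{p-1}\omega_p^{\,j}=0$, one gets $\sum_{j=0}^{p-1}\mu_{k+jn/p}=0$ for every $k$, and because $n/p$ is even all indices $k+jn/p$ share the parity of $k$ (a parity preserved when folding $k\mapsto n-k$); subtracting the relation for an even $k$ from the one for an odd $k$ yields $\sum_r\ell_r=0$ and $\sum_{r\ \mathrm{odd}}\ell_r=p$, which is odd. Until (a) and (b) are written out, what you have is a correct plan, not a proof.
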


Chan et al.~\cite{Chan FR} obtained a complete classification of $C_n$ admitting FR in terms of $n$. 
\begin{theorem}\label{Chan FR}\emph{\cite{Chan FR}}
A cycle on $n$ vertices admits FR if and only if $n=4$ or $n=6$.
\end{theorem}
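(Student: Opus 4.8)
The plan is to exploit the vertex-transitivity of $C_n=\mathrm{Cay}(\Zl_n,\{1,n-1\})$ to reduce everything to fractional revival from the vertex $0$ to some vertex $d$, and then to combine the explicit spectral data with a single number-theoretic reduction. By Theorem~\ref{spectrum circulant graph} the distinct eigenvalues of $C_n$ are $\lambda_r=2\cos\!\left(\tfrac{2\pi r}{n}\right)$ for $0\le r\le \lfloor n/2\rfloor$, each non-extreme one having multiplicity two from the indices $r$ and $n-r$, and vertex $0$ has full eigenvalue support since $(\textbf{v}_r)_0\ne 0$ for all $r$. Writing the FR condition $H(t)\textbf{e}_0=\alpha\textbf{e}_0+\beta\textbf{e}_d$ in the spectral basis and projecting onto each eigenspace gives $e^{-\mathbf{i}t\lambda_r}E_r\textbf{e}_0=\alpha E_r\textbf{e}_0+\beta E_r\textbf{e}_d$. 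Because $0$ and $d$ are cospectral (vertex-transitivity), this forces $E_r\textbf{e}_d=\sigma_r E_r\textbf{e}_0$ with $\sigma_r\in\{+1,-1\}$; that is, $0$ and $d$ must be strongly cospectral.

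Next I would pin down the partner $d$ using the explicit eigenvectors. The $c$-th entry of $E_{\lambda_r}\textbf{e}_0$ is proportional to $\cos(2\pi cr/n)$ and that of $E_{\lambda_r}\textbf{e}_d$ to $\cos(2\pi(c-d)r/n)$; evaluating at $c=d$ yields $\cos(2\pi dr/n)=\sigma_r=\pm1$, i.e. $n\mid 2dr$ for every $r$ in the support. Taking $r=1$ gives $n\mid 2d$, so $d=n/2$ and $n$ must be even. The signs then become $\sigma_r=\cos(\pi r)=(-1)^r$, splitting the eigenvalues into an even-index class $\Sigma^+$ and an odd-index class $\Sigma^-$. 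Substituting back, FR holds if and only if there is $t>0$ with $e^{-\mathbf{i}t\lambda_r}=\alpha+\beta=:\mu_+$ for all even $r$ and $e^{-\mathbf{i}t\lambda_r}=\alpha-\beta=:\mu_-$ for all odd $r$, where $|\mu_\pm|=1$ and $\mu_+\ne\mu_-$ (equivalently $\beta\ne0$). Thus $e^{-\mathbf{i}t\lambda_r}$ must be constant on each parity class, which happens exactly when all eigenvalue differences within a single class lie in $\tfrac{2\pi}{t}\Zl$; equivalently, all within-class differences are pairwise commensurable.

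The hard part will be ruling out all even $n\ge 8$ at once, and here I expect the crux to be isolating one clean ratio. For such $n$ each class contains two distinct eigenvalues, so commensurability forces the ratio of the even-class difference $\lambda_0-\lambda_2$ to the odd-class difference $\lambda_1-\lambda_3$ to be rational. Writing $\phi=2\pi/n$ and using product-to-sum identities,
\begin{align*}
\frac{\lambda_0-\lambda_2}{\lambda_1-\lambda_3}=\frac{2-2\cos 2\phi}{2\cos\phi-2\cos 3\phi}=\frac{4\sin^2\phi}{8\sin^2\phi\cos\phi}=\frac{1}{2\cos(2\pi/n)}.
\end{align*}
By Niven's theorem, $\cos(2\pi/n)$ is rational only for $n\in\{1,2,3,4,6\}$, so this ratio is irrational for every even $n\ge 8$ (the two differences being nonzero there), contradicting commensurability. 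Hence no even $n\ge 8$ admits FR.

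Finally I would dispose of the small cases directly. For $n=4$ the odd class is a single eigenvalue, so the ratio test does not apply; instead one checks $H(\tfrac{\pi}{2})\textbf{e}_0=-\textbf{e}_2$, which is (antipodal) perfect state transfer and in particular FR. For $n=6$, at $t=\tfrac{2\pi}{3}$ one finds the two phases $\mu_+=e^{2\pi\mathbf{i}/3}$ and $\mu_-=e^{-2\pi\mathbf{i}/3}$, giving $\alpha=-\tfrac12$ and $\beta=\tfrac{\sqrt3}{2}\mathbf{i}\ne0$, i.e. genuine FR. Combining with the odd-$n$ case (no strong cospectrality, since no antipode exists) yields that $C_n$ admits FR precisely for $n=4$ and $n=6$. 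The strong-cospectrality reduction and the two verifications are routine; the essential point is the reduction of infinitely many even $n$ to Niven's theorem through the single ratio $\tfrac{1}{2\cos(2\pi/n)}$.
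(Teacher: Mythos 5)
Your argument is correct, but note that the paper does not prove this statement at all: Theorem~\ref{Chan FR} is quoted from Chan--Coutinho--Tamon--Vinet--Zhan \cite{Chan FR} as a known result, so there is no in-paper proof to match. What you have written is essentially a self-contained re-derivation, for the special case of cycles, of the machinery the paper imports as Theorem~\ref{J. Wang} and Corollary~\ref{non existence of FR}: your strong-cospectrality projection argument recovers the necessary conditions that $n$ be even, that the partner vertex be the antipode $n/2$, and that $e^{-\mathbf{i}t\lambda_r}$ be constant on each parity class, which is exactly the condition $\frac{t}{2\pi}(\lambda_x-\lambda_y)\in\Zl$ for all same-parity pairs. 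The genuinely new ingredient you supply is the reduction of all even $n\ge 8$ to the single ratio $\frac{\lambda_0-\lambda_2}{\lambda_1-\lambda_3}=\frac{1}{2\cos(2\pi/n)}$ together with Niven's theorem; this is a clean and correct way to kill the infinite family (the cross-class ratio is legitimate here because the same $t$ must serve both classes, which is also why the pairs $(0,2)$ and $(1,3)$ both lie in the set $N$ of Corollary~\ref{non existence of FR}). Your direct verifications for $n=4$ (PST at $t=\pi/2$, hence FR under the paper's definition) and $n=6$ (phases $e^{\pm 2\pi\mathbf{i}/3}$ at $t=2\pi/3$) check out, and the odd case is correctly excluded by the antipode requirement. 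The only cosmetic caution is that you should work with the real eigenspace idempotents $E_\theta=\sum_{\lambda_r=\theta}\textbf{v}_r\textbf{v}_r^\star$ rather than the individual rank-one complex projections when asserting $\sigma_r=\pm 1$, which you implicitly do.
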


Wang et al.~\cite{J. Wang} proved the following theorem, which gives a necessary and sufficient condition for the existence of FR on circulant graphs.
\begin{theorem}\label{J. Wang}\emph{\cite{J. Wang}}
Let $a, b$ be two vertices of the circulant graph $\mathrm{Cay}({\mathbb{Z}}_n, S)$ such that $a < b$. Also, let $N$ denote the set $\{(x, y) \colon x, y \in {\mathbb{Z}}_n, x > y~\mathrm{and}~x-y~\mathrm{is~even}\}$. Then $\mathrm{Cay}({\mathbb{Z}}_n, S)$ admits FR from $a$ to $b$ if and only if the following two conditions hold. 
\begin{enumerate}
\item[(i)] $n$ is an even number and $b=a+\frac{n}{2}$.

\item[(ii)]  There exists a positive real number $t$ such that 
$$\frac{t}{2 \pi}(\lambda_x-\lambda_y) \in \mathbb{Z}~~~~\mathrm{for~all}~(x, y) \in N.$$ 
\end{enumerate}
\end{theorem}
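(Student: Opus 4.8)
The plan is to work directly with the explicit entrywise formula for the transition matrix of a circulant graph and to convert the existence of fractional revival into a condition on the phases $\mu_r:=\exp(-\mathbf{i}t\lambda_r)$. Since the translations $v\mapsto v+c$ are automorphisms of $\mathrm{Cay}(\mathbb{Z}_n,S)$ that commute with $A$, and hence with $H(t)$, I may assume $a=0$ after a shift. Setting $G(v):=H(t)_{v,0}=\frac{1}{n}\sum_{r=0}^{n-1}\mu_r\,\omega_n^{v a_r}$ for $v\in\mathbb{Z}_n$, fractional revival from $0$ to $b$ says exactly that the $0$-th column of $H(t)$ is supported on $\{0,b\}$; that is, $G(v)=0$ for every $v\notin\{0,b\}$, while $G(0)=\alpha$ and $G(b)=\beta\neq0$.

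First I would establish condition~(i). Because $A$ is a real symmetric circulant and $S=-S$, the matrix $H(t)$ is symmetric, so $G(v)=H(t)_{v,0}=H(t)_{0,v}=G(-v)=G(n-v)$ for all $v$. If $G$ is supported on $\{0,b\}$ with $1\le b\le n-1$ and $G(b)=\beta\neq0$, then $G(n-b)=G(b)\neq0$ forces $n-b\in\{0,b\}$; the possibility $n-b=0$ is excluded since $b\le n-1$, so $n=2b$. Hence $n$ is even and $b=\tfrac{n}{2}$, which after undoing the shift is $b=a+\tfrac{n}{2}$; this also shows that $a+\tfrac{n}{2}$ is the only possible target vertex.

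Next I would reduce the support condition to~(ii) by Fourier inversion. As $r\mapsto a_r$ is a bijection of $\mathbb{Z}_n$ and $r\equiv a_r\Mod{2}$, we have $(-1)^r=(-1)^{a_r}=\omega_n^{(n/2)a_r}$; summing the geometric series gives that $\sum_{r}\omega_n^{v a_r}$ equals $n$ when $v\equiv0\Mod{n}$ and $0$ otherwise, while $\sum_{r}(-1)^r\omega_n^{v a_r}$ equals $n$ when $v\equiv\tfrac{n}{2}\Mod{n}$ and $0$ otherwise. Therefore $G$ is supported on $\{0,\tfrac{n}{2}\}$ if and only if $\mu_r$ depends only on the parity of $r$, say $\mu_r=p+q(-1)^r$, in which case $\alpha=p$ and $\beta=q$. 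The statement that $\mu_x=\mu_y$ whenever $x$ and $y$ have the same parity is precisely $\exp(-\mathbf{i}t(\lambda_x-\lambda_y))=1$, i.e. $\tfrac{t}{2\pi}(\lambda_x-\lambda_y)\in\mathbb{Z}$ for all $(x,y)\in N$, which is condition~(ii); the normalization $|\alpha|^2+|\beta|^2=1$ then follows automatically from $|\mu_r|=1$.

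With this dictionary the implication from fractional revival to~(i) and~(ii) is immediate. The step I expect to be the main obstacle lies in the converse, in the non-degeneracy $\beta\neq0$ built into the definition of fractional revival: one needs the two parity-classes to carry genuinely different phases, $\mu_{\mathrm{even}}\neq\mu_{\mathrm{odd}}$, whereas~(ii) by itself only provides a time $t$ at which the even-index differences are integral. I would therefore have to produce such a $t$ at which some odd-index difference $\lambda_x-\lambda_y$ fails to be an integer multiple of $\tfrac{2\pi}{t}$, since otherwise $H(t)=\gamma I$ is mere periodicity rather than genuine revival. Carrying this out amounts to comparing the additive subgroup of $\mathbb{R}$ generated by the even-index eigenvalue differences with the one generated by all differences, and choosing $t$ so as to separate them; controlling this comparison, and excluding the degenerate case where the two subgroups coincide, is where the real work of the argument sits.
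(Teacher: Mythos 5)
The paper does not actually prove this statement: Theorem~\ref{J. Wang} is imported verbatim from the cited reference and used as a black box, so there is no in-paper argument to compare you against. Judged on its own terms, your reduction is correct and clean: translating so that $a=0$, using the symmetry $G(v)=G(n-v)$ of the circulant $H(t)$ to force $b=\tfrac n2$ (condition~(i)), and the Fourier-inversion dictionary identifying ``the $0$-th column of $H(t)$ is supported on $\{0,\tfrac n2\}$'' with ``$\mu_r=\exp(-\mathbf{i}t\lambda_r)$ depends only on the parity of $r$'', which is exactly condition~(ii). This settles the forward implication completely and reduces the converse to a single question.

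That remaining question --- the nondegeneracy $\beta\neq 0$ --- is a genuine gap, not a technicality, and you are right to flag it as the real work. Conditions~(i) and~(ii) produce a time $t$ at which $\mu_r=p+q(-1)^r$, but nothing yet guarantees $q\neq 0$: if every difference $\lambda_x-\lambda_y$ with $x\not\equiv y\pmod 2$ also lies in $\tfrac{2\pi}{t}\mathbb{Z}$ for every admissible $t$, one only ever obtains $H(t)=\gamma I$. Concretely, $\mathrm{Cay}(\mathbb{Z}_6,\{2,4\})$ has $\lambda_0=\lambda_3=2$ and $\lambda_r=-1$ otherwise, so (i) and (ii) hold with $t=\tfrac{2\pi}{3}$, yet the graph is two disjoint triangles and admits no FR from $0$ to $3$; the statement as transcribed therefore needs either a connectivity hypothesis or an argument that the additive subgroup of $\mathbb{R}$ generated by the same-parity differences is strictly smaller than the one generated by all differences, so that some admissible $t$ separates the two parity classes. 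Your proposal correctly locates this step but does not supply it, so as written it proves the forward direction and only part of the converse.
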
 
The following corollary is an immediate consequence of Theorem~\ref{J. Wang}.
\begin{corollary}\label{non existence of FR}
Let $n$ be an even positive integer such that $\mathrm{Cay}({\mathbb{Z}}_n, S)$ admits FR. Then 
$$\frac{\lambda_{x_1}-\lambda_{y_1}}{\lambda_{x_2}-\lambda_{y_2}} \in \mathbb{Q},$$ 
for all $(x_1, y_1), (x_2, y_2) \in N$ such that $\lambda_{x_2} \neq \lambda_{y_2}$.   
\end{corollary} 

The authors in~\cite{Kalita} obtained a necessary and sufficient condition for the existence of PGFR on circulant graphs.
\begin{theorem}\label{PGFR circulant}\emph{\cite{Kalita}}
Let $a, b$ be two vertices of the circulant graph $\mathrm{Cay}({\mathbb{Z}}_n, S)$ such that $a < b$. Then $\mathrm{Cay}({\mathbb{Z}}_n, S)$ admits PGFR from $a$ to $b$ if and only if the following two conditions hold. 
\begin{enumerate}
\item[(i)] $n$ is an even number and $b=a+\frac{n}{2}$.

\item[(ii)] For arbitrary integers $\ell_1, \ldots, \ell_{n-1}$, the relation 
$$\sum_{r=1}^{n-1}{\ell_r}(\lambda_r-\lambda_0) = 0$$
implies 
$$\sum_{r ~ \mathrm{odd}~}\ell_{r} \neq \pm 1.$$
\end{enumerate}
\end{theorem}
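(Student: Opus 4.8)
The plan is to exploit vertex-transitivity: since $\mathrm{Cay}(\mathbb{Z}_n,S)$ is a circulant graph, I may assume $a=0$ and study PGFR from $0$ to $d:=b$. Using Theorem~\ref{spectrum circulant graph} and the spectral decomposition, the $(c,0)$ entry of the transition matrix is $H(t)_{c0}=\frac1n\sum_{r=0}^{n-1}\exp(-\mathbf{i}t\lambda_r)\omega_n^{c a_r}$, which depends on $c$ alone. To prove necessity of (i), I would pass to a subsequence along which the unit-modulus scalars $\exp(-\mathbf{i}t_k\lambda_r)$ converge (the torus $\mathbb{T}^{n}$ is compact), say to $\phi_r$. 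The defining limit $\lim_k H(t_k)\mathbf{e}_0=\alpha\mathbf{e}_0+\beta\mathbf{e}_d$ then forces, after an inverse discrete Fourier transform in the orthogonal system $\{\omega_n^{c a_r}\}_c$, the relation $\phi_s=\alpha+\beta\,\omega_n^{-d a_s}$ for every $s$. Since $|\phi_s|=1$ and since the cosine formula makes each $\lambda_r$ invariant under $a_r\mapsto -a_r$, one has $\phi_s=\phi_{s'}$ whenever $a_{s'}\equiv -a_s\ (\mathrm{mod}\ n)$; comparing the two expressions gives $\beta(\omega_n^{d a_s}-\omega_n^{-d a_s})=0$, and since $\beta\neq0$ this forces $\omega_n^{2d a_s}=1$ for all $s$, hence $2d\equiv0\ (\mathrm{mod}\ n)$. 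As $d\neq0$, this means $n$ is even and $d=\tfrac n2$, which is condition (i).

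Assuming (i), I would reduce PGFR to a reachability problem on a torus. With $d=\tfrac n2$ one has $\omega_n^{(n/2)a_s}=(-1)^{a_s}$, and the parity convention $a_s\equiv s\ (\mathrm{mod}\ 2)$ makes this $(-1)^s$; a short computation splitting the Fourier sum into even and odd indices shows that $H(t)\mathbf{e}_0$ is supported on $\{0,\tfrac n2\}$ \emph{precisely} when $\exp(-\mathbf{i}t\lambda_r)$ is constant on even $r$ and constant on odd $r$, with the two constants $\mu,\nu$ giving $\alpha=\tfrac{\mu+\nu}2$ and $\beta=\tfrac{\mu-\nu}2$. Factoring out the global phase $\exp(-\mathbf{i}t\lambda_0)$ and writing $\theta_r:=\lambda_r-\lambda_0$ (so $\theta_0=0$ and index $0$ is even), PGFR from $0$ to $\tfrac n2$ becomes equivalent to the existence of some $q$ on the unit circle with $q\neq1$ such that the point $\xi(q)\in\mathbb{T}^{n-1}$ defined by $\xi_r=1$ for even $r$ and $\xi_r=q$ for odd $r$ lies in the closure of the one-parameter orbit $\{(\exp(-\mathbf{i}t\theta_r))_{r=1}^{n-1}:t\in\mathbb{R}\}$; here $q\neq1$ is exactly what secures $\beta\neq0$.

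The final step is to invoke Kronecker's theorem (the description of the closure of a one-parameter subgroup of a torus), which identifies that closure with $\{(z_r):\prod_r z_r^{\ell_r}=1\text{ whenever }\sum_r\ell_r\theta_r=0,\ \ell_r\in\mathbb{Z}\}$. For $\xi(q)$ the membership condition collapses to $q^{\sum_{r\ \mathrm{odd}}\ell_r}=1$ for every integer relation $\sum_r\ell_r\theta_r=0$. Letting $L=\{\sum_{r\ \mathrm{odd}}\ell_r:(\ell_r)\text{ is such a relation}\}$, a subgroup $m\mathbb{Z}$ of $\mathbb{Z}$, a single admissible $q\neq1$ exists iff $m\neq1$: when $m=0$ any $q\neq1$ works, when $m\geq2$ a nontrivial $m$-th root of unity works, and when $m=1$ one is forced to $q=1$. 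Hence PGFR holds iff no integer relation $\sum_r\ell_r\theta_r=0$ satisfies $\sum_{r\ \mathrm{odd}}\ell_r=\pm1$, which is condition (ii). I expect the principal obstacle to be the middle reduction—correctly tracking the varying global phase $\exp(-\mathbf{i}t\lambda_0)$ so that the target genuinely becomes "constant on parity classes," and recognizing that one needs a \emph{single} $q$ defeating all relations simultaneously. A useful sanity check is that if an even-indexed and an odd-indexed eigenvalue coincide, the relation $\theta_s=\theta_{s'}$ already yields $\sum_{r\ \mathrm{odd}}\ell_r=\pm1$, matching the expectation that such a coincidence destroys $\beta\neq0$.
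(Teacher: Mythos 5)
This theorem is imported verbatim from \cite{Kalita} and the paper offers no proof of it, so there is no internal argument to compare against; your proof is nevertheless correct and follows exactly the route one expects from that source and the surrounding PGST/PGFR literature (Fourier inversion of the limiting column to force $b=a+\frac{n}{2}$, reduction to transition phases constant on the two parity classes with ratio $q\neq 1$, and the Kronecker-type description of the closure of the one-parameter orbit, with the image subgroup $L=m\mathbb{Z}$ giving condition (ii) as ``$m\neq 1$''). The only phrasing to tighten in a written version is the ``precisely when'' in the middle reduction, which must be applied to the subsequential limits $\phi_r$ rather than to $H(t)$ at a fixed time --- as your compactness setup already implicitly arranges.
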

Theorem~\ref{PGFR circulant} is our main tool to investigate the existence of PGFR on the unitary Cayley graph $X_n$ and the quadratic unitary Cayley graph $G_n$.

Let $a$ be an integer and $p$ be an odd prime such that $\gcd(a,p) = 1$. Then $a$ is called a \textit{quadratic residue} (resp. \textit{quadratic non-residue}) of $p$ if the congruence $x^2 \equiv a~(\mathrm{mod}~p)$ has a solution (resp. no solution). The \textit{Legendre symbol}, denoted $(a/p)$, is defined as
\[(a/p)= \left\{ \begin{array}{rl} 1 & \textrm{ if }~a~\mathrm{is~a~quadratic~residue~of}~p\\
 -1 & \textrm{ if }~a~\textrm{is a quadratic non-residue of}~p.\\
\end{array}\right. \]
Now we state some useful number-theoretic results.
\begin{lema}\emph{\cite{Burton}}
If $p$ is an odd prime, then there are precisely $\frac{p-1}{2}$ quadratic residues and $\frac{p-1}{2}$ quadratic non-residues of $p$.
\end{lema}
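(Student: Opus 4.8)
The plan is to count the image of the squaring map on the nonzero residues. By definition, the quadratic residues of $p$ are exactly the nonzero values attained by $x^2 \pmod p$ as $x$ runs over $\{1, 2, \ldots, p-1\}$, so it suffices to show that this set of values has cardinality $\frac{p-1}{2}$; the remaining $(p-1) - \frac{p-1}{2} = \frac{p-1}{2}$ nonzero residues are then precisely the quadratic non-residues. Thus the whole statement reduces to pinning down the size of the image of $x \mapsto x^2$.

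First I would isolate the candidate residues $1^2, 2^2, \ldots, \left(\frac{p-1}{2}\right)^2$ and argue that they are pairwise incongruent modulo $p$. The key algebraic fact is that, since $p$ is prime, $i^2 \equiv j^2 \pmod p$ forces $p \mid (i-j)(i+j)$, hence $i \equiv j \pmod p$ or $i \equiv -j \pmod p$. For $1 \le i, j \le \frac{p-1}{2}$ the first alternative gives $i = j$, while the second is impossible because $2 \le i + j \le p - 1$, so $p \nmid (i+j)$. This yields $\frac{p-1}{2}$ distinct quadratic residues.

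Next I would show that every quadratic residue already appears in this list. If $a \equiv k^2 \pmod p$ with $\frac{p-1}{2} < k \le p-1$, then $p - k$ lies in $\{1, \ldots, \frac{p-1}{2}\}$ and $k^2 \equiv (p-k)^2 \pmod p$, so $a$ coincides with one of the listed squares. Hence the quadratic residues are exactly these $\frac{p-1}{2}$ values, and the count of non-residues follows immediately by subtraction from $p-1$.

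There is no serious obstacle here; the only point demanding care is the odd-prime hypothesis, which is what guarantees $i \not\equiv -i \pmod p$ for $i \not\equiv 0$ (equivalently, that $2$ is invertible modulo $p$), so that the squaring map is genuinely two-to-one rather than collapsing such pairs. This is precisely where the assumption that $p$ is odd is used, and it is the hinge that makes the $+$ and $-$ solutions distinct in the step above.
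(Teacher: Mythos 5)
Your proof is correct and is exactly the standard argument (the one given in Burton, which the paper simply cites without reproducing): the squares $1^2,\ldots,\left(\frac{p-1}{2}\right)^2$ are pairwise incongruent modulo $p$ and exhaust the quadratic residues, so the non-residues are counted by subtraction. Nothing further is needed.
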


\begin{lema}\emph{\cite{Burton}}
Let $p$ be an odd prime. Then the quadratic congruence $x^2 \equiv -1~(\mathrm{mod}~p)$ has a solution if and only if $p \equiv 1~(\mathrm{mod}~4)$.
\end{lema}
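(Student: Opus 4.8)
The plan is to reduce the congruence $x^2 \equiv -1 \pmod{p}$ to an explicit power computation via Euler's criterion, which asserts that for $\gcd(a,p)=1$ one has $(a/p) \equiv a^{(p-1)/2} \pmod{p}$, so that $a$ is a quadratic residue of $p$ precisely when $a^{(p-1)/2} \equiv 1 \pmod{p}$. Specializing to $a=-1$, the question becomes whether $(-1)^{(p-1)/2} \equiv 1 \pmod{p}$. Since $p$ is an odd prime, $-1 \not\equiv 1 \pmod{p}$, so this congruence holds if and only if the exponent $(p-1)/2$ is even, which is exactly the condition $p \equiv 1 \pmod{4}$. This disposes of both implications simultaneously.

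If I wanted a fully self-contained argument avoiding Euler's criterion, I would treat the two directions separately. For the forward direction, assuming $p \equiv 1 \pmod{4}$, I would invoke Wilson's theorem $(p-1)! \equiv -1 \pmod{p}$ and regroup the product by pairing each factor $k$ with $p-k$; this rewrites $(p-1)!$ as $(-1)^{(p-1)/2}\big(((p-1)/2)!\big)^2$, and since $(p-1)/2$ is even the sign is $+1$, exhibiting $x = ((p-1)/2)!$ as an explicit square root of $-1$. For the converse I would argue by contradiction: if $x^2 \equiv -1 \pmod{p}$ while $p \equiv 3 \pmod{4}$, then raising to the $(p-1)/2$ power yields $x^{p-1} \equiv (-1)^{(p-1)/2} = -1$, contradicting Fermat's little theorem $x^{p-1} \equiv 1 \pmod{p}$. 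Equivalently, such an $x$ would have order $4$ in the multiplicative group modulo $p$, forcing $4 \mid p-1$ by Lagrange's theorem.

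The main obstacle is not the combinatorial bookkeeping but securing the underlying tool, namely Euler's criterion (or, in the self-contained version, Wilson's theorem together with Fermat's little theorem). As these are standard results available in the number-theoretic references already cited (e.g.\ Burton), I would simply quote them, so that the only genuine work reduces to the sign evaluation $(-1)^{(p-1)/2}$ and the elementary parity analysis of $(p-1)/2$, both of which are routine.
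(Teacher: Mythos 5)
Your argument is correct. The paper does not prove this lemma at all --- it is quoted verbatim from Burton's textbook as a known number-theoretic fact --- so there is nothing internal to compare against; your Euler's-criterion computation of $(-1)^{(p-1)/2}$, together with the parity analysis of $(p-1)/2$, is exactly the standard proof one finds in that reference, and your self-contained fallback via Wilson's theorem (for existence when $p \equiv 1 \pmod 4$) and Fermat's little theorem (for nonexistence when $p \equiv 3 \pmod 4$) is likewise sound.
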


\begin{lema}\emph{\cite{Burton}}
Let $p$ be an odd prime and $a$ be an integer with $\gcd(a,p) = 1$. The quadratic congruence $x^2 \equiv a~(\mathrm{mod}~p^s)~\mathrm{for}~s \in \mathbb{N}$ 
has a solution if and only if $(a/p) = 1$.    
\end{lema}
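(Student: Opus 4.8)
The plan is to prove both implications, with the forward direction being immediate and the reverse direction handled by a Hensel-type lifting argument via induction on $s$. For the forward direction, suppose $x^2 \equiv a \pmod{p^s}$ has a solution $x_0$. Reducing this congruence modulo $p$ gives $x_0^2 \equiv a \pmod{p}$, so $a$ is a quadratic residue of $p$, and hence $(a/p)=1$ by the definition of the Legendre symbol. This disposes of the ``only if'' part with no further work.

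For the reverse direction, assume $(a/p)=1$, so that $x^2 \equiv a \pmod{p}$ is solvable; this is precisely the base case $s=1$. I would then induct on $s$. Suppose $x_s^2 \equiv a \pmod{p^s}$, and write $x_s^2 = a + k p^s$ for some integer $k$. I would seek a lifted solution of the form $x_{s+1} = x_s + t p^s$. Expanding yields
$$x_{s+1}^2 = x_s^2 + 2 t x_s p^s + t^2 p^{2s} = a + (k + 2 t x_s)\,p^s + t^2 p^{2s},$$
and since $2s \geq s+1$ for every $s \geq 1$, the final term vanishes modulo $p^{s+1}$. Thus $x_{s+1}^2 \equiv a \pmod{p^{s+1}}$ reduces to the single linear congruence $k + 2 t x_s \equiv 0 \pmod{p}$ in the unknown $t$.

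The one point requiring care is the Hensel nondegeneracy condition, namely that $2 x_s$ be invertible modulo $p$, since this is what guarantees the linear congruence above is solvable for $t$. Here I would invoke the hypotheses directly: because $p$ is odd, $2$ is a unit modulo $p$; and because $\gcd(a,p)=1$, the relation $x_s^2 \equiv a \not\equiv 0 \pmod{p}$ forces $\gcd(x_s,p)=1$, so $x_s$ is also a unit. Consequently $2x_s$ is invertible, and the choice $t \equiv -k\,(2 x_s)^{-1} \pmod{p}$ produces a solution modulo $p^{s+1}$, completing the induction. The main (and essentially only) obstacle is verifying this nonvanishing of the ``derivative'' $2x_s$ modulo $p$, which is exactly where the assumptions that $p$ is odd and $\gcd(a,p)=1$ are used.
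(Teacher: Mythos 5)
Your proof is correct. The paper does not prove this lemma at all---it is quoted from Burton's textbook as a known result---and your Hensel-lifting argument is exactly the standard proof of that result: the forward direction by reduction modulo $p$, and the reverse direction by induction on $s$ with the lift $x_{s+1}=x_s+tp^s$, where the invertibility of $2x_s$ modulo $p$ (using that $p$ is odd and $\gcd(a,p)=1$) is correctly identified as the only point needing care.
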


\begin{lema}\label{lemma 1.2}\emph{\cite{Jing}}
Let $k$ be a positive integer such that $k > 1$. Then $-1 \notin Q_{2^k}$.
\end{lema}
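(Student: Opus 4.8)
The plan is to reduce the claim to a congruence modulo $4$. Unwinding the definitions, $U(2^k)$ consists precisely of the odd residues in $\{1, \ldots, 2^k-1\}$, so as a subset of ${\mathbb Z}_{2^k}$ we have $Q_{2^k} = \{\, u^2 \bmod 2^k \colon u \text{ odd}\,\}$, and the symbol $-1$ denotes the residue $2^k-1$. Hence $-1 \notin Q_{2^k}$ is equivalent to the assertion that the congruence $u^2 \equiv -1 \pmod{2^k}$ has no solution with $u$ odd, which is the form I would work with.

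First I would record the elementary fact that every odd integer $u$ satisfies $u \equiv 1$ or $u \equiv 3 \pmod 4$, and that in either case $u^2 \equiv 1 \pmod 4$. Next, since $k > 1$ we have $4 \mid 2^k$, so any hypothetical solution of $u^2 \equiv -1 \pmod{2^k}$ would, upon reduction modulo $4$, yield $u^2 \equiv -1 \equiv 3 \pmod 4$. This contradicts $u^2 \equiv 1 \pmod 4$, and the lemma follows at once.

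There is essentially no substantive obstacle here; the only point requiring a little care is bookkeeping at the level of definitions, namely confirming that every element of $Q_{2^k}$ genuinely arises from an odd $u$ (immediate, since the units modulo $2^k$ are exactly the odd residues) and interpreting $-1$ correctly as $2^k-1$ in ${\mathbb Z}_{2^k}$. Once these identifications are in place, the reduction modulo $4$ closes the argument in a single line, and no induction on $k$ or finer $2$-adic analysis is needed.
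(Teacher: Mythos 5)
Your argument is correct: since $U(2^k)$ consists exactly of the odd residues and every odd $u$ satisfies $u^2 \equiv 1 \pmod 4$, while $-1 \equiv 3 \pmod 4$ once $4 \mid 2^k$, the reduction modulo $4$ settles the claim. The paper itself offers no proof of this lemma --- it is quoted from the reference of Huang --- and your one-line congruence argument is the standard and complete justification, so there is nothing to compare beyond noting that you have supplied a proof where the paper supplies only a citation.
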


For a prime $p$ and a positive integer $k$, let $\chi_a$ be a character of ${\mathbb{Z}}_{p^k}$ indexed by $a \in {\mathbb{Z}}_{p^k}$. The \textit{character sum}, denoted $\chi_a(Q_{p^k})$, is defined as 
$$\chi_a(Q_{p^k}) = \sum_{b \in Q_{p^k}} \chi_a(b).$$
Huang~\cite{Jing} provided explicit formulas to compute $\chi_a(Q_{p^k})$, which we include in the form of the next theorem.
\begin{theorem}\label{theorem character prime power}\emph{\cite{Jing}}
Let $p$ be a prime, $k$ be a positive integer and $a \in {\mathbb{Z}}_{p^k}$.
\begin{enumerate}
\item [(i)] If $p=2$, then $\chi_a(Q_2) = \cos(a \pi)$ and $\chi_a(Q_4) = \exp(-\frac{a \pi \textbf{i}}{2})$,  

and for $k > 2$,
\[\chi_a(Q_{2^k})=\left\{ \begin{array}{ll} 
2^{k-3}\exp \left(-\frac{a \pi \textbf{i}}{2^{k-1}} \right) & \textrm{ if } a \in \{2^{k-3}, 3.2^{k-3},5.2^{k-3},7.2^{k-3}\}\\
2^{k-3}(-\textbf{i})^{\frac{a}{2^{k-2}}}& \textrm{ if } a \in \{0, 2^{k-2}, 2^{k-1}, 3.2^{k-2}\}\\
0 & \textrm{ otherwise. } \end{array}\right. \]

\item [(ii)] If $p$ is odd, then for all $k$ 
\[\chi_a(Q_{p^k})=\left\{ \begin{array}{ll} 
\frac{p^{k-1}(p-1)}{2} & \textrm{ if } a=0\\
\frac{1}{2}p^{k-1}\left[\sqrt{p} \left(\frac{a}{p^{k-1}}/p \right)\frac{1+{\textbf{i}}^p}{1+\textbf{i}}-1 \right]   & \textrm{ if } a \in p^{k-1}{\mathbb{Z}}_p \setminus \{0\}\\
0 & \textrm{ otherwise. } \end{array}\right. \]
\end{enumerate}
\end{theorem}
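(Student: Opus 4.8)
The plan is to reduce each character sum $\chi_a(Q_{p^k})$ to classical quadratic Gauss sums and Ramanujan sums, handling the odd-prime and $p=2$ cases separately, since the unit group $(\mathbb{Z}/p^k\mathbb{Z})^\times$ is cyclic only when $p$ is odd. Throughout I take $\chi_a(b)=\exp\!\left(-\tfrac{2\pi\mathbf{i}ab}{p^k}\right)$; the base cases $\chi_a(Q_2)$ and $\chi_a(Q_4)$ are then immediate, because $U(2)=\{1\}$ and $U(4)=\{1,3\}$ force $Q_2=Q_4=\{1\}$, so each is the single term $\chi_a(1)$.

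For odd $p$ the key observation is that a unit modulo $p^k$ is a square if and only if it is a square modulo $p$, so the quadratic character $\psi$ on $(\mathbb{Z}/p^k\mathbb{Z})^\times$ factors as $\psi(b)=(b/p)$ and the indicator of $Q_{p^k}$ equals $\tfrac{1}{2}\bigl(1+\psi(b)\bigr)$. Substituting this gives
\begin{align*}
\chi_a(Q_{p^k})=\tfrac{1}{2}\sum_{b\in U(p^k)}\chi_a(b)+\tfrac{1}{2}\sum_{b\in U(p^k)}\psi(b)\,\chi_a(b).
\end{align*}
The first sum is the Ramanujan sum, which equals $\phi(p^k)$ when $a=0$, equals $-p^{k-1}$ when $p^{k-1}\,\|\,a$, and vanishes otherwise. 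For the second sum I would group $b$ by its residue $b_0$ modulo $p$; since $\psi(b)$ depends only on $b_0$, the inner lift contributes a geometric series that forces $p^{k-1}\mid a$ and otherwise kills the term. When $a=p^{k-1}c$ with $\gcd(c,p)=1$, what survives is $p^{k-1}$ times the classical Gauss sum $\sum_{b_0}(b_0/p)\exp\!\left(-\tfrac{2\pi\mathbf{i}cb_0}{p}\right)=(c/p)\sqrt{p}\,\tfrac{1+\mathbf{i}^p}{1+\mathbf{i}}$, the stated factor being exactly the value of the quadratic Gauss sum under this sign convention. Assembling the three regimes of $a$ reproduces the three cases of part~(ii), and the case $k=1$ falls out as the usual quadratic Gauss sum.

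For $p=2$ and $k>2$ the squaring map on $U(2^k)$ is four-to-one (its kernel $\{\pm1,\pm(2^{k-1}-1)\}$ has order four), so $4\,\chi_a(Q_{2^k})=\sum_{u\ \mathrm{odd}}\exp\!\left(-\tfrac{2\pi\mathbf{i}au^2}{2^k}\right)$. I would write this odd-indexed sum as the full quadratic Gauss sum modulo $2^k$ minus its even part, and observe that the substitution $u=2v$ collapses the even part to twice a quadratic Gauss sum modulo $2^{k-2}$; thus
\begin{align*}
4\,\chi_a(Q_{2^k})=\sum_{u=0}^{2^k-1}\exp\!\left(-\tfrac{2\pi\mathbf{i}au^2}{2^k}\right)-2\sum_{v=0}^{2^{k-2}-1}\exp\!\left(-\tfrac{2\pi\mathbf{i}av^2}{2^{k-2}}\right).
\end{align*}
Evaluating these two Gauss sums for every $2$-adic valuation of $a$ and subtracting should yield the trichotomy: the difference is nonzero precisely when $2^{k-3}\mid a$, giving the phase $2^{k-3}\exp(-\tfrac{a\pi\mathbf{i}}{2^{k-1}})$ when $2^{k-3}\,\|\,a$ and $2^{k-3}(-\mathbf{i})^{a/2^{k-2}}$ when $2^{k-2}\mid a$, and vanishing otherwise.

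The main obstacle is the $p=2$ evaluation. Unlike the odd case, where a single quadratic character and the standard Gauss sum close the argument, here $(\mathbb{Z}/2^k\mathbb{Z})^\times$ is not cyclic and the even-modulus quadratic Gauss sums require the reciprocity theory for quadratic Gauss sums (or an induction on $k$ via the displayed recursion). The delicate bookkeeping is tracking $v_2(a)$ through the subtraction so that the cancellations produce exactly the phases $\exp(-\tfrac{a\pi\mathbf{i}}{2^{k-1}})$ and $(-\mathbf{i})^{a/2^{k-2}}$ together with the sharp vanishing threshold $v_2(a)\le k-4$.
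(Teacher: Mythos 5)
The paper itself offers no proof of this statement --- it is quoted verbatim from Huang~\cite{Jing} --- so there is no internal argument to compare against; I can only judge your proposal on its own terms. Your odd-prime case is correct and essentially complete: writing the indicator of $Q_{p^k}$ inside $U(p^k)$ as $\frac{1}{2}(1+\psi(b))$ with $\psi$ the quadratic character lifted from $\mathbb{Z}_p$ is justified by Hensel lifting, the untwisted sum is the Ramanujan sum $c_{p^k}(a)$ with values $\varphi(p^k)$, $-p^{k-1}$, $0$ in the three regimes of $a$, and the twisted sum collapses (after summing the geometric series over lifts) to $p^{k-1}(-c/p)g_p=p^{k-1}(c/p)\sqrt{p}\,\frac{1+\mathbf{i}^p}{1+\mathbf{i}}$ when $a=p^{k-1}c$; assembling these reproduces part (ii) exactly, including the sign conventions.

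The $p=2$, $k>2$ branch, however, is a plan rather than a proof. Your reduction $4\chi_a(Q_{2^k})=G(-a,2^k)-2G(-a,2^{k-2})$ is valid (the kernel of squaring on $U(2^k)$ is indeed $\{\pm 1,\pm(2^{k-1}-1)\}$), but the step you yourself flag as the main obstacle --- evaluating even-modulus quadratic Gauss sums for every $2$-adic valuation of $a$ and tracking the cancellation --- is exactly where all the content lies, and it is not carried out. There is a three-line route that avoids Gauss sums entirely: for $k\ge 3$ every odd square is $\equiv 1\pmod 8$, and since $|Q_{2^k}|=|U(2^k)|/4=2^{k-3}$ equals the number of residues mod $2^k$ congruent to $1$ mod $8$, the set $Q_{2^k}$ is precisely $\{1+8t: 0\le t<2^{k-3}\}$. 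Hence $\chi_a(Q_{2^k})=\exp(-\frac{2\pi\mathbf{i}a}{2^k})\sum_{t=0}^{2^{k-3}-1}\exp(-\frac{2\pi\mathbf{i}at}{2^{k-3}})$ is a pure geometric series: it vanishes unless $2^{k-3}\mid a$, and otherwise equals $2^{k-3}\exp(-\frac{a\pi\mathbf{i}}{2^{k-1}})$, which becomes $2^{k-3}(-\mathbf{i})^{a/2^{k-2}}$ when $2^{k-2}\mid a$. Substituting this observation for your Gauss-sum computation closes the gap and also makes the sharp threshold $v_2(a)\le k-4$ for vanishing immediate.
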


Using the next theorem, one can determine the eigenvalues of quadratic unitary Cayley graphs $G_n$ for $n > 1$. 
\begin{theorem}\label{theorem canonical factorization}\emph{\cite{Jing}}
Let $n$ be a positive integer strictly greater than one such that the canonical factorization of $n$ is given by $n={p_1}^{k_1} \cdots {p_\ell}^{k_\ell}$. Also, let $0 \leq t \leq \ell$ with $-1 \in Q_{{p_j}^{k_j}}$ for $1 \leq j \leq t$ and $-1 \notin Q_{{p_j}^{k_j}}$ for $t+1 \leq j \leq \ell$.
\begin{enumerate}
\item [(i)] If $ t=\ell$, then the eigenvalues of $G_n$ are given by
$$\chi_{a_1} \left(Q_{{p_1}^{k_1}} \right) \cdots \chi_{a_{\ell}} \left(Q_{{p_{\ell}}^{k_{\ell}}} \right),~~~\mathrm{where}~a_j \in {\mathbb{Z}}_{{p_j}^{k_j}}~\mathrm{for}~1 \leq j \leq \ell.$$ 

\item [(ii)] If $ t < \ell$, then the eigenvalues of $G_n$ are given by
$$\chi_{a_1} \left(Q_{{p_1}^{k_1}} \right) \cdots \chi_{a_{t}} \left(Q_{{p_{t}}^{k_{t}}} \right)\left[\chi_{a_{t+1}} \left(Q_{{p_{t+1}}^{k_{t+1}}} \right) \cdots \chi_{a_{\ell}}\left(Q_{{p_{\ell}}^{k_{\ell}}} \right) + \overline {\chi_{a_{t+1}}\left(Q_{{p_{t+1}}^{k_{t+1}}} \right)} \cdots \overline{\chi_{a_{\ell}}\left(Q_{{p_{\ell}}^{k_{\ell}}} \right)}\right],$$ 
$\mathrm{where}~a_j \in {\mathbb{Z}}_{{p_j}^{k_j}}~\mathrm{for}~1 \leq j \leq \ell$.
\end{enumerate}
\end{theorem}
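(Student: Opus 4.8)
The plan is to combine the character-theoretic description of the eigenvalues of abelian Cayley graphs recorded in Theorem~\ref{spectrum circulant graph} with the Chinese Remainder Theorem, so as to reduce the global computation over $\mathbb{Z}_n$ to a product of local computations over each $\mathbb{Z}_{p_j^{k_j}}$, for which Theorem~\ref{theorem character prime power} already supplies explicit values. For a circulant graph $\mathrm{Cay}(\mathbb{Z}_n, S)$ with $S=-S$, each eigenvalue is a character sum $\sum_{s \in S}\chi_a(s)$ with $\chi_a(x)=\omega_n^{ax}$; since $S$ is symmetric this is real and agrees with the cosine expression of Theorem~\ref{spectrum circulant graph}. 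Thus the entire task reduces to evaluating $\chi_a(T_n)$ for the connection set $T_n = Q_n \cup (-Q_n)$.

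First I would establish the factorization of $Q_n$ under CRT. Writing $\mathbb{Z}_n \cong \mathbb{Z}_{p_1^{k_1}}\times\cdots\times\mathbb{Z}_{p_\ell^{k_\ell}}$ and $a \leftrightarrow (a_1,\dots,a_\ell)$, a residue $u$ lies in $U(n)$ exactly when each coordinate $u_j$ lies in $U(p_j^{k_j})$, and $u^2$ corresponds to $(u_1^2,\dots,u_\ell^2)$. I would check that this gives an equality of sets $Q_n \cong Q_{p_1^{k_1}}\times\cdots\times Q_{p_\ell^{k_\ell}}$, the nontrivial inclusion being that any tuple of local squares lifts to a global square (choose a unit reducing to the prescribed local square roots). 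Since $\chi_a$ factors as $\chi_{a_1}\otimes\cdots\otimes\chi_{a_\ell}$, the character sum becomes multiplicative:
\[
\chi_a(Q_n)=\prod_{j=1}^{\ell}\chi_{a_j}\!\left(Q_{p_j^{k_j}}\right).
\]

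Next I would split according to whether $-1 \in Q_n$. As a product of squares is a square, $-1 \in Q_n$ iff $-1 \in Q_{p_j^{k_j}}$ for every $j$, i.e.\ iff $t=\ell$. In case (i) this forces $-Q_n = Q_n$, hence $T_n = Q_n$, and the multiplicative formula above is exactly the stated product. In case (ii), $-1 \notin Q_n$ makes $Q_n$ and $-Q_n$ disjoint, so $\chi_a(T_n)=\chi_a(Q_n)+\chi_a(-Q_n)$; because $\chi_a(-q)=\overline{\chi_a(q)}$, the second summand is $\overline{\chi_a(Q_n)}$. Finally, for $1 \le j \le t$ the symmetry $-Q_{p_j^{k_j}}=Q_{p_j^{k_j}}$ makes each factor $\chi_{a_j}(Q_{p_j^{k_j}})$ real, so pulling the real prefactor $\prod_{j \le t}\chi_{a_j}(Q_{p_j^{k_j}})$ out of the conjugation yields the bracketed expression in (ii).

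The spectral and CRT bookkeeping is standard; the main obstacle is verifying the \emph{exactness} of the set-level decomposition $Q_n \cong \prod_j Q_{p_j^{k_j}}$ (rather than a mere multiplicity-weighted identity), which is what makes the character sum genuinely factorize, and then tracking precisely which local factors are real in order to isolate the real prefactor in case (ii). With those two points settled, the statement follows by assembling the local values from Theorem~\ref{theorem character prime power}.
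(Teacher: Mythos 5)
The paper does not prove this statement; it is quoted verbatim as a preliminary result from Huang \cite{Jing}, so there is no internal proof to compare against. Your argument is nonetheless a correct and complete derivation, and it follows the standard (and, in substance, Huang's) route: you correctly identify the three points that actually need checking, namely the set-level CRT factorization $Q_n \cong Q_{p_1^{k_1}}\times\cdots\times Q_{p_\ell^{k_\ell}}$ (with the surjectivity of lifting local square roots), the disjointness of $Q_n$ and $-Q_n$ precisely when $-1\notin Q_n$ (equivalently $t<\ell$), and the reality of the factors $\chi_{a_j}(Q_{p_j^{k_j}})$ for $j\le t$, which is what lets the prefactor be pulled outside the conjugation in case (ii). No gaps.
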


\section{State transfer on unitary Cayley graphs}\label{sec 3}
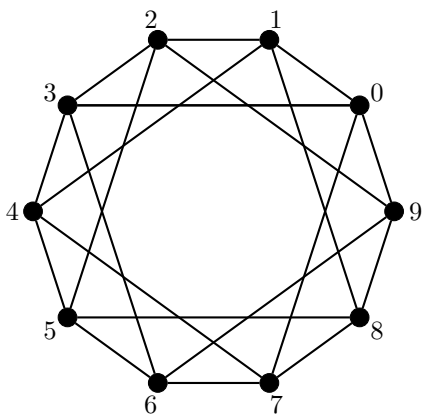
\begin{figure}
\begin{center}
	\begin{tikzpicture}[scale=1.2,auto,swap]
    \tikzstyle{blackvertex}=[circle,draw=black,fill=black]
    \tikzstyle{bluevertex}=[circle,draw=blue,fill=blue]
    \tikzstyle{greenvertex}=[circle,draw=green,fill=green]
    \tikzstyle{yellowvertex}=[circle,draw=yellow,fill=yellow]
    \tikzstyle{brownvertex}=[circle,draw=brown,fill=brown]
    \tikzstyle{redvertex}=[circle,draw=red,fill=red]
    \tikzstyle{blackvertex}=[circle,draw=black,fill=black]
    \tikzstyle{bluevertex}=[circle,draw=blue,fill=blue]
    \tikzstyle{greenvertex}=[circle,draw=green,fill=green]
    \tikzstyle{yellowvertex}=[circle,draw=yellow,fill=yellow]
    \tikzstyle{brownvertex}=[circle,draw=brown,fill=brown]
    \tikzstyle{redvertex}=[circle,draw=red,fill=red]
    \tikzstyle{brownvertex}=[circle,draw=brown,fill=brown]
    \tikzstyle{redvertex}=[circle,draw=red,fill=red]

    \node [blackvertex,scale=0.75] (a0) at (36:2) {};
    \node [blackvertex,scale=0.75] (a1) at (72:2) {};
    \node [blackvertex,scale=0.75] (a2) at (108:2) {};
    \node [blackvertex,scale=0.75] (a3) at (144:2) {};
    \node [blackvertex,scale=0.75] (a4) at (180:2) {};
    \node [blackvertex,scale=0.75] (a5) at (216:2) {};
    \node [blackvertex,scale=0.75] (a6) at (252:2) {};
    \node [blackvertex,scale=0.75] (a7) at (288:2) {};
    \node [blackvertex,scale=0.75] (a8) at (324:2) {};
    \node [blackvertex,scale=0.75] (a9) at (360:2) {};

    \node [scale=1] at (36:2.24) {0};
    \node [scale=1] at (72:2.24) {1};
    \node [scale=1] at (108:2.24) {2};
    \node [scale=1] at (144:2.24) {3};
    \node [scale=1] at (180:2.23) {4};
    \node [scale=1] at (216:2.24) {5};
    \node [scale=1] at (252:2.25) {6};
    \node [scale=1] at (288:2.25) {7};
    \node [scale=1] at (324:2.24) {8};
    \node [scale=1] at (360:2.24) {9};
    
\draw [black,thick] (a0) -- (a1) -- (a2) -- (a3) -- (a4) -- (a5) -- (a6) --(a7) -- (a8) -- (a9) -- (a0);
\draw [black,thick] (a0) -- (a3);
\draw [black,thick] (a0) -- (a7);
\draw [black,thick] (a1) -- (a4);
\draw [black,thick] (a1) -- (a8);
\draw [black,thick] (a2) -- (a5);
\draw [black,thick] (a2) -- (a9);
\draw [black,thick] (a3) -- (a0);
\draw [black,thick] (a3) -- (a6);
\draw [black,thick] (a4) -- (a7);
\draw [black,thick] (a5) -- (a8);
\draw [black,thick] (a6) -- (a9);
\end{tikzpicture}
\caption{Unitary Cayley graph on $10$ vertices admitting PGFR}
\label{figure1}
\end{center}
\end{figure}

Basic et al.~\cite{Basic} proved that the unitary Cayley graph $X_n$ admits PST if and only if $n=2$ or $n=4$. Since $X_n$ is well known to admit periodicity, it follows from Pal~\cite{Pal PhD Thesis} [see Proposition 1.4] and also from Pal~\cite{State transfer on circulant Pal} [see Proposition 1.3.1] that $X_n$ admits PGST if and only if $n=2$ or $n=4$. In this section, we explore FR and PGFR on $X_n$. We classify unitary Cayley graphs $X_n$ admitting PGFR. We also classify $X_n$ admitting FR. It turns out that $X_n$ admits FR if and only if it admits PGFR. The following lemma is due to Klotz and Sander~\cite{Klotz}, which is useful to determine the eigenvalues of $X_n$.

\begin{lema}\emph{\cite{Klotz}}\label{unitary Cayley graph 1}
Let $\mu$ denote the $M\ddot{o}bius$ function and $\varphi$ denote Euler's phi function. Also, let $n$ be a positive integer and $r$ be an integer such that $0 \leq r \leq n-1$. Then the eigenvalues of $X_n$ are given by
\begin{align*}
\lambda_r = \mu(c(r, n)) \frac{\varphi(n)}{\varphi(c(r, n))},~~~\mathrm{where}~c(r, n) = \frac{n}{\gcd(r, n)}.
\end{align*}
\end{lema}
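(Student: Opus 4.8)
The plan is to recognize the eigenvalues of $X_n$ as Ramanujan sums and then derive their classical closed form. Since $X_n = \mathrm{Cay}(\mathbb{Z}_n, U(n))$ and $U(n)$ is symmetric, the circulant structure gives the eigenvalues as the sums $\sum_{u \in U(n)} \omega_n^{ru}$ for $0 \le r \le n-1$ (equivalently the cosine sums of Theorem~\ref{spectrum circulant graph}, since $U(n)$ is closed under negation), so that
$$\lambda_r = \sum_{\substack{1 \le u \le n \\ \gcd(u,n)=1}} \exp\!\left(\frac{2\pi \mathbf{i}\, r u}{n}\right),$$
which is exactly the Ramanujan sum $c_n(r)$. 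Note that $c_n(r)$ depends on $r$ only through $\gcd(r,n)$, hence only through $c(r,n) = n/\gcd(r,n)$, matching the right-hand side of the claim; so it suffices to prove the Hölder-type identity $c_n(r) = \mu(c(r,n))\,\varphi(n)/\varphi(c(r,n))$.

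First I would extract the coprimality constraint with the Möbius identity $\sum_{d \mid \gcd(u,n)}\mu(d) = [\gcd(u,n)=1]$. Interchanging the order of summation and writing $u = db$ turns $\lambda_r$ into
$$\sum_{d \mid n} \mu(d) \sum_{b=1}^{n/d} \omega_{n/d}^{rb}.$$
The inner geometric sum equals $n/d$ when $(n/d) \mid r$ and vanishes otherwise, and the condition $(n/d)\mid r$ is equivalent to $c(r,n) \mid d$. This collapses $\lambda_r$ to the restricted divisor sum
$$\lambda_r = \sum_{\substack{d \mid n \\ c(r,n)\, \mid\, d}} \mu(d)\,\frac{n}{d}.$$

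The remaining step, which I expect to be the main obstacle, is simplifying this divisor sum. Writing $c = c(r,n)$ and $g = n/c = \gcd(r,n)$, I would substitute $d = ce$ with $e \mid g$ to obtain $g\sum_{e \mid g} \mu(ce)/e$. If $c$ is not squarefree, then $\mu(ce)=0$ throughout and both sides vanish (as $\mu(c)=0$); if $c$ is squarefree, the surviving terms require $\gcd(c,e)=1$, where $\mu(ce) = \mu(c)\mu(e)$, reducing the sum to $g\,\mu(c)\sum_{e\mid g,\ \gcd(e,c)=1}\mu(e)/e = g\,\mu(c)\prod_{p\mid g,\ p\nmid c}(1-1/p)$. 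Comparing this with $\varphi(n)/\varphi(c) = g\prod_{p\mid n,\ p\nmid c}(1-1/p)$, and using that the primes dividing $n$ but not $c$ coincide with those dividing $g$ but not $c$ (since $n=cg$), finishes the identity. A cleaner alternative that avoids this bookkeeping is to observe that both sides are multiplicative in $n$ — the left side by the Chinese Remainder Theorem applied to $U(n)$, the right side since $\mu$, $\varphi$, and $c(r,\cdot)$ split over coprime parts of $n$ — and then to verify equality on prime powers $n=p^k$ by a direct geometric-sum computation, splitting according to the $p$-adic valuation $v_p(r)$ into the cases $v_p(r) \ge k$, $v_p(r)=k-1$, and $v_p(r)<k-1$; in the last case $\mu$ of a non-squarefree number forces both sides to zero.
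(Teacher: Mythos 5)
The paper gives no proof of this lemma -- it is imported verbatim from Klotz and Sander \cite{Klotz} -- and your argument is correct and is essentially the standard one used there: identify the eigenvalues of the circulant $\mathrm{Cay}({\mathbb{Z}}_n, U(n))$ as the Ramanujan sums $c_n(r)$, then establish the H\"older-type closed form either by the M\"obius/geometric-sum computation or by multiplicativity and a prime-power check. Both of your suggested routes for the final identity go through (in the divisor-sum route, the key observations that $\mu(ce)=0$ when $c$ is not squarefree and that the primes dividing $n=cg$ but not $c$ are exactly those dividing $g$ but not $c$ are exactly what is needed), so there is no gap.
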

The authors in~\cite{Kalita} proved the following theorem, which gives a sufficient condition for the non-existence of PGFR on circulant graphs. 
\begin{theorem}\label{PGFR non existence circulant}\emph{\cite{Kalita}}
Let $n$ be a positive integer such that $2pq \mid n$, where $p$ and $q$ are two distinct odd primes. Also, let $S$ be a connection set in ${\mathbb{Z}}_n$ such that $p \nmid y$ and $q \nmid y$ for all $y \in S$. Then $\mathrm{Cay}({\mathbb{Z}}_n, S)$ does not admit PGFR.
\end{theorem}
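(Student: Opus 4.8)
The plan is to contradict condition (ii) of Theorem~\ref{PGFR circulant}. Write the eigenvalue attached to an element $c\in{\mathbb Z}_n$ as $\theta(c):=\sum_{y\in S}\omega_n^{yc}$; since $S=-S$ this is real, and by Theorem~\ref{spectrum circulant graph} it equals $\lambda_r$ when $c=a_r$. Recall also the parity convention $r\equiv a_r\pmod 2$, so summing coefficients over odd indices $r$ is the same as summing over odd elements $c$. To prove that no PGFR occurs, I would produce integers $\ell_1,\dots,\ell_{n-1}$ with $\sum_{r=1}^{n-1}\ell_r(\lambda_r-\lambda_0)=0$ and $\sum_{r\text{ odd}}\ell_r=\pm1$; equivalently, allowing a coefficient on $\lambda_0$, a homogeneous integer relation $\sum_{c}m_c\,\theta(c)=0$ whose coefficient vector satisfies $\sum_c m_c=0$ (this is what lets one eliminate $\lambda_0$ and pass to condition-(ii) form) and $\sum_{c\text{ odd}}m_c=\pm1$.

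The building blocks would be ``sieve'' relations coming from the hypothesis on $S$. For every $a\in{\mathbb Z}_n$, using $\omega_n^{y\cdot jn/p}=\omega_p^{yj}$,
$$\sum_{j=0}^{p-1}\theta\!\left(a+\tfrac{jn}{p}\right)=\sum_{y\in S}\omega_n^{ya}\sum_{j=0}^{p-1}\omega_p^{\,yj}=0,$$
because $\sum_{j=0}^{p-1}\omega_p^{\,yj}=0$ whenever $p\nmid y$, which is exactly the assumption $p\nmid y$ for all $y\in S$; the same identity holds with $q$ in place of $p$. Denote these relations $P_a$ and $Q_a$. The decisive structural observation is parity: since $2pq\mid n$, both $n/p$ and $n/q$ are even, so each block $\{a+jn/p\}_{j}$ and $\{a+jn/q\}_{j}$ consists entirely of elements of the same parity as $a$. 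Hence $P_a$ (resp. $Q_a$) has total coefficient sum $p$ (resp. $q$), is supported on odd elements when $a$ is odd, and on even elements when $a$ is even.

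I would then combine the sieves by Bezout. Pick integers $u,v$ with $pu+qv=1$ and set $R:=u(P_1-P_0)+v(Q_1-Q_0)$, taking the base points $1$ (odd) and $0$ (even). Each of $P_1,P_0,Q_1,Q_0$ is the zero relation, so $R$ is a valid integer relation among the $\theta(c)$. By linearity its total coefficient sum is $up-up+vq-vq=0$, while, since $P_1$ and $Q_1$ are supported entirely on odd elements, its odd-coefficient sum equals $up+vq=1$. Passing $R$ to the form $\sum_{r=1}^{n-1}\ell_r(\lambda_r-\lambda_0)=0$ therefore yields $\sum_{r\text{ odd}}\ell_r=1$, contradicting condition (ii) and proving that $\mathrm{Cay}({\mathbb Z}_n,S)$ admits no PGFR. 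One routine check is that the sieve blocks overlap only at the common base point, which follows from $\gcd(p,q)=1$ together with $1\le j\le p-1$ and $1\le j'\le q-1$.

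The step I expect to be the main obstacle is satisfying both constraints simultaneously: landing in condition-(ii) form forces the total coefficient sum to vanish (to cancel $\lambda_0$), yet the odd part must be exactly $\pm1$. A single sieve relation cannot do this, since its total sum is $p$ or $q$ and its odd sum is $0$, $p$, or $q$. Separating the $p$- and $q$-sieves at base points of opposite parity and balancing them with Bezout coefficients is what reconciles the two requirements, and this is also exactly where the full hypothesis $2pq\mid n$ (not merely $pq\mid n$) enters, guaranteeing that $n/p$ and $n/q$ are even and hence that every sieve block is parity-homogeneous.
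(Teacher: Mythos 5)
The paper itself offers no proof of this statement: Theorem~\ref{PGFR non existence circulant} is imported verbatim from~\cite{Kalita}, so there is no internal argument to compare yours against. Judged on its own, your proof is correct and complete. The sieve identities $\sum_{j=0}^{p-1}\theta(a+jn/p)=0$ hold precisely because $p\nmid y$ for every $y\in S$ forces $\sum_{j}\omega_p^{yj}=0$; the hypothesis $2pq\mid n$ does exactly the work you assign to it, namely making $n/p$ and $n/q$ even so that each sieve block is parity-homogeneous (and it also guarantees $n$ is even, so parity in ${\mathbb Z}_n$ is well defined); and the Bezout combination $u(P_1-P_0)+v(Q_1-Q_0)$ has total coefficient sum $0$ and odd-coefficient sum $up+vq=1$ by linearity. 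Eliminating $\lambda_0$ using the vanishing total sum then puts the relation in the form required by condition~(ii) of Theorem~\ref{PGFR circulant} with $\sum_{r\ \mathrm{odd}}\ell_r=1$, which rules out PGFR from every vertex pair. One small remark: the disjointness check you flag as ``routine'' is not actually needed, since both functionals you evaluate (the total sum and the odd-restricted sum) are linear in the coefficient vector, so overlaps among the blocks could not change either computation. Your construction is also consonant in spirit with how the present paper handles its special cases (e.g., Lemmas~\ref{unitary Cayley graph 4}--\ref{unitary Cayley graph 6}), where explicit integer vectors $\ell_r$ violating condition~(ii) are exhibited; your Bezout-balanced sieve is simply the uniform version of that device.
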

Now we use Theorem~\ref{PGFR non existence circulant} to prove the following lemma.

\begin{lema}\label{unitary Cayley graph 2}
The graph $X_n$ does not admit PGFR if $2pq \mid n$, for some distinct odd primes $p$ and $q$.
\end{lema}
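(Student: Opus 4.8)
The plan is to apply Theorem~\ref{PGFR non existence circulant} directly, since $X_n$ is by definition the circulant graph $\mathrm{Cay}({\mathbb{Z}}_n, U(n))$ and Theorem~\ref{PGFR non existence circulant} already supplies a sufficient condition for the non-existence of PGFR phrased in exactly the divisibility hypothesis we are given, namely $2pq \mid n$ for distinct odd primes $p$ and $q$. The only point requiring verification is that the specific connection set $S = U(n)$ satisfies the remaining hypothesis of that theorem, namely that $p \nmid y$ and $q \nmid y$ for every $y \in S$.

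First I would note that $2pq \mid n$ forces both $p \mid n$ and $q \mid n$. Now take any $y \in U(n)$; by the definition of $U(n)$ we have $\gcd(y, n) = 1$. Since $p$ is a divisor of $n$, if $p$ divided $y$ then $p$ would be a common factor of $y$ and $n$, contradicting $\gcd(y,n)=1$; hence $p \nmid y$. The identical argument with $q$ in place of $p$ gives $q \nmid y$. Thus the connection set $U(n)$ meets every hypothesis of Theorem~\ref{PGFR non existence circulant}.

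Finally, invoking Theorem~\ref{PGFR non existence circulant} with $S = U(n)$ yields that $\mathrm{Cay}({\mathbb{Z}}_n, U(n)) = X_n$ does not admit PGFR, which is the desired conclusion. There is essentially no obstacle here: the whole content of the lemma is recognizing that the coprimality condition built into the definition of $U(n)$ is precisely what is needed to trigger the previously established non-existence theorem. The only step worth stating explicitly is the implication $\gcd(y,n)=1 \Rightarrow p \nmid y \text{ and } q \nmid y$, which relies solely on $p$ and $q$ being divisors of $n$.
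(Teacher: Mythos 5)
Your proof is correct and follows exactly the same route as the paper: both apply Theorem~\ref{PGFR non existence circulant} to $S = U(n)$ after observing that $\gcd(y,n)=1$ together with $p,q \mid n$ forces $p \nmid y$ and $q \nmid y$. Your write-up is in fact slightly more explicit than the paper's about why the coprimality condition yields the required hypothesis.
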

\begin{proof}
Recall that $X_n=\mathrm{Cay}({\mathbb{Z}}_n, U(n))$. If $u \in U(n)$, then $p \nmid u$ and $q \nmid u$. Therefore, Theorem~\ref{PGFR non existence circulant} yields that the graph $X_n$ does not admit PGFR. 
\end{proof}
From Theorem~\ref{PGFR circulant} and Lemma~\ref{unitary Cayley graph 2}, we observe that if $X_n$ admits PGFR, then it is necessary that $n=2^h p^s$, where $h \in \mathbb{N}$, $s \in \mathbb{N} \cup \{0\}$ and $p$ is an odd prime. It is obvious that $X_n$ admits PGFR for $n=2$ and $n=4$. Now we assume that $n \geq 6$. The following lemma provides an infinite family of $X_n$ admitting PGFR.

\begin{lema}\label{unitary Cayley graph 3}
Let $n=2p$, where $p$ is an odd prime. Then $X_n$ admits PGFR.
\end{lema}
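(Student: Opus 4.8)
The goal is to show $X_n$ with $n = 2p$ ($p$ an odd prime) admits PGFR. I plan to invoke Theorem~\ref{PGFR circulant} with $a = 0$ and $b = p = \frac{n}{2}$, so that condition (i) is automatically satisfied. The whole task then reduces to verifying condition (ii): for arbitrary integers $\ell_1, \ldots, \ell_{n-1}$, the relation $\sum_{r=1}^{n-1} \ell_r(\lambda_r - \lambda_0) = 0$ must force $\sum_{r\ \mathrm{odd}} \ell_r \neq \pm 1$.

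**Computing the eigenvalues.**

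Let me think about this carefully. For $n = 2p$, I will use Lemma~\ref{unitary Cayley graph 1} to write down the eigenvalues explicitly. Here $\lambda_r = \mu(c(r,n)) \frac{\varphi(n)}{\varphi(c(r,n))}$ with $c(r,n) = \frac{n}{\gcd(r,n)}$ and $\varphi(n) = \varphi(2p) = p-1$. The divisors of $n = 2p$ are $1, 2, p, 2p$, so $c(r,n)$ takes only these four values, giving just four distinct eigenvalues: $\lambda = \varphi(n) = p-1$ (when $c=1$, i.e.\ $r=0$), then the values at $c = 2, p, 2p$ computed from the formula, namely $\mu(2)\frac{p-1}{\varphi(2)} = -(p-1)$, $\mu(p)\frac{p-1}{\varphi(p)} = \frac{-(p-1)}{p-1} = -1$, and $\mu(2p)\frac{p-1}{\varphi(2p)} = \frac{p-1}{p-1} = 1$. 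I then need to track which residues $r$ (and their parities) give each eigenvalue: $c(r,n)=1$ means $r=0$; $c(r,n)=2$ means $\gcd(r,n)=p$, i.e.\ $r=p$ (odd); $c(r,n)=p$ means $\gcd(r,n)=2$, i.e.\ $r$ even and not divisible by $p$; and $c(r,n)=2p$ means $\gcd(r,n)=1$, i.e.\ $r$ odd and coprime to $p$.

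**Verifying condition (ii).**

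With these values in hand, the relation $\sum_r \ell_r(\lambda_r - \lambda_0) = 0$ becomes a linear Diophantine condition grouping the $\ell_r$ by which of the three nonzero differences $\lambda_r - \lambda_0$ they multiply. The key observation I expect to exploit is the parity structure: the eigenvalue $-1$ (from $c=2p$) is attached only to odd indices $r$ coprime to $p$, while $\lambda_0 = p-1$ is at the even index $r=0$. The plan is to reduce the derived integer relation modulo a suitable prime (likely $p$, since the differences $\lambda_r - \lambda_0$ are $-(2p-2)$, $-p$, and $-p$ for the three cases, and the factor $p$ appears cleanly) and show that any solution forces $\sum_{r\ \mathrm{odd}} \ell_r$ to lie in a congruence class incompatible with $\pm 1$, or more directly to be divisible by something that rules out $\pm 1$.

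The main obstacle I anticipate is the bookkeeping: correctly separating the odd-indexed $\ell_r$ (those at $r=p$ and at the $\varphi(p)\cdot$-many units coprime to $p$) from the even-indexed ones, and then arguing that the single relation cannot be satisfied with $\sum_{r\ \mathrm{odd}} \ell_r = \pm 1$. I expect the cleanest route is to observe that the differences $\lambda_r-\lambda_0$ take only a few values, collect $\sum_r \ell_r(\lambda_r-\lambda_0)=0$ into a relation of the form $A\cdot(\text{sum over one class}) + B\cdot(\text{sum over another}) + \cdots = 0$, and then use divisibility by $p$ to pin down $\sum_{r\ \mathrm{odd}}\ell_r \bmod p$, concluding that it cannot equal $\pm 1$ since $p$ is an odd prime (so $\pm 1 \not\equiv 0 \pmod p$ while the forced value is $\equiv 0$). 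Once condition (ii) is confirmed, Theorem~\ref{PGFR circulant} immediately yields PGFR from $0$ to $p$.
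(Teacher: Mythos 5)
Your overall strategy is the same as the paper's: compute the four eigenvalue classes of $X_{2p}$ from the Klotz--Sander formula, group the relation $\sum_{r}\ell_r(\lambda_r-\lambda_0)=0$ by class, and deduce that $p$ divides $\sum_{r\ \mathrm{odd}}\ell_r$, so that condition (ii) of Theorem~\ref{PGFR circulant} holds. However, one of the numbers you quote must be corrected, and it is exactly the number the argument hinges on. For odd $r\neq p$ you correctly found $\lambda_r=1$, so $\lambda_r-\lambda_0=2-p$, not $-p$ as you later assert (relatedly, the eigenvalue $-1$ comes from $c=p$, i.e.\ the even nonzero indices, not from $c=2p$). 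This is not a cosmetic slip: if the difference at the odd indices $r\neq p$ really were $-p$, the same as at the even indices, the relation would give no control over $\sum_{r\ \mathrm{odd}}\ell_r$ --- for instance $\ell_{r_1}=1$ at an odd $r_1\neq p$ and $\ell_{r_2}=-1$ at an even $r_2\neq 0$, all other $\ell_r=0$, would satisfy the relation with odd-indexed sum equal to $1$, and the lemma would fail. With the correct values the relation rearranges to
\begin{align*}
(2-p)\sum_{r\ \mathrm{odd}}\ell_r = p\Bigl(\ell_p+\sum_{r\ \mathrm{even}}\ell_r\Bigr),
\end{align*}
and since $\gcd(2-p,p)=1$ the divisibility $p\mid\sum_{r\ \mathrm{odd}}\ell_r$ follows, which is precisely the paper's computation. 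So the plan is right, but the "$-p$, $-p$" bookkeeping must be replaced by "$-p$, $2-p$" for the modular argument to close.
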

\begin{proof}
The eigenvalues of $X_n$ are given by
\[{\lambda}_r=\left\{ \begin{array}{ll} 
p-1 & \textrm{ if } r=0\\
-1 & \textrm{ if } r~\mathrm{is~even~and}~r \neq 0\\
 1-p & \textrm{ if } r=p~\\
 1 & \textrm{ otherwise. } \end{array}\right. \]
Suppose the integers $\ell_1,\ldots,\ell_{n-1}$ satisfy the relation 
\begin{align*}
\sum_{r=1}^{n-1}{\ell_r}(\lambda_r-\lambda_0)=0.
\end{align*}
Then we have  
\begin{align*}
(2-p)\sum_{r~\mathrm{odd}}\ell_r=p(\ell_p+\sum_{r~\mathrm{even}}\ell_r).
\end{align*}
This implies that $p \mid \sum_{r~\mathrm{odd}}\ell_r$ and therefore $\sum_{r~\mathrm{odd}}\ell_r \neq \pm 1$. Now Theorem~\ref{PGFR circulant} yields that $X_n$ admits PGFR. This completes the proof.
\end{proof}
The unitary Cayley graph on 10 vertices admitting PGFR is shown in Figure~\ref{figure1}. In the next lemma, we prove that $X_{4p}$ does not admit PGFR, where $p$ is an odd prime.

\begin{lema}\label{unitary Cayley graph 4}
Let  $n=4p$, where $p$ is an odd prime. Then $X_n$ does not admit PGFR.
\end{lema}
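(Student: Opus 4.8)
The plan is to exhibit an explicit integer combination witnessing the failure of condition (ii) of Theorem~\ref{PGFR circulant}. This is exactly the right target, because that condition depends only on the eigenvalues of the graph and not on the chosen pair of vertices; so if I produce integers $\ell_1,\dots,\ell_{n-1}$ satisfying $\sum_r \ell_r(\lambda_r-\lambda_0)=0$ together with $\sum_{r\ \mathrm{odd}}\ell_r=\pm 1$, then no pair $(a,b)$ can support PGFR and $X_{4p}$ admits no PGFR at all. The first step is to read off the eigenvalues of $X_{4p}$ from Lemma~\ref{unitary Cayley graph 1}, organizing the indices $r$ by the value $c(r,n)=4p/\gcd(r,4p)$. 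Writing $\varphi(4p)=2(p-1)$, one finds $\lambda_0=2(p-1)$ (from $c=1$), $\lambda_r=-2(p-1)$ for $r=2p$ (from $c=2$), $\lambda_r=2$ whenever $\gcd(r,4p)=2$ (from $c=2p$), and $\lambda_r=-2$ whenever $\gcd(r,4p)=4$ (from $c=p$).

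The crucial observation concerns the odd indices. If $r$ is odd, then $\gcd(r,4p)$ is an odd divisor of $4p$, so $c(r,4p)\in\{4,4p\}$; in either case $c(r,4p)$ is divisible by $4$, hence not squarefree, so $\mu(c(r,4p))=0$ and $\lambda_r=0$. Thus \emph{every} odd $r$ contributes $\lambda_r-\lambda_0=2-2p$ to the relation in condition (ii). This complete degeneracy of the spectrum on the odd indices is what makes a contradiction possible. Recording also the remaining differences, I get $\lambda_r-\lambda_0=4-2p$ when $\gcd(r,4p)=2$, $\lambda_r-\lambda_0=-2p$ when $\gcd(r,4p)=4$, and $\lambda_r-\lambda_0=4-4p$ for $r=2p$.

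Using $p\ge 3$, so that $(p-1)/2$ is a positive integer, and noting that the indices $1,2,4$ all lie in $\{1,\dots,4p-1\}$ with $\gcd(1,4p)=1$, $\gcd(2,4p)=2$, and $\gcd(4,4p)=4$, I would set
$$\ell_1=1,\qquad \ell_2=\tfrac{p-1}{2},\qquad \ell_4=-\tfrac{p-1}{2},$$
with all other $\ell_r=0$. A direct substitution gives
$$\sum_{r=1}^{n-1}\ell_r(\lambda_r-\lambda_0)=(2-2p)+\tfrac{p-1}{2}(4-2p)+\left(-\tfrac{p-1}{2}\right)(-2p)=(2-2p)+2(p-1)=0,$$
while $\sum_{r\ \mathrm{odd}}\ell_r=\ell_1=1$. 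Hence the relation of condition (ii) holds but its conclusion fails, so $X_{4p}$ does not admit PGFR.

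I expect the only genuine work to be the eigenvalue bookkeeping of the first two paragraphs; once the vanishing of $\lambda_r$ on all odd indices is isolated, the construction is essentially forced and the final verification is a one-line computation. The single point worth checking is that the coefficients I place on the $\gcd=2$ and $\gcd=4$ classes are realizable by genuine indices in range. This holds because for every odd prime $p$ each of these classes contains $\varphi(2p)=p-1\ge 2$ indices, so concentrating the mass on a single index per class (as above) already suffices; distributing it over several indices would work equally well.
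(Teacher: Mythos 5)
Your proposal is correct and follows essentially the same route as the paper: both invoke Theorem~\ref{PGFR circulant} and exhibit the violating relation $\ell_1=1$, $\ell$ equal to $\pm\frac{p-1}{2}$ on one index with $\lambda_r=2$ and one with $\lambda_r=-2$ (you use $r=4$ where the paper uses $r=8$; both have $\gcd(r,4p)=4$), giving $\sum_r\ell_r(\lambda_r-\lambda_0)=0$ with $\sum_{r\ \mathrm{odd}}\ell_r=1$. The extra bookkeeping you include (vanishing of $\lambda_r$ on all odd indices, realizability of the index classes) is sound but not needed beyond verifying the three specific eigenvalues.
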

\begin{proof}
We have $\lambda_1=0,~\lambda_2=2$ and $\lambda_8=-2$.
Define the integers $\ell_1, \ldots, \ell_{n-1}$ by
\[{\ell}_r=\left\{ \begin{array}{ll} 1 & \textrm{ if } r=1\\
 \frac{p-1}{2} & \textrm{ if } r=2~\\
 \frac{1-p}{2} & \textrm{ if } r=8~\\
 0 & \textrm{ otherwise. } \end{array}\right. \]
Then 
$\sum_{r=1}^{n-1}{\ell_r}(\lambda_r-\lambda_0)=0$ and $\sum_{r~\mathrm{odd}} \ell_r=1$. Now Theorem~\ref{PGFR circulant} yields that $X_n$ does not admit PGFR.
\end{proof}

\begin{lema}\label{unitary Cayley graph 5}
Let $n=2^h p^s$, where $h, s$ are integers such that $h > 2$ and $s \in \{0, 1\}$. Then $X_n$ does not admit PGFR.
\end{lema}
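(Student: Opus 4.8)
The plan is to invoke Theorem~\ref{PGFR circulant} in its contrapositive form: to show that $X_n$ fails to admit PGFR it suffices to exhibit integers $\ell_1, \ldots, \ell_{n-1}$ satisfying $\sum_{r=1}^{n-1}\ell_r(\lambda_r-\lambda_0)=0$ together with $\sum_{r \text{ odd}}\ell_r=\pm 1$. Since condition (ii) of that theorem involves only the eigenvalues and the $\ell_r$, and is independent of the vertices $a,b$, violating it rules out PGFR between every pair of vertices at once. The cleanest way to produce such a combination is to locate one odd index $r_1$ and one even index $r_2$ whose eigenvalues coincide; then $\ell_{r_1}=1$, $\ell_{r_2}=-1$, and all other $\ell_r=0$ does the job, because the relation collapses to $\lambda_{r_1}-\lambda_{r_2}=0$ while the odd part of the sum equals $1$.

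First I would compute $\lambda_1$ and $\lambda_2$ from Lemma~\ref{unitary Cayley graph 1}. For $r=1$ we have $c(1,n)=n=2^h p^s$, and since $h>2$ forces $4\mid n$, the integer $n$ is not squarefree, so $\mu(c(1,n))=0$ and hence $\lambda_1=0$. For $r=2$, using $4\mid n$ we get $\gcd(2,n)=2$, so $c(2,n)=n/2=2^{h-1}p^s$; because $h-1\geq 2$ this is again divisible by $4$ and therefore not squarefree, giving $\mu(c(2,n))=0$ and $\lambda_2=0$. Thus $\lambda_1=\lambda_2=0$, and both computations use only the hypothesis $h>2$ uniformly across $s\in\{0,1\}$.

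With $r_1=1$ (odd) and $r_2=2$ (even), I set $\ell_1=1$, $\ell_2=-1$, and $\ell_r=0$ otherwise; these indices are legitimate since $n\geq 8$. Then $\sum_{r=1}^{n-1}\ell_r(\lambda_r-\lambda_0)=(\lambda_1-\lambda_0)-(\lambda_2-\lambda_0)=\lambda_1-\lambda_2=0$, while $\sum_{r \text{ odd}}\ell_r=\ell_1=1$. Theorem~\ref{PGFR circulant} then yields that $X_n$ does not admit PGFR.

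I do not anticipate a serious obstacle here; the only point that needs care is recognising why the hypothesis $h>2$ is essential. It is precisely the condition $h-1\geq 2$ that makes $c(2,n)=n/2$ non-squarefree and hence $\lambda_2=0$. When $h=2$ (the case $n=4p$ treated in Lemma~\ref{unitary Cayley graph 4}) one instead has $c(2,n)=2p$, which is squarefree, so $\lambda_2=2\neq 0$ and the simple two-term combination is unavailable; this is exactly why the earlier lemma required a more elaborate choice of the $\ell_r$. Contrasting these two situations is the main conceptual content, after which the verification is immediate.
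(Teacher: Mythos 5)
Your proposal is correct and follows exactly the paper's own argument: the paper likewise observes $\lambda_1=\lambda_2=0$ and takes $\ell_1=1$, $\ell_2=-1$, all other $\ell_r=0$, then applies Theorem~\ref{PGFR circulant}. Your explicit M\"obius-function verification that $h>2$ forces both $c(1,n)$ and $c(2,n)$ to be non-squarefree is a helpful elaboration of a step the paper leaves implicit.
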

\begin{proof}
We have $\lambda_{1}=\lambda_{2}=0$. Define the integers $\ell_1, \ldots, \ell_{n-1}$ by
\[{\ell}_r=\left\{ \begin{array}{rl} 1 & \textrm{ if } r=1\\
 -1 & \textrm{ if } r=2~\\
 0 & \textrm{ otherwise. } \end{array}\right. \]
Then 
$\sum_{r=1}^{n-1}{\ell_r}(\lambda_r-\lambda_0)=0$ and $\sum_{r~\mathrm{odd}} \ell_r=1$. Now Theorem~\ref{PGFR circulant} yields that $X_n$ does not admit PGFR. 
\end{proof}

\begin{lema}\label{unitary Cayley graph 6}
Let $n=2^h p^s$, where $h \in \mathbb{N}$ and $s$ is an integer such that $s > 1$. Then the graph $X_n$ does not admit PGFR.
\end{lema}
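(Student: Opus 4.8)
The plan is to invoke Theorem~\ref{PGFR circulant} in its contrapositive form: to show that $X_n$ fails to admit PGFR, it suffices to exhibit integers $\ell_1, \ldots, \ell_{n-1}$ satisfying $\sum_{r=1}^{n-1}\ell_r(\lambda_r - \lambda_0) = 0$ while $\sum_{r~\mathrm{odd}} \ell_r = \pm 1$. This is exactly the strategy already used in Lemmas~\ref{unitary Cayley graph 4} and~\ref{unitary Cayley graph 5}, and I expect the same minimal combination supported on the indices $1$ and $2$ to work here, with the hypothesis $s>1$ playing the role that $h>2$ played in Lemma~\ref{unitary Cayley graph 5}.

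The crux is to verify that $\lambda_1 = \lambda_2 = 0$. By Lemma~\ref{unitary Cayley graph 1}, one has $\lambda_1 = \mu(c(1,n))\varphi(n)/\varphi(c(1,n))$ with $c(1,n) = n = 2^h p^s$, and $\lambda_2 = \mu(c(2,n))\varphi(n)/\varphi(c(2,n))$ with $c(2,n) = n/\gcd(2,n) = 2^{h-1} p^s$, where $\gcd(2,n)=2$ because $h \geq 1$ and $p$ is odd. Since $s>1$ forces $p^2$ to divide both $n$ and $2^{h-1}p^s$, each of these moduli is non-squarefree, so the Möbius function vanishes and therefore $\lambda_1 = \lambda_2 = 0$. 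This is the only step that uses $s>1$, and it is precisely what replaces the vanishing of the same two eigenvalues obtained from $h>2$ in Lemma~\ref{unitary Cayley graph 5}.

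With $\lambda_1 = \lambda_2 = 0$ in hand, I would set $\ell_1 = 1$, $\ell_2 = -1$, and $\ell_r = 0$ for every other $r$. Then $\sum_{r=1}^{n-1}\ell_r(\lambda_r - \lambda_0) = (\lambda_1 - \lambda_0) - (\lambda_2 - \lambda_0) = \lambda_1 - \lambda_2 = 0$, while $\sum_{r~\mathrm{odd}}\ell_r = \ell_1 = 1$, since index $1$ is odd and index $2$ is even. Condition (ii) of Theorem~\ref{PGFR circulant} is thereby violated, so $X_n$ does not admit PGFR.

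There is no genuine obstacle once the vanishing $\lambda_1 = \lambda_2 = 0$ is secured; the only point deserving care is the evaluation of $c(2,n)=2^{h-1}p^s$ across the whole range $h \geq 1$, in particular the boundary case $h=1$ where $c(2,n)=p^s$, so as to confirm that $p^2$ divides it and the eigenvalue vanishes uniformly in $h$. I also note that the construction uses the admissibility of the indices $1$ and $2$, which holds since $n = 2^h p^s \geq 18$ under the hypotheses.
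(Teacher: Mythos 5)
Your proof is correct and follows essentially the same route as the paper: both arguments produce two vanishing eigenvalues via the M\"obius function at a non-squarefree argument (the paper uses $\lambda_1=\lambda_{2^h}=0$, you use $\lambda_1=\lambda_2=0$) and then violate condition (ii) of Theorem~\ref{PGFR circulant} with $\ell$ supported on one odd and one even index. The only difference is the choice of the even index, which is immaterial.
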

\begin{proof}
We have $\lambda_{1}=\lambda_{2^h}=0$. Now the rest of the proof is similar to that of Lemma~\ref{unitary Cayley graph 5}, and hence the details are omitted.
\end{proof}

We combine the preceding lemmas to present the following theorem.
\begin{theorem}\label{unitary Cayley graph 7}
The unitary Cayley graph $X_n$ admits PGFR if and only if $n=2$ or $n=2p$, where $p$ is a prime.
\end{theorem}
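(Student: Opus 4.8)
The plan is to combine the lemmas already established in this section to pin down exactly which values of $n$ survive. First I would invoke the necessary condition derived from Theorem~\ref{PGFR circulant} together with Lemma~\ref{unitary Cayley graph 2}: if $X_n$ admits PGFR then $n$ cannot be divisible by $2pq$ for distinct odd primes $p, q$, and (since PGFR requires $n$ even) $n$ must be of the form $n = 2^h p^s$ with $h \in \mathbb{N}$, $s \in \mathbb{N} \cup \{0\}$, and $p$ an odd prime. This reduces the problem to analyzing this two-parameter family, so the whole argument becomes a finite case split on the exponents $h$ and $s$.

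Next I would dispose of the small base cases directly: $n = 2$ and $n = 4$ admit PGFR (in fact PST, as noted), and these correspond to $s = 0$ with $h \in \{1, 2\}$. For the genuinely composite cases I would read off the lemmas as an exhaustive partition of the remaining $(h, s)$ pairs. Lemma~\ref{unitary Cayley graph 3} handles $h = 1$, $s = 1$ (that is, $n = 2p$) and gives PGFR. Lemma~\ref{unitary Cayley graph 4} kills $h = 2$, $s = 1$ (i.e. $n = 4p$). Lemma~\ref{unitary Cayley graph 5} kills all $h > 2$ with $s \in \{0, 1\}$, and Lemma~\ref{unitary Cayley graph 6} kills every case with $s > 1$ regardless of $h$. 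The point I would verify carefully is that these four lemmas, together with the base cases $n \in \{2, 4\}$, genuinely cover every $(h, s)$ with $h \geq 1$, $s \geq 0$ and $h + s \geq 2$; the only surviving configurations are $s = 0$ with $h \in \{1, 2\}$ and $s = 1$ with $h = 1$, which is precisely $n \in \{2, 4, 2p\}$.

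I would then assemble these observations into the stated equivalence. Since $X_2$ and $X_4$ are the graphs $X_{2p}$ would degenerate to under the reading ``$p$ is a prime'' (note the theorem statement writes $n = 2p$ with $p$ \emph{a prime}, and $X_4 = X_{2 \cdot 2}$ fits this if one allows $p = 2$), I would phrase the forward direction as: PGFR implies $n = 2^h p^s$ by the necessary condition, and then all branches except $n = 2$, $n = 4$, and $n = 2p$ ($p$ odd) are eliminated by Lemmas~\ref{unitary Cayley graph 4}--\ref{unitary Cayley graph 6}. The reverse direction follows from Lemma~\ref{unitary Cayley graph 3} for the odd-prime case and from the known PST at $n = 2, 4$ for the even case.

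The main obstacle is purely bookkeeping rather than mathematical: ensuring the case analysis is airtight and that the phrasing ``$n = 2$ or $n = 2p$ where $p$ is a prime'' is read consistently with the lemmas (which separate $p = 2$ from odd $p$). I would make explicit that allowing $p = 2$ in ``$n = 2p$'' recovers $n = 4$, so the clause ``$n = 2$ or $n = 2p$'' subsumes the base case $n = 4$ and leaves no gap; the only real content is checking that the eigenvalue-based witnesses constructed in Lemmas~\ref{unitary Cayley graph 4}--\ref{unitary Cayley graph 6} indeed satisfy $\sum_{r \text{ odd}} \ell_r = \pm 1$ while lying in the kernel of $\sum_r \ell_r(\lambda_r - \lambda_0)$, which those lemmas already certify.
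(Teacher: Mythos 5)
Your proposal is correct and follows essentially the same route as the paper: the paper's proof of Theorem~\ref{unitary Cayley graph 7} consists precisely of combining Lemmas~\ref{unitary Cayley graph 2}--\ref{unitary Cayley graph 6} with the base cases $n=2$ and $n=4$, exactly as you describe. Your explicit verification that the $(h,s)$ case split is exhaustive and that the phrasing ``$n=2p$ with $p$ a prime'' absorbs $n=4$ via $p=2$ is the same bookkeeping the paper leaves implicit.
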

Now we characterize the existence of FR on the unitary Cayley graph $X_n$. From Theorem~\ref{unitary Cayley graph 7}, it is obvious that if $X_n$ admits FR, then either $n=2$ or $n=2p$ for some prime $p$. The graphs $X_2$ and $X_4$ admit FR. In the following theorem, we use Theorem~\ref{J. Wang} to prove that $X_n$ admits FR, where $n=2p$ and $p$ is an odd prime.

\begin{theorem}\label{UCG FR}
Let $n=2p$, where $p$ is an odd prime. Then the graph $X_n$ admits FR.
\end{theorem}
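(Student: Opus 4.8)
The plan is to apply Theorem~\ref{J. Wang} with $a=0$ and $b=p=\frac{n}{2}$, so that condition (i) is automatically satisfied since $n=2p$ is even and $b=a+\frac{n}{2}$. The entire burden then falls on condition (ii): we must produce a single positive real $t$ such that $\frac{t}{2\pi}(\lambda_x-\lambda_y)\in\Zl$ for every pair $(x,y)\in N$, where $N$ consists of pairs whose difference $x-y$ is even. The crucial observation is that the eigenvalues of $X_{2p}$ take only four values, as computed in Lemma~\ref{unitary Cayley graph 3}: namely $p-1$ (at $r=0$), $1-p$ (at $r=p$), $-1$ (at even $r\neq 0$), and $1$ (at odd $r\neq p$).

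First I would record which eigenvalues can arise as $\lambda_x$ with $x,y$ of the same parity, since $x-y$ even forces $x$ and $y$ to share parity. For the even indices, the eigenvalues lie in $\{p-1,-1\}$ (the value $p-1$ only at the index $0$), so the possible differences $\lambda_x-\lambda_y$ among even indices are $0$ and $\pm p$. For the odd indices, the eigenvalues lie in $\{1-p,1\}$ (the value $1-p$ only at the index $p$), so the possible differences among odd indices are again $0$ and $\pm p$. Hence every nonzero value of $\lambda_x-\lambda_y$ over pairs in $N$ is $\pm p$. This is the key simplification: although $N$ is large, the set of realized differences is just $\{0,\pm p\}$.

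Given this, condition (ii) reduces to finding $t\in\Rl^+$ with $\frac{t}{2\pi}\cdot p\in\Zl$; the value $0$ imposes no constraint. Taking $t=\frac{2\pi}{p}$ (or any positive multiple thereof) makes $\frac{t}{2\pi}(\lambda_x-\lambda_y)\in\{0,\pm 1\}\subseteq\Zl$ for all $(x,y)\in N$. Thus both conditions of Theorem~\ref{J. Wang} hold, and $X_n$ admits FR from $0$ to $p$. I would then remark that this is consistent with Theorem~\ref{unitary Cayley graph 7}, confirming that FR and PGFR coincide for these graphs.

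I do not anticipate a genuine obstacle here; the argument is essentially a careful bookkeeping of the four eigenvalue values and a parity check. The only point requiring care is verifying that the extreme values $p-1$ and $1-p$ are attained only at the single indices $0$ and $p$ respectively, so that no same-parity pair produces a difference outside $\{0,\pm p\}$ (for instance, one must confirm that $0$ is even and $p$ is odd, consistent with the parity convention $a_r\equiv r\pmod 2$ fixed in Section~\ref{sec 2}). Once the list of realized differences is pinned down as $\{0,\pm p\}$, the choice $t=\frac{2\pi}{p}$ finishes the proof immediately.
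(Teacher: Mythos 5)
Your proposal is correct and follows essentially the same route as the paper: both invoke Theorem~\ref{J. Wang} with $b=a+\frac{n}{2}$, use the four-valued spectrum of $X_{2p}$ to show that $\lambda_x-\lambda_y\in\{-p,0,p\}$ for all same-parity pairs $(x,y)\in N$, and then take $t=\frac{2\pi}{p}$. The paper simply organizes the parity bookkeeping into five explicit cases, which is what your argument does implicitly.
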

\begin{proof}
From Lemma~\ref{unitary Cayley graph 3}, recall that the eigenvalues of $X_n$ are given by
\[{\lambda}_r=\left\{ \begin{array}{ll} 
p-1 & \textrm{ if } r=0\\
-1 & \textrm{ if } r~\mathrm{is~even~and}~r \neq 0\\
 1-p & \textrm{ if } r=p~\\
 1 & \textrm{ otherwise. } \end{array}\right. \]
Let $(x, y) \in N$. Then the following five cases arise.

\noindent \textbf{Case 1}. $x$ is even and $y=0$. In this case, $\lambda_x-\lambda_y=-p$. 
 
\noindent \textbf{Case 2}. $x$ and $y$ both are even, and non-zero. In this case, $\lambda_x-\lambda_y=0$. 
 
\noindent \textbf{Case 3}. $x=p$ and $y$ is odd. In this case, $\lambda_x-\lambda_y =-p$. 
 
\noindent \textbf{Case 4}. $x$ and $y$ both are odd, and different from $p$. In this case, $\lambda_x-\lambda_y=0$.  
 
\noindent \textbf{Case 5}. $x$ is odd and $y=p$. In this case, $\lambda_x-\lambda_y=p$.  

Thus $\lambda_x-\lambda_y \in \{-p, 0 , p\}$ for all $(x, y) \in N$. For $t=\frac{2 \pi}{p}$, we have $\frac{t}{2 \pi}(\lambda_x-\lambda_y) \in \{-1, 0 , 1\}$ for all $(x, y) \in N$. Now Theorem~\ref{J. Wang} yields that $X_n$ admits FR. This completes the proof.
\end{proof}
Thus, we find that the unitary Cayley graph $X_n$ admits FR if and only if $n=2$ or $n=2p$ for some prime $p$. The following theorem is the main result of this section.

\begin{theorem}\label{main result unitary Cayley graph}
Let $n$ be a positive integer. Then the unitary Cayley graph $X_n$ admits FR if and only if it admits PGFR.
\end{theorem}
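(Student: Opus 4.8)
The plan is to derive this theorem as a direct corollary of the two classifications already established in this section, namely the PGFR classification in Theorem~\ref{unitary Cayley graph 7} and the FR results in Theorem~\ref{UCG FR} together with the small cases $n=2$ and $n=4$. The essential observation is that both phenomena occur for exactly the same set of values of $n$, so I would structure the argument as a two-way implication and let the prior results do the work.

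First I would dispose of the forward direction, which is immediate and requires no computation specific to $X_n$: if $X_n$ admits FR, then it admits PGFR. Indeed, if there exists $t \in \mathbb{R}^+$ with $H(t)\mathbf{e}_a = \alpha\mathbf{e}_a + \beta\mathbf{e}_b$ where $\beta \neq 0$ and $|\alpha|^2 + |\beta|^2 = 1$, then the constant sequence $t_k = t$ satisfies $\lim_{k\to\infty} H(t_k)\mathbf{e}_a = \alpha\mathbf{e}_a + \beta\mathbf{e}_b$, which is exactly the definition of PGFR from $a$ to $b$. Thus FR is a special case of PGFR, and this implication holds for every graph.

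For the converse I would invoke Theorem~\ref{unitary Cayley graph 7}: if $X_n$ admits PGFR, then $n=2$ or $n=2p$ for some prime $p$. It then suffices to confirm that $X_n$ admits FR for each such $n$. The graph $X_2$ admits FR (trivially, indeed PST), the graph $X_4$ admits FR as noted earlier and corresponds to $n=2p$ with $p=2$, and for $n=2p$ with $p$ an odd prime, Theorem~\ref{UCG FR} already establishes that $X_n$ admits FR. Hence whenever $X_n$ admits PGFR, it admits FR. Combining the two directions yields that $X_n$ admits FR if and only if it admits PGFR, as claimed.

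There is essentially no genuine obstacle here, since all the substantive work was carried out in the preceding lemmas and in Theorems~\ref{unitary Cayley graph 7} and~\ref{UCG FR}; the only point demanding care is the bookkeeping of the boundary case $p=2$ (that is, $n=4$), so that the parameterization $n=2p$ appearing in the PGFR classification and in the FR theorem are reconciled. This is handled simply by treating $n=2$ and $n=4$ explicitly alongside the family $n=2p$ with $p$ odd, after which the two classifications visibly coincide.
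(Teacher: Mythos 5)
Your proposal is correct and matches the paper's argument exactly: the paper also obtains this theorem by observing that FR is trivially a special case of PGFR and that the two classifications ($n=2$ or $n=2p$ for a prime $p$, with $n=4$ covered as $p=2$ and Theorem~\ref{UCG FR} covering odd $p$) coincide. No differences worth noting.
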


\section{Periodicity and PST on quadratic unitary Cayley graphs}\label{sec 4}
In this section, we first determine the periodic quadratic unitary Cayley graphs. As a consequence, we obtain quadratic unitary Cayley graphs admitting PST. It follows from Godsil~\cite{State transfer on graphs} [see Theorem 5.2] that a quadratic unitary Cayley graph $G_n$ is periodic if and only if it is integral. In the next few lemmas, we determine all $G_n$ that are integral.  

\begin{lema}\label{2^h}
Let $h$ be a positive integer. Then the graph $G_{2^h}$ is integral if and only if $h = 1$ or $h = 2$.
\end{lema}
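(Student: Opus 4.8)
The plan is to read the spectrum of $G_{2^h}$ directly off Theorem~\ref{theorem canonical factorization}, since $2^h$ is a single prime power, and then test integrality branch by branch. The first step is to decide which branch of Theorem~\ref{theorem canonical factorization} applies, which amounts to deciding whether $-1 \in Q_{2^h}$. For $h=1$ one has $Q_2=\{1\}$ and $-1\equiv 1\Mod{2}$, so $-1\in Q_2$ and part (i) applies; for every $h\geq 2$, Lemma~\ref{lemma 1.2} gives $-1\notin Q_{2^h}$, so part (ii) applies with $\ell=1$ and $t=0$. With $t=0$ the leading product in part (ii) is empty, so each eigenvalue of $G_{2^h}$ (for $h\geq 2$) has the form $\chi_a(Q_{2^h})+\overline{\chi_a(Q_{2^h})}=2\,\mathrm{Re}\,\chi_a(Q_{2^h})$ for $a\in{\mathbb Z}_{2^h}$.

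For the sufficiency direction I would simply evaluate these expressions using Theorem~\ref{theorem character prime power}(i). For $h=1$ the eigenvalues are $\chi_a(Q_2)=\cos(a\pi)\in\{1,-1\}$, and for $h=2$ they are $2\,\mathrm{Re}\,\exp(-\tfrac{a\pi\textbf{i}}{2})=2\cos(\tfrac{a\pi}{2})\in\{0,2,-2\}$ as $a$ ranges over ${\mathbb Z}_4$. In both cases every eigenvalue is an integer, so $G_2$ and $G_4$ are integral.

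The necessity direction is where the real content lies: for $h\geq 3$ I must produce a single non-integer eigenvalue. Taking $k=h>2$ in Theorem~\ref{theorem character prime power}(i) and choosing $a=2^{h-3}$, which lies in the first listed set $\{2^{h-3},3\cdot2^{h-3},5\cdot2^{h-3},7\cdot2^{h-3}\}$, gives $\chi_a(Q_{2^h})=2^{h-3}\exp(-\tfrac{\pi\textbf{i}}{4})$, whence the corresponding eigenvalue is
$$\chi_a(Q_{2^h})+\overline{\chi_a(Q_{2^h})}=2^{h-3}\cdot 2\cos\tfrac{\pi}{4}=2^{h-3}\sqrt{2}.$$
Since $2^{h-3}$ is a positive integer for $h\geq 3$, this number is irrational, so $G_{2^h}$ is not integral.

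The main obstacle is spotting the right index $a$: among the three cases of Theorem~\ref{theorem character prime power}(i), the ``otherwise $=0$'' branch and the $(-\textbf{i})^{a/2^{k-2}}$ branch always contribute rational (indeed even integer) values after taking twice the real part, so integrality can only fail through the first branch, and one must verify that the phase there genuinely produces a $\sqrt{2}$ rather than collapsing to a rational. Once $a=2^{h-3}$ is identified the computation is immediate; the only bookkeeping point is the empty-product convention when $t=0$, which I would state explicitly.
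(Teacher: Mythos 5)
Your proposal is correct and follows essentially the same route as the paper: the decisive step in both is to take $a=2^{h-3}$ for $h>2$ and read off the irrational eigenvalue $\chi_a(Q_{2^h})+\overline{\chi_a(Q_{2^h})}=2^{h-3}\sqrt{2}$ (the paper writes this as $\tfrac{2^h}{4\sqrt{2}}$, the same number) from Theorem~\ref{theorem character prime power}. The only cosmetic difference is that the paper disposes of $h=1,2$ by identifying $G_2$ and $G_4$ as the path and the $4$-cycle rather than computing the character sums.
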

\begin{proof}
The graph $G_2$ is the path on two vertices and the graph $G_4$ is the cycle on four vertices. Therefore, $G_2$ and $G_4$ are integral. Now consider the case that $h > 2$. We consider $a \in {\mathbb{Z}}_{2^h}$ such that $a=2^{h-3}$. Applying Lemma~\ref{lemma 1.2}, Theorem~\ref{theorem character prime power} and Theorem~\ref{theorem canonical factorization}, we obtain that
$$\lambda_a=\chi_a(Q_{2^h})+\overline{\chi_a(Q_{2^h})}=\frac{2^h}{4 \sqrt{2}}.$$
Therefore, the graph $G_{2^h}$ is not integral. This completes the proof. 
\end{proof}

\begin{lema}\label{p^s}
Let $p$ be a prime such that $p \equiv 3~(\mathrm{mod}~4)$ and $s$ be a positive integer. Then $G_{p^s}$ is integral.
\end{lema}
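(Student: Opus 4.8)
The plan is to reduce the problem to a single application of the eigenvalue formula in Theorem~\ref{theorem canonical factorization} and then to extract the real parts of the relevant character sums. First I would observe that, since $p \equiv 3~(\mathrm{mod}~4)$, the congruence $x^2 \equiv -1~(\mathrm{mod}~p)$ has no solution, so $-1$ is a quadratic non-residue of $p$, i.e. $(-1/p) = -1$. By the lemma on solvability of $x^2 \equiv a~(\mathrm{mod}~p^s)$, this gives $-1 \notin Q_{p^s}$. Hence for $n = p^s$ the canonical factorization consists of the single prime power $p^s$ with $-1 \notin Q_{p^s}$, so in the notation of Theorem~\ref{theorem canonical factorization} we are in case (ii) with $\ell = 1$ and $t = 0$. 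The empty product of the first $t$ factors equals $1$, and the eigenvalues of $G_{p^s}$ reduce to
\begin{align*}
\lambda_a = \chi_a\!\left(Q_{p^s}\right) + \overline{\chi_a\!\left(Q_{p^s}\right)} = 2\,\mathrm{Re}\!\left(\chi_a\!\left(Q_{p^s}\right)\right), \qquad a \in {\mathbb{Z}}_{p^s}.
\end{align*}
Thus it suffices to show that $\mathrm{Re}\!\left(\chi_a(Q_{p^s})\right)$ is a half-integer for every $a$.

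Next I would evaluate these character sums using Theorem~\ref{theorem character prime power}(ii), splitting into its three cases. When $a = 0$ the sum equals $\tfrac{1}{2}p^{s-1}(p-1)$, which is real, giving $\lambda_0 = p^{s-1}(p-1)$; when $a \notin p^{s-1}{\mathbb{Z}}_p$ the sum is $0$, giving $\lambda_a = 0$. The only delicate case is $a \in p^{s-1}{\mathbb{Z}}_p \setminus \{0\}$, and here lies the crux of the argument. Because $p \equiv 3~(\mathrm{mod}~4)$, we have $\mathbf{i}^{\,p} = -\mathbf{i}$, so the factor $\frac{1 + \mathbf{i}^{\,p}}{1 + \mathbf{i}}$ simplifies to $\frac{1 - \mathbf{i}}{1 + \mathbf{i}} = -\mathbf{i}$, which is purely imaginary. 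Substituting this into the formula shows that the $\sqrt{p}$ Legendre-symbol term contributes only to the imaginary part, and the real part of $\chi_a(Q_{p^s})$ collapses to $-\tfrac{1}{2}p^{s-1}$, so $\lambda_a = -p^{s-1}$.

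Collecting the three cases gives $\lambda_a \in \{\, p^{s-1}(p-1),\ -p^{s-1},\ 0 \,\}$ for all $a$, every value being an integer, whence $G_{p^s}$ is integral. The step I expect to be the essential one is the simplification $\frac{1 + \mathbf{i}^{\,p}}{1+\mathbf{i}} = -\mathbf{i}$: it is precisely the condition $p \equiv 3~(\mathrm{mod}~4)$ that makes this quantity purely imaginary, so that forming $2\,\mathrm{Re}(\cdot)$ annihilates the irrational $\sqrt{p}$ contribution. (By contrast, for $p \equiv 1~(\mathrm{mod}~4)$ one has $\mathbf{i}^{\,p} = \mathbf{i}$ and the ratio equals $1$, a real number, so the $\sqrt{p}$ term survives in the real part and the eigenvalues become irrational — which is consistent with $G_{p^s}$ failing to be integral in that regime.) Everything else is routine bookkeeping over the cases of the character-sum formula.
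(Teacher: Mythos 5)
Your proposal is correct and follows essentially the same route as the paper: the paper's proof likewise applies Theorem~\ref{theorem canonical factorization}(ii) with $t=0$ to get $\lambda_a=\chi_a(Q_{p^s})+\overline{\chi_a(Q_{p^s})}$ and reads off the three values $p^{s-1}(p-1)$, $-p^{s-1}$, and $0$ from Theorem~\ref{theorem character prime power}(ii). You have merely made explicit the details the paper leaves implicit, in particular the key simplification $\frac{1+\mathbf{i}^{\,p}}{1+\mathbf{i}}=-\mathbf{i}$ that kills the $\sqrt{p}$ term in the real part.
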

\begin{proof}
We have $\lambda_a=\chi_a(Q_{p^s})+\overline{\chi_a(Q_{p^s})}$ for $a \in {\mathbb{Z}}_{p^s}$. This implies that
\[{\lambda}_a=\left\{ \begin{array}{ll} p^{s-1}(p-1) & \textrm{ if } a=0\\
 -p^{s-1} & \textrm{ if } a \in p^{s-1} {\mathbb{Z}}_p \setminus \{0\}~\\
 0 & \textrm{ otherwise. } \end{array}\right. \]
Thus it is clear that the graph $G_{p^s}$ is integral.
\end{proof}

The proof of the next lemma is similar to that of Lemma~\ref{p^s}, and hence the details are omitted.
\begin{lema}\label{2p^s}
Let $p$ be a prime such that $p \equiv 3~(\mathrm{mod}~4)$ and $s$ be a positive integer. Then $G_{2p^s}$ is integral.
\end{lema}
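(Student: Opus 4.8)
The plan is to mirror the argument of Lemma~\ref{p^s}, exploiting the multiplicative description of the eigenvalues furnished by Theorem~\ref{theorem canonical factorization}. First I would record the canonical factorization $n=2p^s=2^1\cdot p^s$, so that the two prime-power factors are $2$ and $p^s$. The decisive preliminary step is to sort these factors according to whether $-1$ lies in the corresponding quadratic-residue set. Since $U(2)=\{1\}$ we have $Q_2=\{1\}$ and $-1\equiv 1\Mod{2}$, so $-1\in Q_2$; note that this is exactly the case $k=1$ excluded from Lemma~\ref{lemma 1.2}, and it is what makes $2p^s$ behave differently from $2^hp^s$. On the other hand, because $p\equiv 3\Mod{4}$ the Legendre symbol satisfies $(-1/p)=-1$, and the Burton criterion that $x^2\equiv a\Mod{p^s}$ is solvable precisely when $(a/p)=1$ gives $-1\notin Q_{p^s}$. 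Hence, in the notation of Theorem~\ref{theorem canonical factorization}, I may order the factors as $p_1^{k_1}=2$ and $p_2^{k_2}=p^s$, so that $t=1<2=\ell$, placing us squarely in case~(ii).

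With this ordering fixed, every eigenvalue of $G_{2p^s}$ has the shape
\[
\lambda=\chi_{a_1}(Q_2)\left[\chi_{a_2}(Q_{p^s})+\overline{\chi_{a_2}(Q_{p^s})}\right],\qquad a_1\in\mathbb{Z}_2,\ a_2\in\mathbb{Z}_{p^s}.
\]
I would then evaluate the two factors separately. By Theorem~\ref{theorem character prime power}(i), $\chi_{a_1}(Q_2)=\cos(a_1\pi)$, which equals $1$ when $a_1=0$ and $-1$ when $a_1=1$; in particular it is always an integer of unit modulus. For the bracketed factor I would reuse verbatim the computation already carried out in the proof of Lemma~\ref{p^s}: because $p\equiv 3\Mod{4}$, the Gauss-type expression $\sqrt{p}\,\tfrac{1+\mathbf{i}^p}{1+\mathbf{i}}$ collapses to $-\sqrt{p}\,\mathbf{i}$, so that the real part of $\chi_{a_2}(Q_{p^s})$ is rational and the conjugate-symmetrized bracket yields
\[
\chi_{a_2}(Q_{p^s})+\overline{\chi_{a_2}(Q_{p^s})}\in\left\{\, p^{s-1}(p-1),\ -p^{s-1},\ 0 \,\right\},
\]
a set of integers.

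Combining the two displays, each eigenvalue $\lambda$ is the product of $\pm 1$ with an integer drawn from $\{p^{s-1}(p-1),-p^{s-1},0\}$, hence is itself an integer, which is exactly the assertion that $G_{2p^s}$ is integral. The only genuine subtlety — and the reason the claim is not completely automatic from Lemma~\ref{p^s} — is that the factorization theorem now places the $p^s$-factor inside the conjugate-symmetrized bracket of case~(ii) rather than as a bare character sum as in Lemma~\ref{p^s}. I expect the main point to verify is precisely that this bracket is the same real-doubling that appeared there, so that the irrational contribution $\sqrt{p}\,\mathbf{i}$ cancels and integrality is preserved. Once that identification is made, the extra rational prefactor $\chi_{a_1}(Q_2)=\pm 1$ introduces nothing new, and the proof closes.
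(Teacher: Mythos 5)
Your argument is correct and is exactly the route the paper intends: it states only that the proof is ``similar to that of Lemma~\ref{p^s}'' and omits the details, which you have supplied faithfully --- sorting the factors $2$ and $p^s$ by whether $-1$ lies in the quadratic-residue set, landing in case~(ii) of Theorem~\ref{theorem canonical factorization}, and observing that the $\pm 1$ prefactor $\chi_{a_1}(Q_2)$ multiplies the same integer bracket $\{p^{s-1}(p-1),\,-p^{s-1},\,0\}$ computed in Lemma~\ref{p^s}. No gaps; the verification that $(1+\mathbf{i}^p)/(1+\mathbf{i})=-\mathbf{i}$ for $p\equiv 3\Mod{4}$ and the resulting cancellation of the $\sqrt{p}$ term are exactly the decisive points.
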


\begin{lema}\label{odd, only prime 4k+3}
Let $k$ be an integer such that $k > 1$. Also, let $p_j$ be a prime with $p_j \equiv 3~(\mathrm{mod}~4)$ and $s_j$ be a positive integer for $1 \leq j \leq k$. If $n={p_1}^{s_1} \cdots {p_k}^{s_k}$, then $G_n$ is non-integral.
\end{lema}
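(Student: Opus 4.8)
The plan is to produce a single irrational eigenvalue of $G_n$; since an integral graph has only integer eigenvalues, exhibiting one irrational eigenvalue forces $G_n$ to be non-integral. Because every $p_j \equiv 3~(\mathrm{mod}~4)$, the congruence $x^2 \equiv -1~(\mathrm{mod}~{p_j}^{s_j})$ has no solution, so $-1 \notin Q_{{p_j}^{s_j}}$ for each $j$. Thus in the notation of Theorem~\ref{theorem canonical factorization} we are in the case $t=0 < \ell = k$, and every eigenvalue of $G_n$ equals $z + \overline{z} = 2\,\mathrm{Re}(z)$, where $z = \prod_{j=1}^{k}\chi_{a_j}\!\left(Q_{{p_j}^{s_j}}\right)$ for some $a_j \in {\mathbb{Z}}_{{p_j}^{s_j}}$.

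First I would record the two character values that matter. Using Theorem~\ref{theorem character prime power}(ii) together with the identity $\frac{1+\textbf{i}^{p}}{1+\textbf{i}} = -\textbf{i}$, valid because $\textbf{i}^{p} = -\textbf{i}$ when $p \equiv 3~(\mathrm{mod}~4)$, one obtains $\chi_0\!\left(Q_{p^s}\right) = \frac{p^{s-1}(p-1)}{2}$, a positive integer, and, for $a \in p^{s-1}{\mathbb{Z}}_p \setminus \{0\}$,
$$\chi_a\!\left(Q_{p^s}\right) = -\frac{p^{s-1}}{2}\left(1 + \varepsilon\,\textbf{i}\sqrt{p}\right),$$
where $\varepsilon \in \{\pm 1\}$ is the Legendre symbol of $a/p^{s-1}$ modulo $p$. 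The crucial feature is that each such value has a rational real part $-\tfrac{p^{s-1}}{2}$ but a nonzero imaginary part.

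The decisive step, and the one that uses the hypothesis $k>1$, is that a single nonzero character factor contributes only a rational real part to $z$, so I would switch on exactly two of them. Concretely, choose nonzero elements $a_1 \in p_1^{s_1-1}{\mathbb{Z}}_{p_1}\setminus\{0\}$ and $a_2 \in p_2^{s_2-1}{\mathbb{Z}}_{p_2}\setminus\{0\}$ (these sets are nonempty since $p_1,p_2 \geq 3$) and set $a_3 = \cdots = a_k = 0$. Writing $C := \prod_{j=3}^{k}\frac{p_j^{s_j-1}(p_j-1)}{2} \in {\mathbb{Q}}$ for the product of the $a=0$ factors, a short expansion of $\chi_{a_1}\chi_{a_2}$ followed by taking the real part gives the eigenvalue
$$2\,\mathrm{Re}(z) = C\cdot\frac{p_1^{s_1-1}p_2^{s_2-1}}{2}\left(1 - \varepsilon_1\varepsilon_2\sqrt{p_1 p_2}\right),$$
for suitable $\varepsilon_1,\varepsilon_2 \in \{\pm 1\}$.

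To finish, I would note that $p_1 p_2$ is not a perfect square because $p_1 \neq p_2$, so $\sqrt{p_1 p_2}$ is irrational; since the prefactor $C\cdot\frac{p_1^{s_1-1}p_2^{s_2-1}}{2}$ is a nonzero rational, the displayed value is irrational, hence non-integral, and therefore $G_n$ is non-integral. The computations are routine; the only genuine content is the simplification of $\frac{1+\textbf{i}^{p}}{1+\textbf{i}}$ and, more importantly, the observation that \emph{two} (rather than one) nonzero character factors are needed so that the cross-term $\sqrt{p_1 p_2}$ survives in the real part. This is exactly why the hypothesis $k>1$ cannot be dropped: with only one prime power one obtains the integral graphs classified in Lemma~\ref{p^s}.
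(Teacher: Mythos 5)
Your proposal is correct and follows essentially the same route as the paper: both arguments take $-1 \notin Q_{p_j^{s_j}}$ to put the eigenvalues in the form $z+\overline{z}$, activate exactly two nonzero character indices $a_1 \in p_1^{s_1-1}\mathbb{Z}_{p_1}\setminus\{0\}$, $a_2 \in p_2^{s_2-1}\mathbb{Z}_{p_2}\setminus\{0\}$ with all other $a_j=0$, and extract an eigenvalue of the form $\theta(1 \pm \sqrt{p_1p_2})$ with $\theta \in \mathbb{Q}\setminus\{0\}$. The only cosmetic differences are that the paper fixes the two Legendre symbols to be $+1$ and $-1$ and separates the cases $k=2$ and $k>2$, whereas you allow arbitrary signs $\varepsilon_1,\varepsilon_2$ and treat all $k>1$ uniformly via the empty-product convention for $C$.
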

\begin{proof}
We consider the following two cases.

\noindent \textbf{Case 1.} For $k=2$, we write ${\mathbb{Z}}_n={\mathbb{Z}}_{{p_1}^{s_1}} \times {\mathbb{Z}}_{{p_2}^{s_2}}$. Then we have  
$$\lambda_{(a_1, a_2)}=\chi_{a_1} \left(Q_{{p_1}^{s_1}} \right) \chi_{a_2}\left(Q_{{p_2}^{s_2}} \right)+\overline {\chi_{a_1}\left(Q_{{p_1}^{s_1}} \right)} \overline{\chi_{a_2}\left(Q_{{p_2}^{s_2}} \right)},$$
where $a_1 \in {\mathbb{Z}}_{{p_1}^{s_1}}$ and $a_2 \in {\mathbb{Z}}_{{p_2}^{s_2}}$. Consider $a_1 \in {\mathbb{Z}}_{{p_1}^{s_1}} \setminus \{0\}$ with $\left(\frac{a_1}{{p_1}^{s_1-1}}/p_1 \right)=1$ and $a_2 \in {\mathbb{Z}}_{{p_2}^{s_2}} \setminus \{0\}$ with $\left(\frac{a_2}{{p_2}^{s_2-1}}/p_2 \right)=-1$. Then    
$$\lambda_{(a_1, a_2)}=\theta_1+\theta_1 \sqrt{p_1 p_2},~~~~\mathrm{for~some}~\theta_1 \in \mathbb{Q} \setminus \{0\}.$$

\noindent \textbf{Case 2.} For $k > 2$, we write ${\mathbb{Z}}_n={\mathbb{Z}}_{{p_1}^{s_1}} \times \cdots \times {\mathbb{Z}}_{{p_k}^{s_k}}$. Then we have  
$$\lambda_{(a_1, \ldots, a_k)}=\chi_{a_1} \left(Q_{{p_1}^{s_1}} \right) \cdots \chi_{a_k}\left(Q_{{p_k}^{s_k}} \right)+\overline {\chi_{a_1}\left(Q_{{p_1}^{s_1}} \right)} \cdots \overline{\chi_{a_k}\left(Q_{{p_k}^{s_k}} \right)},$$
where $a_j \in {\mathbb{Z}}_{{p_j}^{s_j}}$ for $1 \leq j \leq k$. Consider $a_1 \in {\mathbb{Z}}_{{p_1}^{s_1}} \setminus \{0\}$ such that $\left(\frac{a_1}{{p_1}^{s_1-1}}/p_1 \right)=1$, $a_2 \in {\mathbb{Z}}_{{p_2}^{s_2}} \setminus \{0\}$ such that $\left(\frac{a_2}{{p_2}^{s_2-1}}/p_2 \right)=-1$ and $a_3=\cdots=a_k=0$. Then 
$$\lambda_{(a_1, \ldots, a_k)}=\theta_2+\theta_2 \sqrt{p_1 p_2},~~~~\mathrm{for~some}~\theta_2 \in \mathbb{Q} \setminus \{0\}.$$ 
Therefore, the graph $G_n$ is non-integral.
\end{proof}

The proof of the next lemma follows similarly to that of Lemma~\ref{odd, only prime 4k+3}, and hence the details are omitted.
\begin{lema}
Let $k$ be an integer such that $k > 1$. Also, let $p_j$ be a prime with $p_j \equiv 3~(\mathrm{mod}~4)$ and $s_j$ be a positive integer for $1 \leq j \leq k$. If $n=2{p_1}^{s_1} \cdots {p_k}^{s_k}$, then $G_n$ is non-integral.
\end{lema}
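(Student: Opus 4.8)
The plan is to invoke the eigenvalue formula of Theorem~\ref{theorem canonical factorization} for the factorization $n = 2 \cdot {p_1}^{s_1} \cdots {p_k}^{s_k}$ and then reduce the problem word-for-word to the computation already carried out in Lemma~\ref{odd, only prime 4k+3}. The first step is to sort the prime factors of $n$ according to whether $-1$ lies in the relevant square set. Each $p_j \equiv 3 \pmod{4}$ forces $-1 \notin Q_{{p_j}^{s_j}}$, whereas for the prime $2$ one computes $U(2) = \{1\}$, so $Q_2 = \{1\}$ and $-1 \equiv 1 \in Q_2$. Hence, in the notation of Theorem~\ref{theorem canonical factorization}, the prime $2$ is the unique prime lying in the ``$t$''-block, giving $t = 1 < k+1 = \ell$, so we are in case (ii).

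Next I would write a general eigenvalue of $G_n$, with the prime $2$ placed as the leading factor, in the form
\[
\lambda = \chi_b(Q_2)\left[\prod_{j=1}^{k}\chi_{a_j}\left(Q_{{p_j}^{s_j}}\right) + \prod_{j=1}^{k}\overline{\chi_{a_j}\left(Q_{{p_j}^{s_j}}\right)}\right], \qquad b \in {\mathbb{Z}}_2,\ a_j \in {\mathbb{Z}}_{{p_j}^{s_j}}.
\]
The key observation is that choosing $b = 0$ makes $\chi_0(Q_2) = \cos(0) = 1$ by Theorem~\ref{theorem character prime power}(i), so the factor contributed by the prime $2$ is trivial and $\lambda$ collapses to exactly the bracketed expression appearing in Lemma~\ref{odd, only prime 4k+3}.

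With the prime-$2$ coordinate fixed at $b = 0$, I would then copy the choice of $a_1, \ldots, a_k$ from that lemma verbatim: take $a_1 \in {\mathbb{Z}}_{{p_1}^{s_1}} \setminus \{0\}$ with $\left(\tfrac{a_1}{{p_1}^{s_1-1}}/p_1\right) = 1$, take $a_2 \in {\mathbb{Z}}_{{p_2}^{s_2}} \setminus \{0\}$ with $\left(\tfrac{a_2}{{p_2}^{s_2-1}}/p_2\right) = -1$, and set $a_3 = \cdots = a_k = 0$ when $k > 2$. Since $\mathbf{i}^{p_j} = -\mathbf{i}$ for each $p_j \equiv 3 \pmod{4}$, Theorem~\ref{theorem character prime power}(ii) gives $\chi_{a_j}\left(Q_{{p_j}^{s_j}}\right) = -\tfrac{1}{2}{p_j}^{s_j-1}\bigl(1 + \mathbf{i}\varepsilon_j\sqrt{p_j}\bigr)$ with $\varepsilon_1 = 1$ and $\varepsilon_2 = -1$, while the vanishing coordinates contribute the nonzero rationals $\chi_0\left(Q_{{p_j}^{s_j}}\right) = \tfrac{1}{2}{p_j}^{s_j-1}(p_j - 1)$. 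Taking twice the real part of the product then produces an eigenvalue of the form $\theta + \theta\sqrt{p_1 p_2}$ with $\theta \in \mathbb{Q} \setminus \{0\}$, exactly as in the previous lemma. As $p_1 p_2$ is not a perfect square, $\sqrt{p_1 p_2}$ is irrational, so $\lambda$ is irrational and $G_n$ is non-integral.

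There is essentially no genuine obstacle here beyond bookkeeping: the whole content of the lemma is that the extra factor of $2$ in $n$ is harmless. The one point demanding care is the very first step, namely verifying $-1 \in Q_2$ so that the prime $2$ joins the ``$t$''-block rather than the odd primes; otherwise the conjugation pattern in the eigenvalue formula would change. Once that is settled, setting $b = 0$ reduces the statement directly to Lemma~\ref{odd, only prime 4k+3}, which is precisely why the authors omit the details.
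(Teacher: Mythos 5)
Your proposal is correct and matches the paper's intent exactly: the paper omits this proof, stating only that it is similar to Lemma~\ref{odd, only prime 4k+3}, and your argument is precisely that reduction, including the one nontrivial point that $-1 \in Q_2$ places the prime $2$ in the $t$-block of Theorem~\ref{theorem canonical factorization} and that setting the $\mathbb{Z}_2$-coordinate to $0$ makes its character factor equal to $1$.
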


\begin{lema}\label{odd, only prime 4k+1}
Let $k$ be a positive integer. Also, let $p_j$ be a prime such that $p_j \equiv 1~(\mathrm{mod}~4)$ and $s_j$ be a positive integer for $1 \leq j \leq k$. If $n={p_1}^{s_1} \cdots {p_k}^{s_k}$, then $G_n$ is non-integral.
\end{lema}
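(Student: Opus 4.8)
The plan is to exhibit a single irrational eigenvalue of $G_n$, which immediately forces non-integrality. First I would record that since each $p_j \equiv 1~(\mathrm{mod}~4)$, the congruence $x^2 \equiv -1~(\mathrm{mod}~p_j)$ is solvable, and hence (by the lemma on solvability of $x^2 \equiv a~(\mathrm{mod}~p^s)$ together with $(-1/p_j)=1$) we have $-1 \in Q_{{p_j}^{s_j}}$ for every $j$. Thus in the notation of Theorem~\ref{theorem canonical factorization} we are in case~(i) with $t=\ell=k$, so the eigenvalues of $G_n$ are precisely the products $\chi_{a_1}(Q_{{p_1}^{s_1}}) \cdots \chi_{a_k}(Q_{{p_k}^{s_k}})$ with $a_j \in {\mathbb{Z}}_{{p_j}^{s_j}}$.

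Next I would simplify the relevant character sums using Theorem~\ref{theorem character prime power}(ii). The key observation is that for $p \equiv 1~(\mathrm{mod}~4)$ one has ${\mathbf{i}}^p = \mathbf{i}$, so $\frac{1+{\mathbf{i}}^p}{1+\mathbf{i}} = 1$; consequently, for $a \in p^{s-1}{\mathbb{Z}}_p \setminus \{0\}$ the character sum
$$\chi_a(Q_{p^s}) = \tfrac{1}{2}p^{s-1}\left[\sqrt{p}\left(\tfrac{a}{p^{s-1}}/p\right) - 1\right]$$
is a real number of the form $c + d\sqrt{p}$ with $c, d \in \mathbb{Q}$ and $d \neq 0$, hence irrational, whereas $\chi_0(Q_{p^s}) = \frac{p^{s-1}(p-1)}{2}$ is a positive integer.

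Finally I would build the witness eigenvalue. Choose $a_1 \in {p_1}^{s_1-1}{\mathbb{Z}}_{p_1} \setminus \{0\}$ (this set is nonempty because $p_1 \geq 5$) and set $a_2 = \cdots = a_k = 0$. Then the corresponding eigenvalue equals $\chi_{a_1}(Q_{{p_1}^{s_1}}) \prod_{j=2}^{k} \frac{{p_j}^{s_j-1}(p_j-1)}{2}$, which is a nonzero rational multiple of $\sqrt{p_1}\left(\frac{a_1}{{p_1}^{s_1-1}}/p_1\right) - 1$ and is therefore irrational; this argument also covers $k=1$, where the product is empty. Hence $G_n$ has a non-integer eigenvalue and is non-integral. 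The computation is routine; the only points requiring care are verifying that the hypothesis $-1 \in Q_{{p_j}^{s_j}}$ places us in case~(i)—so that the eigenvalues are bare products of character sums rather than products of sums with their conjugates, which is exactly what prevents the $\sqrt{p_1}$ term from cancelling—and the evaluation ${\mathbf{i}}^p = \mathbf{i}$ that keeps $\chi_{a_1}(Q_{{p_1}^{s_1}})$ real but irrational. I expect this contrast with the $p \equiv 3~(\mathrm{mod}~4)$ analysis of Lemma~\ref{odd, only prime 4k+3} to be the main conceptual step rather than a genuine obstacle.
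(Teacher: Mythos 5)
Your proposal is correct and follows essentially the same route as the paper: both exhibit the eigenvalue indexed by a nonzero $a_1 \in {p_1}^{s_1-1}{\mathbb{Z}}_{p_1}$ and $a_2=\cdots=a_k=0$, which by Theorem~\ref{theorem character prime power}(ii) is a nonzero rational multiple of $\pm\sqrt{p_1}-1$ and hence irrational. Your explicit verification that $-1\in Q_{{p_j}^{s_j}}$ places you in case~(i) of Theorem~\ref{theorem canonical factorization} is a detail the paper leaves implicit, and your uniform treatment of $k=1$ via the empty product replaces the paper's two-case split, but these are cosmetic differences.
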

\begin{proof}
We consider the following two cases.

\noindent \textbf{Case 1.} For $k=1$, we have $\lambda_{a_1}=\chi_{a_1}(Q_{{p_1}^{s_1}})$, where $a_1 \in {\mathbb{Z}}_{{p_1}^{s_1}}$. Consider $a_1 \in {\mathbb{Z}}_{{p_1}^{s_1}} \setminus \{0\}$ such that $\left(\frac{a_1}{{p_1}^{s_1-1}}/p_1 \right)=1$. Then 
$$ \lambda_{a_1}=\frac{{p_1}^{s_1}}{2} \left (-1+\sqrt{p_1} \right).$$

\noindent \textbf{Case 2.} For $k > 1$, we write ${\mathbb{Z}}_n={\mathbb{Z}}_{{p_1}^{s_1}} \times \cdots \times {\mathbb{Z}}_{{p_k}^{s_k}}$. Then we have 
$$\lambda_{(a_1, \ldots, a_k)}=\chi_{a_1}(Q_{{p_1}^{s_1}})\cdots \chi_{a_k}(Q_{{p_k}^{s_k}}),$$
where $a_j \in {\mathbb{Z}}_{{p_j}^{s_j}}$ for $1 \leq j \leq k$. Consider $a_1 \in {\mathbb{Z}}_{{p_1}^{s_1}} \setminus \{0\}$ such that $\left(\frac{a_1}{{p_1}^{s_1-1}}/p_1 \right)=1$ and $a_j=0$ for $2 \leq j \leq k$. Then
$$\lambda_{(a_1, \ldots, a_k)}=-\theta+\theta \sqrt{p_1}~~~\mathrm{for~some}~\theta \in \mathbb{Q} \setminus \{0\}.$$
Thus, the graph $G_n$ is non-integral. 
\end{proof}
The proof of the following lemma is similar to that of Lemma~\ref{odd, only prime 4k+1}.
\begin{lema}
Let $k$ be a positive integer. Also, let $p_j$ be a prime such that $p_j \equiv 1~(\mathrm{mod}~4)$ and $s_j$ be a positive integer for $1 \leq j \leq k$. If $n=2{p_1}^{s_1} \cdots {p_k}^{s_k}$, then $G_n$ is non-integral.
\end{lema}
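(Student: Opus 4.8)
The plan is to reduce to case~(i) of Theorem~\ref{theorem canonical factorization} and then exhibit a single irrational eigenvalue, exactly as in Lemma~\ref{odd, only prime 4k+1}, the only new ingredient being the prime-power factor $2$. First I would check that $-1$ lies in the quadratic residue set of \emph{every} prime-power divisor of $n$. For the factor $2 = 2^1$ we have $U(2) = \{1\}$, so $Q_2 = \{1\}$, and since $-1 \equiv 1 \pmod 2$ we get $-1 \in Q_2$. For each odd factor $p_j^{s_j}$, the hypothesis $p_j \equiv 1 \pmod 4$ makes $x^2 \equiv -1 \pmod{p_j}$ solvable, so $(-1/p_j) = 1$ and the lifting lemma guarantees $-1 \in Q_{p_j^{s_j}}$. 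Hence, in the notation of Theorem~\ref{theorem canonical factorization}, we have $t = \ell = k+1$, and the eigenvalues of $G_n$ are the plain products of character sums.

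Writing ${\mathbb{Z}}_n = {\mathbb{Z}}_2 \times {\mathbb{Z}}_{p_1^{s_1}} \times \cdots \times {\mathbb{Z}}_{p_k^{s_k}}$, the eigenvalue indexed by $(a_0, a_1, \ldots, a_k)$ is
\[
\lambda_{(a_0, a_1, \ldots, a_k)} = \chi_{a_0}(Q_2) \prod_{j=1}^{k} \chi_{a_j}\!\left(Q_{p_j^{s_j}}\right).
\]
Mirroring Case~2 of Lemma~\ref{odd, only prime 4k+1}, I would set $a_0 = 0$, choose $a_1 \in {\mathbb{Z}}_{p_1^{s_1}} \setminus \{0\}$ with $\left(\frac{a_1}{p_1^{s_1-1}}/p_1\right) = 1$, and put $a_j = 0$ for $2 \le j \le k$. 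Then $\chi_0(Q_2) = \cos 0 = 1$, and each $\chi_0(Q_{p_j^{s_j}}) = \frac{p_j^{s_j-1}(p_j - 1)}{2}$ is a positive rational, so these factors contribute nothing irrational.

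The irrationality is carried by $\chi_{a_1}(Q_{p_1^{s_1}})$. Here I would use $p_1 \equiv 1 \pmod 4$, which gives $\textbf{i}^{p_1} = \textbf{i}$ and hence $\frac{1 + \textbf{i}^{p_1}}{1 + \textbf{i}} = 1$; Theorem~\ref{theorem character prime power}(ii) then yields that $\chi_{a_1}(Q_{p_1^{s_1}})$ is a nonzero rational multiple of $(\sqrt{p_1} - 1)$. Multiplying all factors, the chosen eigenvalue takes the form $-\theta + \theta \sqrt{p_1}$ for some $\theta \in \mathbb{Q} \setminus \{0\}$, which is irrational because $\sqrt{p_1}$ is. Therefore $G_n$ has a non-integer eigenvalue and is non-integral.

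The only genuinely new point compared with Lemma~\ref{odd, only prime 4k+1} --- and thus the place that needs care --- is verifying that the factor $2$ falls under case~(i) rather than case~(ii) of Theorem~\ref{theorem canonical factorization}; once $-1 \in Q_2$ is established, the extra character sum $\chi_{a_0}(Q_2)$ contributes the harmless scalar $1$, and the argument is otherwise identical to the odd case. I do not expect any real obstacle beyond this bookkeeping.
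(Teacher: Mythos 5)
Your proposal is correct and follows essentially the same route the paper intends: the paper omits this proof, stating only that it is similar to the all-odd case, and your argument is exactly that adaptation, with the eigenvalue $-\theta+\theta\sqrt{p_1}$ obtained from the same choice of indices. Your explicit verification that $-1\in Q_2$ (so the factor $2$ falls under case~(i) of the eigenvalue theorem and contributes the scalar $\chi_0(Q_2)=1$) is precisely the bookkeeping the paper leaves implicit, and it is handled correctly.
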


\begin{lema}\label{odd, prime 4k+1 and prime 4k+3}
Let $k$ and $t$ be positive integers such that $1 \leq t < k$. Let $p_j$ be a prime with $p_j \equiv 1~(\mathrm{mod}~4)$ for $1 \leq j \leq t$, $p_j \equiv 3~(\mathrm{mod}~4)$ for $t+1 \leq j \leq k$ and $s_j$ be a positive integer for $1 \leq j \leq k$. If $n={p_1}^{s_1} \cdots {p_t}^{s_t} {p_{t+1}}^{s_{t+1}} \cdots {p_k}^{s_k}$, then $G_n$ is non-integral.
\end{lema}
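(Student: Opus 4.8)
The plan is to apply Theorem~\ref{theorem canonical factorization}(ii) after sorting the prime powers by whether $-1$ is a quadratic residue. Since $x^2 \equiv -1 \Mod{p}$ is solvable exactly when $p \equiv 1 \Mod 4$, we have $-1 \in Q_{{p_j}^{s_j}}$ for $1 \le j \le t$ and $-1 \notin Q_{{p_j}^{s_j}}$ for $t+1 \le j \le k$, so the hypothesis $1 \le t < k$ places us squarely in case (ii). Hence the eigenvalues of $G_n$ take the form
\[
\lambda_{(a_1,\dots,a_k)} = P\bigl[C + \overline{C}\bigr],
\]
where $P = \prod_{j=1}^{t}\chi_{a_j}\bigl(Q_{{p_j}^{s_j}}\bigr)$ runs over the primes $\equiv 1 \Mod 4$ and $C = \prod_{j=t+1}^{k}\chi_{a_j}\bigl(Q_{{p_j}^{s_j}}\bigr)$ runs over the primes $\equiv 3 \Mod 4$, with $a_j \in \Zl_{{p_j}^{s_j}}$.

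As in Lemmas~\ref{odd, only prime 4k+3} and~\ref{odd, only prime 4k+1}, the strategy is to exhibit a single irrational eigenvalue by making all but one coordinate trivial. Concretely, I would take $a_1 = {p_1}^{s_1-1}$, so that $\left(\frac{a_1}{{p_1}^{s_1-1}}/p_1\right) = (1/p_1) = 1$, and set $a_j = 0$ for every $j \ge 2$. For the primes $p_j \equiv 3 \Mod 4$ (all with $a_j = 0$) and for the remaining primes $p_j \equiv 1 \Mod 4$ with $2 \le j \le t$, Theorem~\ref{theorem character prime power}(ii) gives $\chi_0\bigl(Q_{{p_j}^{s_j}}\bigr) = \tfrac{1}{2}{p_j}^{s_j-1}(p_j-1)$, a strictly positive rational. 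In particular $C$ is a positive rational, so $C + \overline{C} = 2C$ is a positive rational, and the only possibly irrational factor in the eigenvalue is $\chi_{a_1}\bigl(Q_{{p_1}^{s_1}}\bigr)$.

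Since $p_1 \equiv 1 \Mod 4$, we have $\mathbf{i}^{p_1} = \mathbf{i}$ and hence $\frac{1+\mathbf{i}^{p_1}}{1+\mathbf{i}} = 1$; combined with the Legendre symbol value $1$, Theorem~\ref{theorem character prime power}(ii) yields $\chi_{a_1}\bigl(Q_{{p_1}^{s_1}}\bigr) = \tfrac{1}{2}{p_1}^{s_1-1}\bigl(\sqrt{p_1}-1\bigr)$. Multiplying this by the positive rational constant $2C\prod_{j=2}^{t}\chi_0\bigl(Q_{{p_j}^{s_j}}\bigr)$ produces an eigenvalue of the form $-\theta + \theta\sqrt{p_1}$ for some $\theta \in \mathbb{Q}\setminus\{0\}$, exactly as in Case 1 of Lemma~\ref{odd, only prime 4k+1}. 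As $\sqrt{p_1}$ is irrational, this eigenvalue is irrational, and therefore $G_n$ is non-integral.

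The computation is routine; the only point requiring care is confirming that the rational prefactor $\theta$ does not vanish, so that the $\sqrt{p_1}$ term genuinely survives and is not cancelled. This is immediate here, since each character sum $\chi_0\bigl(Q_{{p_j}^{s_j}}\bigr) = \tfrac{1}{2}{p_j}^{s_j-1}(p_j-1)$ is strictly positive and $C > 0$ (note that $\prod_{j=2}^{t}$ is an empty product equal to $1$ when $t = 1$, and $C$ is a nonempty product because $t < k$). I therefore expect no substantive obstacle beyond organizing the residue-class bookkeeping inside Theorem~\ref{theorem canonical factorization}(ii) and invoking the $p_1 \equiv 1 \Mod 4$ simplification of $\frac{1+\mathbf{i}^{p_1}}{1+\mathbf{i}}$.
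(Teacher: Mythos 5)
Your proposal is correct and follows essentially the same route as the paper: apply Theorem~\ref{theorem canonical factorization}(ii), choose $a_1$ with $\left(\frac{a_1}{{p_1}^{s_1-1}}/p_1\right)=1$ and $a_j=0$ for $j\geq 2$, and exhibit the irrational eigenvalue $-\theta+\theta\sqrt{p_1}$ with $\theta\in\mathbb{Q}\setminus\{0\}$. Your version merely makes explicit the details the paper leaves implicit (the concrete choice $a_1={p_1}^{s_1-1}$, the simplification $\frac{1+\mathbf{i}^{p_1}}{1+\mathbf{i}}=1$, and the nonvanishing of the rational prefactor).
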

\begin{proof}
We write ${\mathbb{Z}}_n={\mathbb{Z}}_{{p_1}^{s_1}} \times \cdots \times {\mathbb{Z}}_{{p_k}^{s_k}}$. Then we have  

$$\resizebox{0.999\hsize}{!}{$\lambda_{(a_1, \ldots, a_k)}=\chi_{a_1} \left(Q_{{p_1}^{s_1}} \right) \cdots \chi_{a_{t}} \left(Q_{{p_{t}}^{s_{t}}} \right)\left[\chi_{a_{t+1}} \left(Q_{{p_{t+1}}^{s_{t+1}}} \right) \cdots \chi_{a_{\ell}}\left(Q_{{p_k}^{s_k}} \right)+\overline {\chi_{a_{t+1}}\left(Q_{{p_{t+1}}^{s_{t+1}}} \right) \cdots \chi_{a_k}\left(Q_{{p_k}^{s_k}} \right)}\right],$}$$
where $a_j \in {\mathbb{Z}}_{{p_j}^{s_j}}$ for $1 \leq j \leq k$. Consider $a_1 \in {\mathbb{Z}}_{{p_1}^{s_1}} \setminus \{0\}$ such that $\left(\frac{a_1}{{p_1}^{s_1-1}}/p_1 \right)=1$ and $a_j=0$ for $2 \leq j \leq k$. Then 
$$\lambda_{(a_1, \ldots, a_k)}=-\theta+\theta \sqrt{p_1}~~~\mathrm{for~some}~\theta \in \mathbb{Q} \setminus \{0\}.$$ 
Therefore, the graph $G_n$ is non-integral.
\end{proof}

The proof of the following lemma is similar to that of Lemma~\ref{odd, prime 4k+1 and prime 4k+3}.
\begin{lema}
Let $k$ and $t$ be positive integers such that $1 \leq t < k$. Let $p_j$ be a prime with $p_j \equiv 1~(\mathrm{mod}~4)$ for $1 \leq j \leq t$, $p_j \equiv 3~(\mathrm{mod}~4)$ for $t+1 \leq j \leq k$ and $s_j$ be a positive integer for $1 \leq j \leq k$. If $n=2{p_1}^{s_1} \cdots {p_t}^{s_t} {p_{t+1}}^{s_{t+1}} \cdots {p_k}^{s_k}$, then $G_n$ is non-integral.
\end{lema}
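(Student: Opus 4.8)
The plan is to exhibit a single non-integral eigenvalue of $G_n$, exactly as in the proof of Lemma~\ref{odd, prime 4k+1 and prime 4k+3}; the only new ingredient is the factor of $2$ in $n$. First I would invoke the Chinese remainder theorem to write ${\mathbb{Z}}_n = {\mathbb{Z}}_2 \times {\mathbb{Z}}_{{p_1}^{s_1}} \times \cdots \times {\mathbb{Z}}_{{p_k}^{s_k}}$ and sort the prime-power factors according to whether $-1$ lies in the corresponding set of quadratic residues. Since $U(2) = \{1\}$ we have $Q_2 = \{1\}$ and $-1 \equiv 1 \Mod{2}$, so $-1 \in Q_2$; together with the primes $p_1, \ldots, p_t \equiv 1 \Mod{4}$ (for which $-1$ is a quadratic residue) these form the ``good'' factors, while $p_{t+1}, \ldots, p_k \equiv 3 \Mod{4}$ are the ``bad'' factors. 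As $t < k$, at least one bad factor is present, so Theorem~\ref{theorem canonical factorization}(ii) applies and every eigenvalue of $G_n$ has the form
\[
\lambda_{(b, a_1, \ldots, a_k)} = \chi_b(Q_2)\,\chi_{a_1}(Q_{{p_1}^{s_1}}) \cdots \chi_{a_t}(Q_{{p_t}^{s_t}}) \Bigl[ \chi_{a_{t+1}}(Q_{{p_{t+1}}^{s_{t+1}}}) \cdots \chi_{a_k}(Q_{{p_k}^{s_k}}) + \overline{\chi_{a_{t+1}}(Q_{{p_{t+1}}^{s_{t+1}}})} \cdots \overline{\chi_{a_k}(Q_{{p_k}^{s_k}})} \Bigr],
\]
with $b \in {\mathbb{Z}}_2$ and $a_j \in {\mathbb{Z}}_{{p_j}^{s_j}}$.

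Next I would make the same coordinate choice that worked in Lemma~\ref{odd, prime 4k+1 and prime 4k+3}, augmented by a choice for the new coordinate $b$. Take $b = 0$, pick $a_1 \in {\mathbb{Z}}_{{p_1}^{s_1}} \setminus \{0\}$ with $\bigl(\tfrac{a_1}{{p_1}^{s_1-1}}/p_1\bigr) = 1$ (such a residue exists since $p_1$ has exactly $\tfrac{p_1-1}{2}$ quadratic residues), and set $a_j = 0$ for $2 \le j \le k$. By Theorem~\ref{theorem character prime power}(i), $\chi_0(Q_2) = \cos 0 = 1$, a nonzero rational. Using $p_1 \equiv 1 \Mod{4}$, so that $\mathbf{i}^{p_1} = \mathbf{i}$ and $\tfrac{1 + \mathbf{i}^{p_1}}{1 + \mathbf{i}} = 1$, Theorem~\ref{theorem character prime power}(ii) gives $\chi_{a_1}(Q_{{p_1}^{s_1}}) = \tfrac{1}{2}{p_1}^{s_1-1}(\sqrt{p_1} - 1)$, which carries the irrational part. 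Every remaining character value is the positive rational number $\chi_0(Q_{{p_j}^{s_j}}) = \tfrac{{p_j}^{s_j-1}(p_j - 1)}{2}$, so the bracket collapses to twice a positive rational.

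Putting these together, the chosen eigenvalue equals $-\theta + \theta\sqrt{p_1}$ for some $\theta \in {\mathbb{Q}} \setminus \{0\}$, and since $\sqrt{p_1}$ is irrational this eigenvalue is not rational; hence $G_n$ is non-integral. The only point requiring care—and the single respect in which this differs from Lemma~\ref{odd, prime 4k+1 and prime 4k+3}—is confirming that the factor $2$ belongs to the good group, so that $\chi_b(Q_2)$ appears as an outside multiplier rather than inside the bracket, and checking that this multiplier (equal to $1$ for $b = 0$) is a nonzero rational that does not spoil the irrationality. Everything else is the routine multiplicative bookkeeping of rational character values, which is why the paper omits the details.
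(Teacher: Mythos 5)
Your proposal is correct and follows exactly the route the paper intends: it reduces to the argument of the preceding lemma via the decomposition ${\mathbb{Z}}_n = {\mathbb{Z}}_2 \times {\mathbb{Z}}_{{p_1}^{s_1}} \times \cdots \times {\mathbb{Z}}_{{p_k}^{s_k}}$, correctly places the factor $2$ among those with $-1 \in Q_{2}$, and exhibits the eigenvalue $-\theta + \theta\sqrt{p_1}$ with $\theta \in \mathbb{Q}\setminus\{0\}$. This is precisely the ``similar'' argument the paper omits, with the one genuinely new point (that $\chi_0(Q_2)=1$ multiplies outside the bracket and preserves irrationality) checked explicitly.
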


\begin{lema}
Let $h$ be an integer with $h > 1$, $p$ be a prime with $p \equiv 3~(\mathrm{mod}~4)$ and $s$ be a positive integer. If $n=2^h p^s$, then $G_n$ is non-integral.
\end{lema}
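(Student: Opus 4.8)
The plan is to show $G_n$ is non-integral for $n = 2^h p^s$ with $h > 1$ and $p \equiv 3 \pmod 4$ by exhibiting a single eigenvalue of $G_n$ that is irrational. Since $-1 \in Q_{2^h}$ for $h \geq 2$ (indeed $-1$ is a quadratic residue modulo $4$ and higher powers of $2$ when we restrict to units, though here I should be careful: for $h=2$ we have $Q_4$ and I will use the structure coming from Theorem~\ref{theorem canonical factorization}), and $-1 \notin Q_{p^s}$ because $p \equiv 3 \pmod 4$, the factorization $n = 2^h \cdot p^s$ falls into the case $t < \ell$ of Theorem~\ref{theorem canonical factorization} with exactly one ``$+1$-type'' prime power (namely $2^h$) and one ``$-1$-type'' prime power (namely $p^s$). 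Thus I would write ${\mathbb{Z}}_n = {\mathbb{Z}}_{2^h} \times {\mathbb{Z}}_{p^s}$ and express the eigenvalues as
$$\lambda_{(a_1, a_2)} = \chi_{a_1}\left(Q_{2^h}\right)\left[\chi_{a_2}\left(Q_{p^s}\right) + \overline{\chi_{a_2}\left(Q_{p^s}\right)}\right].$$

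First I would choose $a_1 \in {\mathbb{Z}}_{2^h}$ so that $\chi_{a_1}(Q_{2^h})$ is a nonzero \emph{rational} number; the natural choice is $a_1 = 0$, giving $\chi_0(Q_{2^h}) = |Q_{2^h}|$, which is a positive integer (for $h=2$ this is $\chi_0(Q_4)=1$, and for $h>2$ the first case of Theorem~\ref{theorem character prime power}(i) gives $\chi_0(Q_{2^h}) = 2^{k-3}$, a nonzero rational). This isolates the irrationality entirely in the second factor. Then I would choose $a_2 \in {\mathbb{Z}}_{p^s} \setminus \{0\}$ with $a_2 \in p^{s-1}{\mathbb{Z}}_p$ and $\left(\frac{a_2}{p^{s-1}}/p\right) = 1$, exactly as in the proof of Lemma~\ref{p^s}. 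Applying Theorem~\ref{theorem character prime power}(ii) and adding the conjugate, $\chi_{a_2}(Q_{p^s}) + \overline{\chi_{a_2}(Q_{p^s})}$ produces a term of the form $\theta(-1 + \sqrt{p})$ for some nonzero rational $\theta$, since for $p \equiv 3 \pmod 4$ the factor $\frac{1 + \mathbf{i}^p}{1 + \mathbf{i}}$ contributes the genuine $\sqrt{p}$ after combining with its conjugate.

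Putting the two factors together yields $\lambda_{(a_1, a_2)} = \theta'(-1 + \sqrt{p})$ for some $\theta' \in \mathbb{Q} \setminus \{0\}$, which is irrational because $\sqrt{p}$ is irrational. Hence $G_n$ has a non-integer eigenvalue and is therefore non-integral. Since the argument is structurally identical to the one-irrational-eigenvalue strategy already deployed in Lemmas~\ref{odd, only prime 4k+3}, \ref{odd, only prime 4k+1} and~\ref{odd, prime 4k+1 and prime 4k+3}, I expect the write-up to be short, and indeed the authors may simply declare the details omitted as in the neighboring lemmas.

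The main obstacle, and the only place demanding genuine care, is verifying that the $2$-part really does contribute a nonzero rational factor across all $h > 1$ rather than inadvertently vanishing or introducing its own irrationality. Taking $a_1 = 0$ sidesteps this cleanly because $\chi_0(Q_{2^h})$ is always a nonzero rational by Theorem~\ref{theorem character prime power}(i); the alternative of choosing a nonzero $a_1$ would risk either a zero character sum (the ``otherwise'' case) or the appearance of a complex $\exp(-\frac{a_1 \pi \mathbf{i}}{2^{h-1}})$ factor that could complicate the rationality analysis. So the key tactical decision is to keep the first coordinate at $0$ and let the single prime $p \equiv 3 \pmod 4$ supply the irrationality $\sqrt{p}$, after which non-integrality is immediate.
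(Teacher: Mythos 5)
There is a genuine gap, and it sinks the argument at two linked points. First, your premise that $-1 \in Q_{2^h}$ for $h \geq 2$ is false: by Lemma~\ref{lemma 1.2}, $-1 \notin Q_{2^k}$ for every $k > 1$ (modulo $4$ the only square of a unit is $1$). Hence in Theorem~\ref{theorem canonical factorization} the factor $2^h$ sits on the ``$-1 \notin Q$'' side together with $p^s$, and the correct eigenvalue formula is $\lambda_{(a_1,a_2)} = \chi_{a_1}(Q_{2^h})\chi_{a_2}(Q_{p^s}) + \overline{\chi_{a_1}(Q_{2^h})\chi_{a_2}(Q_{p^s})}$, not the one you wrote. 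Second, and more damagingly, for $p \equiv 3 \pmod 4$ the quantity $\chi_{a_2}(Q_{p^s}) + \overline{\chi_{a_2}(Q_{p^s})}$ is \emph{not} of the form $\theta(-1+\sqrt{p})$: here $\frac{1+\mathbf{i}^p}{1+\mathbf{i}} = -\mathbf{i}$, so $\chi_{a_2}(Q_{p^s}) = \frac{1}{2}p^{s-1}\left(-1 - \mathbf{i}\sqrt{p}\right)$ has its $\sqrt{p}$-part purely imaginary, and adding the conjugate gives exactly $-p^{s-1}$, a rational number. (This is precisely why $G_{p^s}$ is integral in Lemma~\ref{p^s}; the $-1+\sqrt{p}$ behaviour you describe belongs to $p \equiv 1 \pmod 4$.) Consequently your choice $a_1 = 0$ produces the eigenvalue $\chi_0(Q_{2^h})\cdot(-p^{s-1})$, which is rational, and the proof establishes nothing.

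The paper's proof makes the opposite tactical choice: it takes $a_1 = 2^{h-2}$, for which Theorem~\ref{theorem character prime power}(i) gives a \emph{purely imaginary} character sum ($\chi_1(Q_4) = -\mathbf{i}$ when $h=2$, and $\chi_{2^{h-2}}(Q_{2^h}) = -2^{h-3}\mathbf{i}$ when $h>2$). Multiplying this purely imaginary $2$-part by the purely imaginary $\sqrt{p}$-part of $\chi_{a_2}(Q_{p^s})$ turns that $\sqrt{p}$ term real, so it survives the addition of the conjugate and yields $\lambda_{(a_1,a_2)} = \theta\sqrt{p}$ with $\theta \in \mathbb{Q}\setminus\{0\}$. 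In short, the irrationality here comes from the interaction of the $2$-part with the $p$-part, not from the $p$-part alone, so keeping the first coordinate at $0$ is exactly the move that must be avoided.
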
 
\begin{proof}
We write ${\mathbb{Z}}_n={\mathbb{Z}}_{2^h} \times {\mathbb{Z}}_{p^s}$. Then we have  
$$\lambda_{(a_1, a_2)}=\chi_{a_1}(Q_{2^h}) \chi_{a_2} \left(Q_{p^s} \right)+\overline{\chi_{a_1}(Q_{2^h}) \chi_{a_2} \left(Q_{p^s} \right)},$$
where $a_1 \in {\mathbb{Z}}_{2^h}$ and $a_2 \in {\mathbb{Z}}_{p^s}$. Consider $a_1=2^{h-2}$ and $a_2 \in {\mathbb{Z}}_{p^s} \setminus \{0\}$ with $\left(\frac{a_2}{p^{s-1}}/p \right)=1$. 
Then  
$$\lambda_{(a_1, a_2)}=\theta \sqrt{p}~~~\mathrm{for~some}~\theta \in \mathbb{Q} \setminus \{0\}.$$
Therefore, the graph $G_n$ is non-integral.
\end{proof}

\begin{lema}\label{even, only prime 4k+3}
Let $k,h$ be integers such that $k>1$ and $h > 1$. Also, let $p_j$ be a prime with $p_j \equiv 3~(\mathrm{mod}~4)$ and $s_j$ be a positive integer for $1 \leq j \leq k$. If $n = 2^h{p_1}^{s_1} \cdots {p_k}^{s_k}$, then $G_n$ is non-integral.
\end{lema}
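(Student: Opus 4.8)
The plan is to observe that the hypotheses force every prime-power factor of $n$ to satisfy $-1 \notin Q$, so that the eigenvalue formula of Theorem~\ref{theorem canonical factorization} applies in its $t=0$ form, and then to reduce the computation to the one already carried out in Lemma~\ref{odd, only prime 4k+3}. First I would record the relevant residue facts. Since $h > 1$, Lemma~\ref{lemma 1.2} gives $-1 \notin Q_{2^h}$; since each $p_j \equiv 3~(\mathrm{mod}~4)$, the congruence $x^2 \equiv -1$ has no solution modulo $p_j$, so $-1 \notin Q_{{p_j}^{s_j}}$. Writing the canonical factorization of $n$ with the factor $2^h$ listed first, it follows that $t = 0$ and $\ell = k+1$ in the notation of Theorem~\ref{theorem canonical factorization}. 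Consequently part~(ii) of that theorem expresses the eigenvalues of $G_n$ as
$$\lambda_{(a_0, a_1, \ldots, a_k)} = \chi_{a_0}(Q_{2^h}) \prod_{j=1}^k \chi_{a_j}(Q_{{p_j}^{s_j}}) + \overline{\chi_{a_0}(Q_{2^h})} \, \prod_{j=1}^k \overline{\chi_{a_j}(Q_{{p_j}^{s_j}})},$$
where $a_0 \in {\mathbb{Z}}_{2^h}$ and $a_j \in {\mathbb{Z}}_{{p_j}^{s_j}}$ for $1 \leq j \leq k$.

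Next I would exhibit an irrational eigenvalue. I would take $a_0 = 0$; by Theorem~\ref{theorem character prime power}(i) the value $\chi_0(Q_{2^h})$ is a nonzero rational (it equals $1$ when $h = 2$ and $2^{h-3}$ when $h > 2$), and in particular is real, so the $2^h$-factor contributes only a positive rational scalar. I would then choose $a_1, \ldots, a_k$ exactly as in Lemma~\ref{odd, only prime 4k+3}: pick $a_1 \in {\mathbb{Z}}_{{p_1}^{s_1}} \setminus \{0\}$ with $\left(\frac{a_1}{{p_1}^{s_1-1}}/p_1\right) = 1$, pick $a_2 \in {\mathbb{Z}}_{{p_2}^{s_2}} \setminus \{0\}$ with $\left(\frac{a_2}{{p_2}^{s_2-1}}/p_2\right) = -1$, and set $a_3 = \cdots = a_k = 0$. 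Using the $p \equiv 3~(\mathrm{mod}~4)$ evaluation of Theorem~\ref{theorem character prime power}(ii), where $\frac{1 + \mathbf{i}^p}{1 + \mathbf{i}} = -\mathbf{i}$, the real part of the product $\prod_{j=1}^k \chi_{a_j}(Q_{{p_j}^{s_j}})$ is a nonzero rational multiple of $1 + \sqrt{p_1 p_2}$, precisely as in Lemma~\ref{odd, only prime 4k+3}. Multiplying by the rational scalar $\chi_0(Q_{2^h})$ and adding the conjugate term yields
$$\lambda_{(a_0, a_1, \ldots, a_k)} = \theta + \theta \sqrt{p_1 p_2} \quad \text{for some } \theta \in \mathbb{Q} \setminus \{0\}.$$
Since $p_1$ and $p_2$ are distinct primes, $p_1 p_2$ is not a perfect square, so $\sqrt{p_1 p_2}$ is irrational and this eigenvalue is irrational; hence $G_n$ is non-integral.

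The only point requiring care is the first step, namely verifying that $t = 0$: the argument hinges on $-1$ lying in none of the quadratic sets $Q_{2^h}$ and $Q_{{p_j}^{s_j}}$, and it is exactly the hypothesis $h > 1$ (together with each $p_j \equiv 3~(\mathrm{mod}~4)$) that guarantees this and thereby fixes which branch of Theorem~\ref{theorem canonical factorization} governs the spectrum. Once the $t = 0$ form of the eigenvalue formula is in place, the choice $a_0 = 0$ neutralizes the $2^h$-factor down to a rational scalar, and the remainder of the computation is identical to that of Lemma~\ref{odd, only prime 4k+3}; this is why the two proofs are essentially the same.
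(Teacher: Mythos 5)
Your proof is correct and follows essentially the same route as the paper: both invoke Theorem~\ref{theorem canonical factorization}(ii) in its $t=0$ form and exhibit a single irrational eigenvalue via the character-sum formulas of Theorem~\ref{theorem character prime power}. The only difference is the choice of witness --- the paper takes $a=2^{h-2}$ at the $2$-part and a single quadratic residue at $p_1$ (yielding $\theta\sqrt{p_1}$), whereas you take $a_0=0$ and opposite Legendre symbols at $p_1,p_2$ (yielding $\theta+\theta\sqrt{p_1p_2}$); both choices are valid under the hypothesis $k>1$.
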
  
\begin{proof}
We write ${\mathbb{Z}}_n={\mathbb{Z}}_{2^h} \times {\mathbb{Z}}_{{p_1}^{s_1}} \times \cdots \times {\mathbb{Z}}_{{p_k}^{s_k}}$. Then we have  
$$\lambda_{(a, a_1, \ldots, a_k)}=\chi_a(Q_{2^h}) \chi_{a_1} \left(Q_{{p_1}^{s_1}} \right) \cdots \chi_{a_k}\left(Q_{{p_k}^{s_k}} \right)+\overline{\chi_a(Q_{2^h}) \chi_{a_1} \left(Q_{{p_1}^{s_1}} \right) \cdots \chi_{a_k}\left(Q_{{p_k}^{s_k}} \right)},$$
where $a \in {\mathbb{Z}}_{2^h}$ and $a_j \in {\mathbb{Z}}_{{p_j}^{s_j}}$ for $1 \leq j \leq k$. Let $a=2^{h-2}$, $a_1 \in {\mathbb{Z}}_{{p_1}^{s_1}} \setminus \{0\}$ such that $\left(\frac{a_1}{{p_1}^{s_1-1}}/p_1 \right)=1$ and $a_j=0$ for $2 \leq j \leq k$. 
Then 
$$\lambda_{(a, a_1, \ldots, a_k)}=\theta \sqrt{p}~~~\mathrm{for~some}~\theta \in \mathbb{Q} \setminus \{0\}.$$
Therefore, the graph $G_n$ is non-integral. 
\end{proof}

\begin{lema}
Let $h$ be an integer with $h > 1$, $p$ be a prime with $p \equiv 1~(\mathrm{mod}~4)$ and $s$ be a positive integer. If $n=2^h p^s$, then $G_n$ is non-integral.
\end{lema}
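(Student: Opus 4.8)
The plan is to reuse the template of the preceding integrality lemmas: realize $\mathbb{Z}_n$ as $\mathbb{Z}_{2^h}\times\mathbb{Z}_{p^s}$ and exhibit a single irrational eigenvalue through Theorem~\ref{theorem canonical factorization}. The first step is to decide which prime-power factor occupies which block of that theorem. Since $p\equiv 1\ (\mathrm{mod}\ 4)$, the congruence $x^2\equiv -1\ (\mathrm{mod}\ p)$ is solvable and lifts to $p^s$, so $-1\in Q_{p^s}$; by contrast Lemma~\ref{lemma 1.2} gives $-1\notin Q_{2^h}$ because $h>1$. Hence we are in case (ii) of Theorem~\ref{theorem canonical factorization} with $t=1<\ell=2$, the factor $p^s$ sitting in the ``$-1\in Q$'' block and $2^h$ in the other. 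The eigenvalues then take the form
$$\lambda_{(a_1,a_2)}=\chi_{a_1}\!\left(Q_{p^s}\right)\left[\chi_{a_2}\!\left(Q_{2^h}\right)+\overline{\chi_{a_2}\!\left(Q_{2^h}\right)}\right],\qquad a_1\in\mathbb{Z}_{p^s},\ a_2\in\mathbb{Z}_{2^h}.$$

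Next I would make both factors explicit for a convenient index. Taking $a_1\in p^{s-1}\mathbb{Z}_p\setminus\{0\}$ with $\left(\frac{a_1}{p^{s-1}}/p\right)=1$, Theorem~\ref{theorem character prime power}(ii) applies; since $p\equiv 1\ (\mathrm{mod}\ 4)$ forces $\mathbf{i}^{p}=\mathbf{i}$ and hence $\frac{1+\mathbf{i}^{p}}{1+\mathbf{i}}=1$, this gives the irrational value $\chi_{a_1}(Q_{p^s})=\tfrac{1}{2}p^{s-1}(\sqrt{p}-1)$. For the bracketed factor I would simply take $a_2=0$, which falls into the second branch of Theorem~\ref{theorem character prime power}(i); there $\chi_0(Q_{2^h})$ is a positive integer (equal to $1$ when $h=2$ and to $2^{h-3}$ when $h>2$), so the bracket $\chi_0(Q_{2^h})+\overline{\chi_0(Q_{2^h})}$ is a nonzero rational.

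Multiplying the two factors then produces $\lambda_{(a_1,0)}=-\theta+\theta\sqrt{p}$ for some $\theta\in\mathbb{Q}\setminus\{0\}$, exactly the irrational shape already seen in Lemmas~\ref{odd, only prime 4k+1} and~\ref{odd, prime 4k+1 and prime 4k+3}; as $G_n$ has a non-rational eigenvalue, it is non-integral. I do not expect a genuine obstacle here, since the argument is a direct adaptation of the earlier cases. The only point that needs care is the bookkeeping of which factor lands in which block of Theorem~\ref{theorem canonical factorization}: this is precisely the reversal of the $p\equiv 3\ (\mathrm{mod}\ 4)$ case just treated, because now it is $p^s$ (rather than $2^h$) that supplies the $\sqrt{p}$, while the index $a_2=0$ is chosen so that the $2^h$ factor contributes only a rational scalar and cannot cancel the irrationality.
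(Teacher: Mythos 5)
Your proposal is correct and follows essentially the same route as the paper: the same decomposition $\mathbb{Z}_{p^s}\times\mathbb{Z}_{2^h}$ placing $p^s$ in the ``$-1\in Q$'' block, the same choice of index ($a_1$ with $\left(\frac{a_1}{p^{s-1}}/p\right)=1$ and $a_2=0$), and the same resulting eigenvalue $-\theta+\theta\sqrt{p}$ with $\theta\in\mathbb{Q}\setminus\{0\}$. The extra verifications you supply (that $\frac{1+\mathbf{i}^p}{1+\mathbf{i}}=1$ and that the bracket $2\chi_0(Q_{2^h})$ is a nonzero rational) are accurate and only make explicit what the paper leaves implicit.
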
  
\begin{proof}
We write ${\mathbb{Z}}_n={\mathbb{Z}}_{p^s} \times {\mathbb{Z}}_{2^h}$. Then we have 
$$\lambda_{(a_1, a_2)}=\chi_{a_1}(Q_{p^s}) \left[\chi_{a_2}(Q_{2^h})+\overline{\chi_{a_2}(Q_{2^h})} \right],$$
where $a_1 \in {\mathbb{Z}}_{p^s}$ and $a_2 \in {\mathbb{Z}}_{2^h}$. Consider $a_1 \in {\mathbb{Z}}_{p^s} \setminus \{0\}$ such that $\left(\frac{a_1}{p^{s-1}}/p \right)=1$ and $a_2=0$. Then 
$$\lambda_{(a_1, a_2)}=-\theta+\theta \sqrt{p}~~~\mathrm{for~some}~\theta \in \mathbb{Q} \setminus \{0\}.$$ 
Therefore, $G_n$ is non-integral. 
\end{proof}

\begin{lema}\label{even, only prime 4k+1}
Let $h, k$ be integers with $h > 1$ and $k > 1$. Also, let $p_j$ be a prime such that $p_j \equiv 1~(\mathrm{mod}~4)$ and $s_j$ is a positive integer for $1 \leq j \leq k$. If $n=2^h{p_1}^{s_1} \cdots {p_k}^{s_k}$, then $G_n$ is non-integral.
\end{lema}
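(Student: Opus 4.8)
The plan is to follow exactly the template of Lemma~\ref{odd, only prime 4k+1} and the immediately preceding single-odd-prime lemma: decompose the cyclic group into a product over prime powers, read off a generic eigenvalue as a product of character sums via Theorem~\ref{theorem canonical factorization}, and then exhibit one index vector whose eigenvalue contains an uncancellable surd.

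First I would write ${\mathbb{Z}}_n = {\mathbb{Z}}_{{p_1}^{s_1}} \times \cdots \times {\mathbb{Z}}_{{p_k}^{s_k}} \times {\mathbb{Z}}_{2^h}$. Since each $p_j \equiv 1~(\mathrm{mod}~4)$, we have $-1 \in Q_{{p_j}^{s_j}}$, whereas Lemma~\ref{lemma 1.2} gives $-1 \notin Q_{2^h}$ because $h > 1$. Hence in the notation of Theorem~\ref{theorem canonical factorization} the factor $2^h$ is the unique one carrying a conjugate, so case (ii) applies and the eigenvalues take the shape
\begin{align*}
\lambda_{(a_1,\ldots,a_k,b)} = \chi_{a_1}\!\left(Q_{{p_1}^{s_1}}\right) \cdots \chi_{a_k}\!\left(Q_{{p_k}^{s_k}}\right)\left[\chi_{b}(Q_{2^h}) + \overline{\chi_{b}(Q_{2^h})}\right],
\end{align*}
where $a_j \in {\mathbb{Z}}_{{p_j}^{s_j}}$ and $b \in {\mathbb{Z}}_{2^h}$.

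Next I would make the selection $b = 0$, $a_1 \in {\mathbb{Z}}_{{p_1}^{s_1}} \setminus \{0\}$ with $\left(\frac{a_1}{{p_1}^{s_1-1}}/p_1\right) = 1$, and $a_j = 0$ for $2 \le j \le k$. With these choices the bracketed $2^h$-term equals $2\chi_0(Q_{2^h})$, a nonzero rational by Theorem~\ref{theorem character prime power}(i); each factor $\chi_0(Q_{{p_j}^{s_j}}) = \frac{{p_j}^{s_j-1}(p_j-1)}{2}$ for $2 \le j \le k$ is a nonzero rational by Theorem~\ref{theorem character prime power}(ii); and for the index $a_1$ the same theorem gives $\chi_{a_1}(Q_{{p_1}^{s_1}}) = \frac{1}{2}{p_1}^{s_1-1}\!\left(\sqrt{p_1} - 1\right)$, where I would use that $p_1 \equiv 1~(\mathrm{mod}~4)$ forces $\textbf{i}^{p_1} = \textbf{i}$ and hence $\frac{1+\textbf{i}^{p_1}}{1+\textbf{i}} = 1$. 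Multiplying these factors, the eigenvalue becomes $-\theta + \theta \sqrt{p_1}$ for some $\theta \in \mathbb{Q} \setminus \{0\}$, which is irrational, so $G_n$ is non-integral.

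The computation is routine once the decomposition is fixed, so I do not anticipate a genuine obstacle. The only point needing care is to confirm that every factor other than $\chi_{a_1}(Q_{{p_1}^{s_1}})$ is a nonzero rational, so that $\sqrt{p_1}$ cannot be cancelled; this is precisely why I kill all the remaining coordinates ($b = 0$ and $a_j = 0$ for $j \ge 2$), which sends those character sums to the explicitly rational values above. The hypothesis $h > 1$ enters only through Lemma~\ref{lemma 1.2}, ensuring that $2^h$ lies on the conjugated side and supplies the harmless real factor $2\chi_0(Q_{2^h})$.
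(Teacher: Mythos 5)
Your proposal is correct and follows essentially the same route as the paper: the same decomposition ${\mathbb{Z}}_n = {\mathbb{Z}}_{{p_1}^{s_1}} \times \cdots \times {\mathbb{Z}}_{{p_k}^{s_k}} \times {\mathbb{Z}}_{2^h}$, the same choice of index (nonzero $a_1$ with Legendre symbol $1$ and all other coordinates zero), and the same conclusion that the eigenvalue has the form $-\theta + \theta\sqrt{p_1}$ with $\theta \in \mathbb{Q}\setminus\{0\}$. Your added verification that each remaining factor is a nonzero rational is a detail the paper leaves implicit, but the argument is the same.
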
  
\begin{proof}
We write ${\mathbb{Z}}_n={\mathbb{Z}}_{{p_1}^{s_1}} \times \cdots \times {\mathbb{Z}}_{{p_k}^{s_k}} \times {\mathbb{Z}}_{2^h}$. Then we have 
$$\lambda_{(a_1, \ldots, a_k, a_{k+1})}=\chi_{a_1}(Q_{{p_1}^{s_1}})\cdots \chi_{a_k}(Q_{{p_k}^{s_k}}) \left[\chi_{a_{k+1}}(Q_{2^h})+\overline{\chi_{a_{k+1}}(Q_{2^h})} \right],$$
where $a_j \in {\mathbb{Z}}_{{p_j}^{s_j}}$ for $1 \leq j \leq k$ and $a_{k+1} \in {\mathbb{Z}}_{2^h}$. Consider $a_1 \in {\mathbb{Z}}_{{p_1}^{s_1}} \setminus \{0\}$ such that $\left(\frac{a_1}{{p_1}^{s_1-1}}/p_1 \right)=1$ and $a_j=0$ for $2 \leq j \leq k+1$. Then 
$$\lambda_{(a_1, \ldots, a_k, a_{k+1})}=-\theta+\theta \sqrt{p_1}~~~\mathrm{for~some}~ \theta \in \mathbb{Q} \setminus \{0\}.$$
Therefore, $G_n$ is non-integral.
\end{proof}

The proof of the following lemma is similar to that of Lemma~\ref{even, only prime 4k+1}, and hence the details are omitted. 
\begin{lema}\label{even, prime 4k+1 and prime 4k+3}
Let $h,t,k$ be integers with $h > 1$ and $1 \leq t <k$. Also, let $p_j$ be a prime with $p_j \equiv 1~(\mathrm{mod}~4)$ for $1 \leq j \leq t$ and $p_j \equiv 3~(\mathrm{mod}~4)$ for $t+1 \leq j \leq k$, and $s_j$ be a positive integer for $1 \leq j \leq k$. If $n=2^h{p_1}^{s_1} \cdots {p_t}^{s_t} {p_{t+1}}^{s_{t+1}} \cdots {p_k}^{s_k}$, then $G_n$ is non-integral.
\end{lema}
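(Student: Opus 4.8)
The plan is to follow the argument of Lemma~\ref{even, only prime 4k+1} in spirit: exhibit one eigenvalue of $G_n$ lying in $\mathbb{Q}(\sqrt{p_1}) \setminus \mathbb{Q}$, which immediately forces non-integrality. First I would decompose ${\mathbb{Z}}_n \cong {\mathbb{Z}}_{{p_1}^{s_1}} \times \cdots \times {\mathbb{Z}}_{{p_k}^{s_k}} \times {\mathbb{Z}}_{2^h}$. Using the preliminary quadratic-congruence lemmas of Section~\ref{sec 2} together with Lemma~\ref{lemma 1.2}, the factors with $-1 \in Q$ are exactly those coming from the primes $p_j \equiv 1~(\mathrm{mod}~4)$, namely $1 \leq j \leq t$, whereas the primes $p_j \equiv 3~(\mathrm{mod}~4)$ and the factor ${\mathbb{Z}}_{2^h}$ (here $h > 1$) all satisfy $-1 \notin Q$. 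Hence the hypotheses of Theorem~\ref{theorem canonical factorization}(ii) hold with the split parameter equal to our $t$ and total number of prime-power factors $k+1 > t$, so the eigenvalues take the form of the product $\prod_{j=1}^{t}\chi_{a_j}(Q_{{p_j}^{s_j}})$ times a conjugate-symmetric bracket built from the remaining characters.

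Next I would fix the index exactly as before: choose $a_1 \in {\mathbb{Z}}_{{p_1}^{s_1}} \setminus \{0\}$ with $\left(\frac{a_1}{{p_1}^{s_1-1}}/p_1\right) = 1$ (which exists since quadratic residues are plentiful) and set all remaining coordinates to $0$, including the coordinate attached to ${\mathbb{Z}}_{2^h}$. Since $p_1 \equiv 1~(\mathrm{mod}~4)$ gives $\frac{1+\mathbf{i}^{p_1}}{1+\mathbf{i}} = 1$, Theorem~\ref{theorem character prime power}(ii) yields $\chi_{a_1}(Q_{{p_1}^{s_1}}) = \frac{{p_1}^{s_1-1}}{2}\bigl(\sqrt{p_1}-1\bigr)$, while $\chi_0(Q_{{p_j}^{s_j}}) = \frac{{p_j}^{s_j-1}(p_j-1)}{2}$ for the other odd primes and $\chi_0(Q_{2^h})$ equals $1$ or $2^{h-3}$ according as $h=2$ or $h>2$. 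The key observation is that every character evaluated at $0$ is a positive rational, hence real, so the conjugate-symmetric bracket collapses to $R + \overline{R} = 2R$ for a nonzero rational $R$.

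Finally, the outside product equals $\theta'\bigl(\sqrt{p_1}-1\bigr)$ for some $\theta' \in \mathbb{Q} \setminus \{0\}$, and multiplying by $2R$ produces the eigenvalue $\lambda = -\theta + \theta\sqrt{p_1}$ with $\theta = 2R\theta' \in \mathbb{Q} \setminus \{0\}$; this is irrational because $\sqrt{p_1}$ is, so $G_n$ is non-integral. I expect the only point requiring genuine care to be the bookkeeping in Theorem~\ref{theorem canonical factorization}(ii): one must confirm that all the zeroed factors (the $\equiv 3~(\mathrm{mod}~4)$ primes and the $2$-power) contribute only real positive rationals, so that the bracket neither introduces a second irrationality nor cancels the $\sqrt{p_1}$ supplied by $a_1$. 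Once that is checked, the remainder is the same routine character evaluation already performed in Lemma~\ref{even, only prime 4k+1} and its companions.
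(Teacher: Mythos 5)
Your proposal is correct and follows exactly the route the paper intends: the paper omits this proof, stating it is similar to that of Lemma~\ref{even, only prime 4k+1}, and your argument is precisely that argument carried out — decompose ${\mathbb{Z}}_n$ with the $2^h$ factor grouped among the $-1 \notin Q$ components, choose $a_1$ with $\left(\frac{a_1}{{p_1}^{s_1-1}}/p_1\right)=1$ and all other coordinates zero, and observe that the eigenvalue is $-\theta+\theta\sqrt{p_1}$ for some $\theta \in \mathbb{Q}\setminus\{0\}$. Your added care in verifying that all zeroed factors contribute positive rationals (so the bracket collapses to $2R$ without cancelling $\sqrt{p_1}$) is exactly the bookkeeping the paper leaves implicit.
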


Now we combine the preceding lemmas to present the following theorem, which classifies all integral quadratic unitary Cayley graphs.
\begin{theorem}\label{Integral QUCG}
The quadratic unitary Cayley graph $G_n$ is integral if and only if $n \in \{2, 4, p^s, 2p^s\}$ for some prime $p$ such that $p \equiv 3~(\mathrm{mod}~4)$ and a positive integer $s$.
\end{theorem}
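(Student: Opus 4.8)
The plan is to prove the two directions separately, in each case working from the canonical factorization $n = 2^h p_1^{s_1} \cdots p_k^{s_k}$ with $n > 1$, where $h \geq 0$, the $p_j$ are distinct odd primes, and each $s_j \geq 1$. Sufficiency is immediate from the lemmas already in hand: Lemma~\ref{2^h} gives that $G_2$ and $G_4$ are integral, and Lemmas~\ref{p^s} and~\ref{2p^s} give that $G_{p^s}$ and $G_{2p^s}$ are integral whenever $p \equiv 3~(\mathrm{mod}~4)$. Hence every $n$ in the stated set produces an integral $G_n$, and nothing more is required for this direction.

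For necessity I would argue the contrapositive: if $n > 1$ is not of the form $2$, $4$, $p^s$, or $2p^s$ with $p \equiv 3~(\mathrm{mod}~4)$, then $G_n$ is non-integral. This is an exhaustive case analysis governed by the exponent $h$ of $2$ together with the residues modulo $4$ of the odd prime factors. I would split into the regimes $h = 0$, $h = 1$, and $h > 1$, and within each regime subdivide according to whether $k = 0$, $k = 1$, or $k > 1$, and (for $k \geq 1$) whether the odd primes are all $\equiv 1$, all $\equiv 3$, or mixed modulo $4$. In each regime the only cases left integral are precisely the excluded forms: for $h = 0$, the single prime power $p^s$ with $p \equiv 3~(\mathrm{mod}~4)$; for $h = 1$, the value $2$ (when $k = 0$) and $2p^s$ with $p \equiv 3~(\mathrm{mod}~4)$ (when $k = 1$); and for $h > 1$, the value $4$ (when $h = 2$, $k = 0$). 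Every remaining case is covered by the matching non-integrality lemma: $G_{2^h}$ with $h > 2$ by Lemma~\ref{2^h}; odd $n$ with several primes $\equiv 3~(\mathrm{mod}~4)$ by Lemma~\ref{odd, only prime 4k+3}; odd $n$ from primes $\equiv 1~(\mathrm{mod}~4)$ (including $k = 1$) by Lemma~\ref{odd, only prime 4k+1}; mixed odd $n$ by Lemma~\ref{odd, prime 4k+1 and prime 4k+3}; the $2^h$ analogues for $h > 1$ by Lemmas~\ref{even, only prime 4k+3},~\ref{even, only prime 4k+1}, and~\ref{even, prime 4k+1 and prime 4k+3}; and the remaining cases $n = 2^h p^s$ with $h > 1$, together with $n = 2 p_1^{s_1} \cdots p_k^{s_k}$, by their dedicated companion lemmas proved just above each of these.

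The eigenvalue computations are already discharged by the individual lemmas, each of which exhibits a single eigenvalue of the shape $\theta + \theta\sqrt{p_1}$ or $\theta\sqrt{p}$ with $\theta \in \mathbb{Q} \setminus \{0\}$, hence irrational, forcing non-integrality; so at the level of the theorem there is no new spectral work to do. The main obstacle is therefore organizational rather than computational: I must verify that the partition of all $n > 1$ by the triple $(h,\,k,\,\text{residue pattern of the }p_j)$ is genuinely exhaustive and mutually exclusive, so that every $n$ outside the four admissible forms lands in exactly one non-integrality lemma and no admissible $n$ is inadvertently declared non-integral. I would make this explicit by tabulating the cases against $(h,k,\text{pattern})$ and confirming that the three integral families $\{2,4\}$, $\{p^s\}$, and $\{2p^s\}$ with $p \equiv 3~(\mathrm{mod}~4)$ are exactly the gaps left uncovered by the non-integrality lemmas. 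Since $G_n$ is periodic precisely when it is integral (Godsil~\cite{State transfer on graphs}, Theorem~5.2), this classification simultaneously records which $G_n$ are periodic.
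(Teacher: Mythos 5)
Your proposal is correct and follows essentially the same route as the paper: the paper likewise obtains Theorem~\ref{Integral QUCG} by combining Lemmas~\ref{2^h}--\ref{even, prime 4k+1 and prime 4k+3}, with sufficiency from the integrality lemmas and necessity from the exhaustive case analysis over the canonical factorization of $n$. The organizational check you describe (that the non-integrality lemmas cover every $n$ outside the four admissible forms) is exactly what the paper's ``combine the preceding lemmas'' step amounts to.
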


\begin{corollary}\label{Periodic QUCG}
The quadratic unitary Cayley graph $G_n$ is periodic if and only if $n \in \{2, 4, p^s, 2p^s\}$ for some prime $p$ such that $p \equiv 3~(\mathrm{mod}~4)$ and a positive integer $s$. 
\end{corollary}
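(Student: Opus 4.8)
The plan is to derive this corollary directly by combining two facts that are already in hand, so that essentially no new work is required. The crucial structural input is the equivalence, attributed to Godsil~\cite{State transfer on graphs} [see Theorem 5.2] and recorded at the start of this section, that a quadratic unitary Cayley graph $G_n$ is periodic if and only if it is integral. This converts the question of periodicity into a purely spectral (indeed number-theoretic) one, and reduces the entire problem to classifying the integral $G_n$.

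First I would invoke Theorem~\ref{Integral QUCG}, which asserts that $G_n$ is integral precisely when $n \in \{2, 4, p^s, 2p^s\}$ for some prime $p \equiv 3~(\mathrm{mod}~4)$ and positive integer $s$. That theorem is itself the endpoint of the exhaustive case analysis over the canonical factorization of $n$ carried out in the preceding lemmas, using the eigenvalue descriptions of Theorem~\ref{theorem canonical factorization} together with the explicit character-sum formulas of Theorem~\ref{theorem character prime power}. Since the integral classification has already been established, I would simply quote it rather than recompute any eigenvalues.

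Combining the two, the proof collapses to a single chain of equivalences: $G_n$ is periodic if and only if $G_n$ is integral, which by Theorem~\ref{Integral QUCG} holds if and only if $n \in \{2, 4, p^s, 2p^s\}$ with $p \equiv 3~(\mathrm{mod}~4)$ and $s$ a positive integer. This is exactly the asserted statement, so the corollary follows immediately. There is no genuine obstacle here; the whole difficulty has been absorbed into Theorem~\ref{Integral QUCG}. The only point meriting a moment's care is to confirm that the periodic-if-and-only-if-integral criterion applies to $G_n$ uniformly across all admissible $n$ (it does, as $G_n$ is a finite simple graph with a real symmetric adjacency matrix and hence a well-defined spectrum), so that the equivalence can be transported verbatim onto the integrality classification.
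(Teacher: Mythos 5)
Your proposal is correct and matches the paper exactly: the corollary is obtained by combining the periodicity-if-and-only-if-integrality criterion quoted from Godsil at the start of Section~\ref{sec 4} with the integrality classification of Theorem~\ref{Integral QUCG}. The paper offers no further argument, treating the corollary as an immediate consequence, just as you do.
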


Now we determine the quadratic unitary Cayley graphs admitting PST. It follows from Godsil~\cite{State transfer on graphs} [see Theorem 6.1] that if a quadratic unitary Cayley graph $G_n$ admits PST, then it is periodic and $n$ is even. Therefore, if $G_n$ admits PST, then from Corollary~\ref{Periodic QUCG} we obtain that $n \in \{2, 4, 2p^s\}$ for some prime $p$ such that $p \equiv 3~(\mathrm{mod}~4)$ and a positive integer $s$. Clearly, the graphs $G_2$ and $G_4$ admit PST. Theorem~\ref{Tan} implies that $G_{2p^s}$ does not admit PST for any odd prime $p$. Thus, we obtain the following theorem.

\begin{theorem}\label{PST on QUCG}
The quadratic unitary Cayley graph $G_n$ admits PST if and only if $n=2$ or $n=4$.  
\end{theorem}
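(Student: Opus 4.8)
The plan is to read the statement as the conjunction of two implications and treat each separately, leaning on the structural results already assembled before the theorem.

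For the ``only if'' direction, I would start from the two necessary conditions for PST supplied by Godsil: a graph admitting PST is periodic, and when it is an abelian Cayley graph the number of vertices must be even. Feeding periodicity into Corollary~\ref{Periodic QUCG} immediately restricts $n$ to the list $\{2,4,p^s,2p^s\}$ with $p\equiv 3~(\mathrm{mod}~4)$, and the evenness condition deletes the odd entries $p^s$, leaving only $n\in\{2,4,2p^s\}$. The remaining task is to eliminate the family $n=2p^s$. Here I would invoke Theorem~\ref{Tan}: since $G_{2p^s}$ is periodic it is integral (Godsil, Theorem~5.2), and it is by construction an abelian Cayley graph on $2p^s>3$ vertices, so PST would force $4\mid 2p^s$. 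But $p\equiv 3~(\mathrm{mod}~4)$ is odd, whence $2p^s\equiv 2~(\mathrm{mod}~4)$, a contradiction. Thus no $G_{2p^s}$ admits PST, and the only survivors are $n=2$ and $n=4$.

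For the ``if'' direction I would verify the two remaining cases by direct computation of the transition matrix. Unwinding the definitions gives $T_2=\{1\}$ and $T_4=\{1,3\}$, so $G_2\cong K_2$ and $G_4\cong C_4$; both are classical PST examples. For $G_2$ one writes $H(t)=\cos(t)I-\mathbf{i}\sin(t)A$ and reads off $H(\pi/2)\mathbf{e}_0=-\mathbf{i}\,\mathbf{e}_1$, giving PST from $0$ to $1$. For $G_4$ the analogous evaluation of the transition matrix at $t=\pi/2$ sends $\mathbf{e}_0$ to a unit-modulus multiple of $\mathbf{e}_2$, exhibiting PST from a vertex to its antipode.

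The proof presents essentially no obstacle, since all of the hard work is already packaged in Corollary~\ref{Periodic QUCG}, in Godsil's necessary conditions, and in Theorem~\ref{Tan}. The one point demanding care is the bookkeeping in the ``only if'' step: I must confirm that $G_{2p^s}$ genuinely meets every hypothesis of Theorem~\ref{Tan} --- integrality (via periodicity), the abelian Cayley structure, and the size bound $n>3$ --- before invoking the divisibility obstruction. Once that is checked, the parity of $2p^s$ does the rest.
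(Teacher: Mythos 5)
Your proposal is correct and follows essentially the same route as the paper: Godsil's necessary conditions (periodicity and evenness) combined with Corollary~\ref{Periodic QUCG} to reduce to $n\in\{2,4,2p^s\}$, then Theorem~\ref{Tan} to rule out $2p^s\equiv 2~(\mathrm{mod}~4)$, with $G_2$ and $G_4$ handled as the classical examples $K_2$ and $C_4$. The only difference is that you spell out the transition-matrix computation for the ``if'' direction where the paper simply asserts it, which is a harmless elaboration.
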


\section{PGST on quadratic unitary Cayley graphs}\label{sec 5}
In this section, we completely classify the quadratic unitary Cayley graphs admitting PGST. Pal and Bhattacharjya proved the following theorem, which gives a necessary condition for the existence of PGST on circulant graphs.
\begin{theorem}\label{Pal circulant}\emph{\cite{Pal circulant}}
Let $a$ and $b$ be two vertices of $\mathrm{Cay}({\mathbb{Z}}_n, S)$ such that $a < b$. If $\mathrm{Cay}({\mathbb{Z}}_n, S)$ admits PGST from $a$ to $b$, then $n$ is even and $b=a+\frac{n}{2}$.
\end{theorem}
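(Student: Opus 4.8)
The plan is to exploit the circulant and complex-symmetric structure of $H(t)$ together with the definition of PGST. Since PGST is the special case $\alpha = 0$, the hypothesis furnishes a sequence $\{t_k\}$ with $\lim_{k\to\infty} H(t_k)\mathbf{e}_a = \beta \mathbf{e}_b$ for some complex $\beta$ with $|\beta|=1$. The goal is to extract from this convergence a rigid algebraic constraint on $d := b-a$.

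First I would use that $A$ is circulant, so $H(t)=\exp(-\mathbf{i}tA)$ is a limit of polynomials in $A$ and hence itself circulant for every real $t$; that is, the entry $H(t)_{ij}$ depends only on $i-j \pmod n$. Consequently, convergence of the single column $H(t_k)\mathbf{e}_a \to \beta\mathbf{e}_b$ forces convergence of every column: each column $j$ of $H(t_k)$ is the cyclic shift of column $a$ by $j-a$, so $H(t_k)\mathbf{e}_j \to \beta\mathbf{e}_{j+d}$ for all $j$, and therefore $H(t_k)\to \beta P$ entrywise, where $P$ is the permutation matrix of the shift $j\mapsto j+d \pmod n$.

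Next I would invoke complex symmetry. Because $A$ is real symmetric, $H(t)^{T}=\exp(-\mathbf{i}tA^{T})=\exp(-\mathbf{i}tA)=H(t)$, so each $H(t_k)$ is symmetric, and hence so is the limit $\beta P$. As $\beta\neq 0$, this forces $P=P^{T}$, i.e. the shift $j\mapsto j+d$ coincides with its inverse $j\mapsto j-d$, which is equivalent to $2d\equiv 0 \pmod n$. Since $b>a$ gives $d\neq 0$, this congruence holds only when $n$ is even and $d=\tfrac{n}{2}$; that is, $b=a+\tfrac{n}{2}$, as claimed.

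The step I expect to require the most care is the passage from convergence of a single column to convergence of the whole matrix: one must verify that the circulant relabeling is valid at each $t_k$ and commutes with taking the limit, so that a single scalar $\beta$ is genuinely shared across all columns. Once that bookkeeping is secured, the symmetry argument collapses the problem to the elementary congruence $2d\equiv 0\pmod n$, and the conclusion is immediate. An alternative route through the eigenvalue–ratio (Kronecker-type) conditions is possible, but the symmetry argument is cleaner for establishing this necessary condition.
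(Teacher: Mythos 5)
Your proof is correct: the circulant structure of $H(t)=\exp(-\mathbf{i}tA)$ propagates convergence of the single column $H(t_k)\mathbf{e}_a\to\beta\mathbf{e}_b$ to entrywise convergence $H(t_k)\to\beta P$ for the shift permutation $P$ by $d=b-a$, and complex symmetry of each $H(t_k)$ then forces $P=P^{T}$, i.e.\ $2d\equiv 0\pmod{n}$, which with $0<d<n$ yields $n$ even and $b=a+\frac{n}{2}$. The paper imports this theorem from Pal and Bhattacharjya without reproving it, and your symmetry-plus-circulant argument is essentially the standard one from that reference (there phrased entrywise via $H(t)_{b,a}=H(t)_{a,b}=H(t)_{b,2b-a}$ and unitarity of the rows), so no gap remains.
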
 

The following lemma about complex numbers is straightforward.
\begin{lema}\label{PGST lema unit modulus}
Let $n$ be a positive integer and $\phi_0, \ldots, \phi_{n-1}$ be complex numbers of unit modulus. Then $|\phi_0+\cdots+\phi_{n-1}|=n$ if and only if $\phi_0=\cdots=\phi_{n-1}$.
\end{lema}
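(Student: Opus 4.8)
The plan is to recognize this as the equality case of the triangle inequality for complex numbers. The reverse implication is immediate: if $\phi_0 = \cdots = \phi_{n-1} = \phi$ with $|\phi| = 1$, then $|\phi_0 + \cdots + \phi_{n-1}| = |n\phi| = n|\phi| = n$. So the substance lies entirely in the forward direction.

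For the forward direction, I would first set $\phi = \phi_0 + \cdots + \phi_{n-1}$ and note that since $|\phi| = n > 0$ we have $\phi \neq 0$, so we may write $\phi = n\,e^{\mathbf{i}\theta}$ for some real number $\theta$. The key idea is to project each summand onto the common direction of $\phi$. Multiplying by $e^{-\mathbf{i}\theta}$ and taking real parts gives $n = \mathrm{Re}\!\left(e^{-\mathbf{i}\theta}\phi\right) = \sum_{j=0}^{n-1} \mathrm{Re}\!\left(e^{-\mathbf{i}\theta}\phi_j\right)$. Since each $\left|e^{-\mathbf{i}\theta}\phi_j\right| = 1$, each real part $\mathrm{Re}\!\left(e^{-\mathbf{i}\theta}\phi_j\right)$ is at most $1$, and a sum of $n$ quantities each at most $1$ can equal $n$ only if every one of them equals $1$.

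The final step is to conclude that $\mathrm{Re}(z) = 1$ together with $|z| = 1$ forces $z = 1$: writing $z = e^{-\mathbf{i}\theta}\phi_j = x + \mathbf{i}y$ with $x^2 + y^2 = 1$ and $x = 1$ yields $y = 0$, hence $z = 1$. Thus $e^{-\mathbf{i}\theta}\phi_j = 1$, i.e. $\phi_j = e^{\mathbf{i}\theta}$, for every $j$, so the $\phi_j$ all coincide. There is no genuine obstacle here; the only point deserving care is the justification that equality in the termwise bound forces equality in each term, which is precisely the observation that a sum of $n$ reals, each at most $1$, attains the value $n$ only when each equals $1$.
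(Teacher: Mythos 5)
Your proof is correct and complete; the paper states this lemma without proof, declaring it straightforward, and your argument (rotating the sum onto the positive real axis, taking real parts, and using that a sum of $n$ reals each at most $1$ equals $n$ only when every term equals $1$) is exactly the standard equality-case-of-the-triangle-inequality argument the authors have in mind.
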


Now we give a complete classification of $G_n$ admitting PGST. Pal proved the following sufficient condition for the non-existence of PGST on circulant graphs.
\begin{theorem}\emph{\cite{More circulant graphs}}\label{More circulant graphs}
Let $n=mp$, where $p$ is an odd prime and $m$ is an even positive integer. Also, let $S$ be a connection set in ${\mathbb{Z}}_n$ such that  $p \nmid y$ for all $y \in S$. Then the circulant graph $\mathrm{Cay}({\mathbb{Z}}_n, S)$ does not admit PGST.
\end{theorem}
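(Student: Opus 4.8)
The plan is to argue by contradiction, using the hypothesis $p \nmid y$ to force a vanishing character sum over each coset of the order-$p$ subgroup of $\mathbb{Z}_n$, while the evenness of $m$ keeps each such coset inside a single parity class. Since $\mathrm{Cay}(\mathbb{Z}_n, S)$ is vertex-transitive and $n = mp$ is even, Theorem~\ref{Pal circulant} shows that any PGST must run from some $a$ to $a + \frac{n}{2}$; translating by the automorphism $x \mapsto x - a$, it suffices to rule out PGST from $0$ to $\frac{n}{2}$. Writing the eigenvalues as $\theta_j = \sum_{y \in S}\omega_n^{yj}$ for $j \in \mathbb{Z}_n$ (real, since $S = -S$), the transition-matrix formula gives $H(t)_{0,\frac{n}{2}} = \frac{1}{n}\sum_{j \in \mathbb{Z}_n} e^{-\mathbf{i}t\theta_j}(-1)^{j}$, because $\omega_n^{-\frac{n}{2}j} = (-1)^j$. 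If PGST from $0$ to $\frac{n}{2}$ held along a sequence $t_k$, then $H(t_k)_{0,\frac{n}{2}} \to \gamma$ with $|\gamma| = 1$, so $|H(t_k)_{0,\frac{n}{2}}| \to 1$; since each summand $e^{-\mathbf{i}t_k\theta_j}(-1)^j$ has unit modulus, passing to a subsequence by compactness of the torus $\mathbb{T}^n$ and applying Lemma~\ref{PGST lema unit modulus} to the limiting tuple forces a common limit, namely $e^{-\mathbf{i}t_k\theta_j} \to \gamma(-1)^j$ for every $j$ (along that subsequence), for a single unit-modulus $\gamma$.

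Next I would produce the contradicting integer relation among the $\theta_j$. Put $m = \frac{n}{p}$ and consider the order-$p$ subgroup $A = \{0, m, 2m, \ldots, (p-1)m\}$ together with the coset $B = 1 + A$. For each $y \in S$ we have $p \nmid y$, so $\omega_p^{y} \neq 1$ and $\sum_{i=0}^{p-1}\omega_n^{y\cdot im} = \sum_{i=0}^{p-1}\omega_p^{yi} = 0$; summing over $y \in S$ yields $\sum_{j \in A}\theta_j = 0$, and pulling out the factor $\omega_n^{y}$ first gives likewise $\sum_{j \in B}\theta_j = \sum_{y \in S}\omega_n^{y}\sum_{i=0}^{p-1}\omega_p^{yi} = 0$. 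Because $m$ is even, every index in $A$ is even and every index in $B$ is odd; in particular $A$ and $B$ are disjoint sets, each of size $p$.

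Finally I would exponentiate these two relations and take limits. For every $k$ we have $\prod_{j \in A} e^{-\mathbf{i}t_k\theta_j} = e^{-\mathbf{i}t_k\sum_{j \in A}\theta_j} = 1$ and, identically, $\prod_{j \in B} e^{-\mathbf{i}t_k\theta_j} = 1$. Letting $k \to \infty$ along the chosen subsequence and substituting $e^{-\mathbf{i}t_k\theta_j} \to \gamma(-1)^j$ gives $\prod_{j \in A}\gamma(-1)^j = \gamma^p = 1$ (all $j \in A$ even) and $\prod_{j \in B}\gamma(-1)^j = (-\gamma)^p = -\gamma^p = 1$ (all $j \in B$ odd and $p$ odd). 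Hence $\gamma^p = 1$ and $-\gamma^p = 1$, so $1 = -1$, a contradiction, and no PGST can occur. I expect the only delicate point to be the limiting step, where one must justify replacing the given PGST sequence by a subsequence along which every phase $e^{-\mathbf{i}t_k\theta_j}$ converges so that Lemma~\ref{PGST lema unit modulus} yields a well-defined common phase $\gamma$; the remaining algebra is routine, with the evenness of $m$ and the oddness of $p$ doing the essential work.
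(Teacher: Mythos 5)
Your argument is correct. Note, however, that the paper does not prove this statement at all: it is quoted verbatim from Pal's article \emph{More circulant graphs exhibiting pretty good state transfer} and used as a black box, so there is no in-paper proof to compare against. What you have written is a valid self-contained derivation. The reduction to PGST from $0$ to $\frac{n}{2}$ via Theorem~\ref{Pal circulant} and circulant symmetry is sound; the compactness-plus-Lemma~\ref{PGST lema unit modulus} step forcing $e^{-\mathbf{i}t_k\theta_j}\to\gamma(-1)^j$ along a subsequence is exactly the mechanism this paper itself uses in the proof of Lemma~\ref{QUCG PGST 2}; and the two vanishing coset sums $\sum_{j\in A}\theta_j=\sum_{j\in B}\theta_j=0$ (which rely precisely on $p\nmid y$, and whose indices lie in a single parity class precisely because $m$ is even) correctly yield $\gamma^p=1$ and $(-1)^p\gamma^p=1$, a contradiction since $p$ is odd. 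The only difference in flavour from the techniques visible in this paper is the source of the contradicting integer relation: Lemma~\ref{QUCG PGST 2} exploits two equal eigenvalues with indices of opposite parity, whereas you exploit a relation $\sum_{j\in A}\theta_j-\sum_{j\in B}\theta_j=0$ whose odd-indexed coefficients sum to $-p$; both are instances of the same obstruction (an integer eigenvalue relation with odd coefficient sum on the odd-parity class), and your version is the one that generalizes to arbitrary connection sets avoiding multiples of $p$. One presentational quibble: rather than exponentiating and taking limits of products, you could note directly that $e^{-\mathbf{i}t_k\sum_{j\in A}\theta_j}=e^{-\mathbf{i}t_k\sum_{j\in B}\theta_j}=1$ identically and pass to the limit in each factor, which is what you do; just make explicit that the convergence of each factor justifies interchanging the finite product with the limit.
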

Using Theorem~\ref{More circulant graphs}, we prove the following lemma. 
\begin{lema}\label{QUCG PGST 1}
Let $n=mp$, where $p$ is an odd prime and $m$ is an even positive integer. Then 
$G_n$ does not admit PGST.
\end{lema}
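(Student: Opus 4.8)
The plan is to reduce the statement directly to Theorem~\ref{More circulant graphs}, since the hypotheses of that theorem on $n$ (namely $n=mp$ with $p$ an odd prime and $m$ even) are already exactly those of the present lemma. The only thing left to verify is the divisibility condition on the connection set: that $p \nmid y$ for every $y$ in the connection set $T_n$ of $G_n$. Once this is established, Theorem~\ref{More circulant graphs} applies verbatim and yields that $G_n=\mathrm{Cay}({\mathbb{Z}}_n, T_n)$ does not admit PGST.

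First I would recall that $T_n = Q_n \cup (-Q_n)$, so every element $y \in T_n$ has the form $y \equiv \pm u^2 \pmod{n}$ for some $u \in U(n)$, that is, with $\gcd(u,n)=1$. Next I would exploit that $p \mid n$ (since $n=mp$): because $\gcd(u,n)=1$, it follows that $p \nmid u$, and hence $p \nmid u^2$. Since $p \mid n$, reducing modulo $n$ does not affect divisibility by $p$, so from $y \equiv \pm u^2 \pmod{n}$ we conclude $y \equiv \pm u^2 \not\equiv 0 \pmod{p}$, i.e. $p \nmid y$. As $y \in T_n$ was arbitrary, the connection set $T_n$ satisfies the hypothesis $p \nmid y$ for all $y \in T_n$.

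With this verification in hand, the final step is simply to invoke Theorem~\ref{More circulant graphs} with $S = T_n$, which immediately gives that $G_n$ does not admit PGST. I do not expect any real obstacle here: the argument is a short structural observation followed by a direct citation. The only point requiring a moment of care is the passage between the representative of $y$ in ${\mathbb{Z}}_n$ and its residue modulo $p$, where one must note that $p \mid n$ guarantees that divisibility by $p$ is preserved under reduction modulo $n$; apart from that, the proof is essentially a one-line application.
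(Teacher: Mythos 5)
Your proposal is correct and follows essentially the same route as the paper: verify that $p \nmid y$ for every $y \in T_n$ (since $y \equiv \pm u^2$ with $\gcd(u,n)=1$ forces $p \nmid u$), and then invoke Theorem~\ref{More circulant graphs}. The extra care you take about reduction modulo $n$ preserving divisibility by $p$ is a fine (if slightly more explicit) rendering of the same one-line argument.
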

\begin{proof}
Recall that $G_n=\mathrm{Cay}({\mathbb{Z}}_n, T_n)$. Let $y \in T_n$. Then $y=u^2$ or $y=-u^2$ for some $u \in U(n)$. Thus $p \nmid u$ and hence $p \nmid y$. Now the result follows from Theorem~\ref{More circulant graphs}.
\end{proof}

From Theorem~\ref{Pal circulant} and Lemma~\ref{QUCG PGST 1}, it follows that if $G_n$ admits PGST, then it is necessary that $n=2^h$ for some positive integer $h$. We use some of the techniques appeared in~\cite{State transfer on circulant Pal} to prove the last part of the following lemma.
\begin{lema}\label{QUCG PGST 2}
Let $n=2^h$, where $h$ is a positive integer. Then $G_n$ admits PGST if and only if $h \in \{1, 2, 3\}$.
\end{lema}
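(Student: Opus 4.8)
The plan is to first reduce to antipodal pretty good state transfer and then split into the cases $h\le 3$ and $h\ge 4$. By Theorem~\ref{Pal circulant}, if $G_{2^h}=\mathrm{Cay}(\mathbb{Z}_{2^h},T_{2^h})$ admits PGST from $a$ to $b$ with $a<b$, then necessarily $b=a+2^{h-1}$; moreover, since every translation $x\mapsto x+c$ is an automorphism of the circulant graph and commutes with $H(t)$, PGST between $a$ and $a+2^{h-1}$ holds for one $a$ iff it holds for all $a$, so it suffices to study the pair $0,\,2^{h-1}$. Recall from Section~\ref{sec 2} that when $b-a=n/2$ one has $\omega_n^{(a-b)a_r}=(-1)^{a_r}=(-1)^r$, so the relevant entry simplifies to
\[
H(t)_{ab}=\frac{1}{2^h}\sum_{r=0}^{2^h-1}\exp(-\mathbf{i}t\lambda_r)(-1)^r .
\]
Because $H(t)$ is symmetric and unitary, $G_{2^h}$ admits PGST from $a$ to $b$ if and only if there is a sequence $t_k$ with $|H(t_k)_{ab}|\to 1$ (unitarity then sends every other entry of column $a$ to $0$); this is the quantity I will control in both directions.

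For the sufficiency ($h\in\{1,2,3\}$), the graphs $G_2$ and $G_4$ admit PST, hence PGST, by Theorem~\ref{PST on QUCG}. For $h=3$ I would compute the spectrum of $G_8$ from Theorems~\ref{theorem character prime power} and~\ref{theorem canonical factorization}, obtaining $\lambda_0=2$, $\lambda_2=\lambda_6=0$, $\lambda_4=-2$, and $\lambda_r=\pm\sqrt{2}$ for odd $r$. Substituting into the displayed formula gives $H(t)_{0,4}=\tfrac14\bigl(\cos 2t+1-2\cos(\sqrt{2}\,t)\bigr)$, which tends to $1$ precisely when $\cos 2t\to 1$ and $\cos(\sqrt{2}\,t)\to -1$ simultaneously. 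Taking $t_k=\pi m_k$ makes $\cos 2t_k=1$ exactly; and since $\sqrt{2}$ is irrational, Weyl's equidistribution theorem lets me choose integers $m_k$ with $\sqrt{2}\,m_k\to 1\ (\mathrm{mod}\ 2)$, forcing $\cos(\sqrt{2}\,\pi m_k)\to -1$. Then $H(t_k)_{0,4}\to 1$, so $G_8$ admits PGST. (Equivalently, one checks $T_8=\{1,7\}$, so $G_8=C_8$, and invokes Theorem~\ref{Pal circulant cycle}.)

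For the necessity ($h\ge 4$), the key point is a parity clash at the eigenvalue $0$. Writing $k=h>2$ and using Theorem~\ref{theorem character prime power}(i), the integer $2^{k-3}$ is now even, so $\chi_a(Q_{2^h})=0$ for every odd $a$; hence $\lambda_r=\chi_r(Q_{2^h})+\overline{\chi_r(Q_{2^h})}=0$ for all odd $r$. On the other hand, for the even index $a=2^{h-2}$ one computes $\chi_{2^{h-2}}(Q_{2^h})=2^{h-3}(-\mathbf{i})$, whence $\lambda_{2^{h-2}}=0$ as well. Thus the eigenvalue $0$ is attained both at an odd index (say $r=1$) and at an even index ($r=2^{h-2}$). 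In the displayed sum these two terms equal $(-1)^1=-1$ and $(-1)^{2^{h-2}}=+1$ for every $t$, so they cancel identically; bounding the remaining $2^h-2$ unimodular terms by the triangle inequality gives $|H(t)_{ab}|\le (2^h-2)/2^h<1$ for all $t$. Therefore $|H(t_k)_{ab}|$ cannot approach $1$, and by the reduction above $G_{2^h}$ admits no PGST.

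The main obstacle is the sufficiency for $h=3$: unlike the necessity, it is not a finite inequality but a genuine simultaneous Diophantine approximation, demanding $\cos 2t\to 1$ and $\cos(\sqrt{2}\,t)\to -1$ at once, and this is exactly where the irrationality of $\sqrt{2}$ and a Kronecker/Weyl density argument (the techniques of~\cite{State transfer on circulant Pal}) enter; the identification $G_8=C_8$ sidesteps it, but the self-contained route runs through equidistribution. By contrast, the failure for $h\ge 4$ is robust and uniform: the persistent $\pm1$ cancellation forced by the even/odd clash at eigenvalue $0$ caps $|H(t)_{ab}|$ strictly below $1$ for \emph{every} $t$, which is much stronger than a mere failure of convergence.
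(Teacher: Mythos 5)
Your proof is correct, and it reaches the conclusion by a route that overlaps with but does not coincide with the paper's. For sufficiency, the paper simply notes $G_2$ is $K_2$, $G_4=C_4$, $G_8=C_8$ and invokes Theorem~\ref{Pal circulant cycle}; you mention this shortcut but also give a valid self-contained argument for $G_8$ via the explicit spectrum and a Weyl/Kronecker equidistribution choice of $t_k=\pi m_k$, which is essentially a re-derivation of the cycle result. For necessity ($h\ge 4$), both arguments hinge on the same spectral fact — the eigenvalue $0$ is attained at an odd index and at the even index $2^{h-2}$ (the paper uses $r=3$, you use $r=1$; your stronger observation that $\lambda_r=0$ for \emph{every} odd $r$ when $h\ge 4$ is also correct, since $2^{h-3}$ is even and Theorem~\ref{theorem character prime power}(i) then vanishes on all odd residues) — but the executions differ. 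The paper passes to a convergent subsequence on the compact unit circle and applies Lemma~\ref{PGST lema unit modulus} to force all phases $\exp(-\mathbf{i}[t_k\lambda_r+\pi a_r])$ to a common limit $\gamma$, then reads off the contradiction $\gamma=-1$ and $\gamma=+1$ from the two indices. You instead note that the two corresponding terms of $H(t)_{ab}$ are identically $-1$ and $+1$, cancel for every $t$, and conclude by the triangle inequality that $|H(t)_{ab}|\le (2^h-2)/2^h<1$ uniformly in $t$. Your version is marginally stronger (it bounds the achievable fidelity away from $1$ for all times, rather than merely refuting convergence) and avoids the compactness step, at the cost of first justifying the standard equivalence between PGST from $a$ to $b$ and $|H(t_k)_{ab}|\to 1$, which you do correctly via symmetry and unitarity of $H(t)$.
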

\begin{proof}
Note that $G_2$ is the path on two vertices that admits PGST. Also, $G_4=C_4$ and $G_8=C_8$. Theorem~\ref{Pal circulant cycle} yields that the graphs $G_4$ and $G_8$ admit PGST. Now we consider the case that $h > 3$. Suppose that the graph $G_n$ admits PGST between the vertices $a$ and $b$ with $a < b$. Then  there exists a real sequence $\{t_k\}$ and a complex number $\gamma$ of unit modulus such that 
$$\lim_{k\to\infty} H(t_k){\textbf{e}_{a}}=\gamma{\textbf{e}_{b}}.$$
This implies that 
$$\lim_{k\to\infty} \sum_{r=0}^{n-1} \exp(-\textbf{i} t_k \lambda_r) {\omega_n}^{(a-b)a_r} =  n\gamma.$$
Theorem~\ref{Pal circulant} gives that $b=a+\frac{n}{2}$. Then we have
$$\lim_{k\to\infty} \sum_{r=0}^{n-1} \exp(-\textbf{i}[t_k \lambda_r+\pi a_r])=n\gamma.$$
Since the unit circle is sequentially compact, there exists a subsequence $\{\exp(-\textbf{i} [t^\prime_k \lambda_r + \pi a_r])\}$ of the sequence $\{\exp(-\textbf{i} [t_k \lambda_r + \pi a_r])\}$ and a complex number $\zeta_r$ of unit modulus for $0 \leq r \leq n-1$ such that  
$$\lim_{k\to\infty} \exp(-\textbf{i}[t^\prime_k \lambda_r+\pi a_r])=\zeta_r.$$
Then we have
\begin{equation}\label{YY}
\sum_{r=0}^{n-1} \zeta_r=n \gamma.
\end{equation}
This implies  
$\left | \sum_{r=0}^{n-1} \zeta_r \right |=n$. Now applying Lemma~\ref{PGST lema unit modulus}, from Equation~(\ref{YY}) we obtain that 
$$\zeta_r=\gamma~~~\mathrm{for}~0 \leq r \leq n-1.$$
Thus we have 
\begin{equation}\label{YYY}
\lim_{k\to\infty} \exp(-\textbf{i}[t^\prime_k \lambda_r+\pi a_r])=\gamma~~~\mathrm{for}~0 \leq r \leq n-1.
\end{equation}
Using Lemma~\ref{lemma 1.2}, Theorem~\ref{theorem character prime power} and Theorem~\ref{theorem canonical factorization}, we obtain that $\lambda_3=0=\lambda_{2^{h-2}}$. Now putting $r=3$ in Equation~(\ref{YYY}), we have $\gamma=-1$. However, $r=2^{h-2}$ gives $\gamma=1$, a contradiction. Thus, the graph $G_n$ does not admit PGST.  
\end{proof}
Combining Lemma~\ref{QUCG PGST 1} and Lemma~\ref{QUCG PGST 2}, we present the following theorem.
\begin{theorem}
The quadratic unitary Cayley graph $G_n$ admits PGST if and only if $n \in \{2, 4, 8\}$. 
\end{theorem}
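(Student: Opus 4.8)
The plan is simply to combine the two immediately preceding lemmas, since every ingredient has already been established. The strategy has two stages: first isolate the powers of two as the only possible candidates, and then invoke Lemma~\ref{QUCG PGST 2} to determine exactly which of them work.

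For the necessary direction, I would begin by assuming that $G_n$ admits PGST. Theorem~\ref{Pal circulant} immediately forces $n$ to be even (and the state transfer to be between antipodal vertices $a$ and $a+\tfrac{n}{2}$, though only the parity of $n$ is needed here). Now suppose, toward a contradiction, that $n$ is not a power of two. Then $n$ possesses an odd prime divisor $p$, and I would write $n=mp$ with $m=n/p$. The key parity check is that $m$ is even: this is immediate because $n$ is even while $p$ is odd. With $n=mp$, $p$ an odd prime, and $m$ even, Lemma~\ref{QUCG PGST 1} applies and yields that $G_n$ does \emph{not} admit PGST, contradicting our assumption. Hence $n=2^h$ for some positive integer $h$.

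It then remains only to quote Lemma~\ref{QUCG PGST 2}, which asserts that among the graphs $G_{2^h}$, PGST occurs precisely when $h\in\{1,2,3\}$, i.e.\ when $n\in\{2,4,8\}$. The same lemma also supplies the converse, since it guarantees that each of $G_2$, $G_4$, and $G_8$ genuinely admits PGST. Combining the two directions establishes the stated equivalence.

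Because each step is a direct application of an already-proved result, I do not expect any real obstacle in this argument. The only point requiring a moment's care is confirming that the cofactor $m=n/p$ is even, which is exactly the hypothesis Lemma~\ref{QUCG PGST 1} demands; this follows at once from the parity of $n$ obtained via Theorem~\ref{Pal circulant}. The substantive work has all been carried out in Lemmas~\ref{QUCG PGST 1} and~\ref{QUCG PGST 2}, so the theorem is essentially a bookkeeping step assembling those two facts.
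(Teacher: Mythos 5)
Your proposal is correct and follows exactly the paper's own argument: Theorem~\ref{Pal circulant} together with Lemma~\ref{QUCG PGST 1} (with the same parity observation that $m=n/p$ is even) reduces the problem to $n=2^h$, and Lemma~\ref{QUCG PGST 2} finishes both directions. No gaps.
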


\section{PGFR on quadratic unitary Cayley graphs}\label{sec 6}
In this section, we obtain a complete classification of quadratic unitary Cayley graphs admitting PGFR in terms of the number of vertices. Recall that if $p$ is an odd prime factor of $n$, then $p \nmid y$ for all $y \in T_n$. Thus, the following lemma follows from Theorem~\ref{PGFR non existence circulant}.
\begin{lema}\label{ Quadratic unitary Cayley graph 2}
Let $n$ be a positive integer such that $2pq \mid n$, where $p$ and $q$ are two distinct odd primes. Then $G_n$ does not admit PGFR.
\end{lema}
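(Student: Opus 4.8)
The plan is to recognize this statement as a direct application of Theorem~\ref{PGFR non existence circulant}, exactly mirroring the argument used for the unitary Cayley graph in Lemma~\ref{unitary Cayley graph 2}. The hypothesis $2pq \mid n$ guarantees that both $p$ and $q$ are odd prime divisors of $n$, so the arithmetic condition required by that theorem — that no element of the connection set is divisible by $p$ or by $q$ — is the only thing I need to verify for $G_n = \mathrm{Cay}({\mathbb{Z}}_n, T_n)$.

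First I would unpack the connection set. By definition $T_n = Q_n \cup (-Q_n)$, so every $y \in T_n$ has the form $y = u^2$ or $y = -u^2$ with $u \in U(n)$, that is $\gcd(u, n) = 1$. Since $p \mid n$ and $q \mid n$, coprimality forces $p \nmid u$ and $q \nmid u$, whence $p \nmid u^2$ and $q \nmid u^2$, giving $p \nmid y$ and $q \nmid y$ for every $y \in T_n$. With this coprimality in hand, the set $S = T_n$ satisfies precisely the hypotheses of Theorem~\ref{PGFR non existence circulant}, and I would conclude at once that $G_n$ does not admit PGFR.

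There is essentially no genuine obstacle here: the entire content of the lemma is already captured by the cited non-existence theorem, and the only (routine) check is the divisibility observation that the text preceding the statement has in fact already recorded. The proof is therefore a single invocation of Theorem~\ref{PGFR non existence circulant}, once the membership condition on $T_n$ has been spelled out.
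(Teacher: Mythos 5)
Your proposal is correct and matches the paper's argument exactly: the paper also observes that every $y \in T_n$ is of the form $\pm u^2$ with $\gcd(u,n)=1$, hence not divisible by any odd prime factor of $n$, and then invokes Theorem~\ref{PGFR non existence circulant}. Nothing further is needed.
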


\begin{corollary}\label{corollary for quadratic unitary Cayley graph}
If the quadratic unitary Cayley graph $G_n$ admits PGFR, then it is necessary that $n = 2^h p^s$ for some $h \in \mathbb{N}$, $s \in \mathbb{N} \cup \{0\}$ and odd prime $p$. 
\end{corollary}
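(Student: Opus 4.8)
The plan is to deduce the corollary directly from the two ingredients already established: the parity constraint coming from Theorem~\ref{PGFR circulant} and the divisibility obstruction of Lemma~\ref{ Quadratic unitary Cayley graph 2}. Suppose $G_n=\mathrm{Cay}({\mathbb{Z}}_n, T_n)$ admits PGFR, say from a vertex $a$ to a vertex $b$ with $a < b$. First I would invoke condition (i) of Theorem~\ref{PGFR circulant} to conclude that $n$ is even, so that $2 \mid n$ and we may write $n = 2^h m$ with $h \in \mathbb{N}$ and $m$ odd.

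Next I would argue that the odd part $m$ has at most one distinct prime factor. If instead $m$ were divisible by two distinct odd primes $p$ and $q$, then $pq \mid m$, and since $\gcd(2, pq) = 1$ while $2 \mid n$, we would obtain $2pq \mid n$. But Lemma~\ref{ Quadratic unitary Cayley graph 2} asserts that in this situation $G_n$ does not admit PGFR, contradicting our hypothesis. Hence $m$ is a power of a single odd prime, that is, $m = p^s$ for some odd prime $p$ and some $s \in \mathbb{N} \cup \{0\}$, where $s = 0$ corresponds to $m = 1$. Combining the two observations gives $n = 2^h p^s$ with $h \in \mathbb{N}$, $s \in \mathbb{N} \cup \{0\}$ and $p$ an odd prime, which is exactly the asserted form.

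I do not anticipate any genuine obstacle, since the corollary is a formal consequence of the two preceding results; the only points deserving a word of care are the coprimality step that upgrades ``$2 \mid n$ and $pq \mid n$'' to ``$2pq \mid n$'' so that Lemma~\ref{ Quadratic unitary Cayley graph 2} applies, and the bookkeeping of the degenerate case $m = 1$, which is admitted by taking $s = 0$ and accounts for $n$ being a pure power of two.
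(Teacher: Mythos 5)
Your argument is correct and follows exactly the route the paper intends: condition (i) of Theorem~\ref{PGFR circulant} forces $n$ to be even, and Lemma~\ref{ Quadratic unitary Cayley graph 2} rules out two distinct odd prime divisors, leaving $n = 2^h p^s$. This matches the paper's (implicit) derivation, which mirrors the analogous step for unitary Cayley graphs, so there is nothing to add.
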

Clearly, $G_n$ admits PGFR for $n=2$ and $n=4$. Now we 
assume that $n \geq 6$. In the following theorem, we obtain an infinite family of $G_n$ admitting PGFR.
\begin{lema}\label{2p first}
Let $n=2p$, where $p$ is a prime such that $p \equiv 3~(\mathrm{mod}~4)$. Then the graph $G_n$ admits PGFR. 
\end{lema}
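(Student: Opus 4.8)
The plan is to apply Theorem~\ref{PGFR circulant}. Condition~(i) holds trivially since $n=2p$ is even: I take the antipodal pair $a$ and $b=a+p$. The real content is to compute the spectrum of $G_{2p}$ explicitly and then check condition~(ii), which is a purely arithmetic statement about integer relations among the eigenvalue differences.

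First I would determine the eigenvalues. Writing $\mathbb{Z}_{2p}\cong\mathbb{Z}_2\times\mathbb{Z}_p$ and observing that $-1\in Q_2$ while $-1\notin Q_p$ (because $p\equiv 3~(\mathrm{mod}~4)$), case~(ii) of Theorem~\ref{theorem canonical factorization} gives
\[
\lambda_{(a_1,a_2)}=\chi_{a_1}(Q_2)\left[\chi_{a_2}(Q_p)+\overline{\chi_{a_2}(Q_p)}\right],\qquad a_1\in\mathbb{Z}_2,\ a_2\in\mathbb{Z}_p.
\]
By Theorem~\ref{theorem character prime power}, $\chi_{a_1}(Q_2)=\cos(a_1\pi)\in\{1,-1\}$. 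For the odd factor, since $p\equiv 3~(\mathrm{mod}~4)$ one has $\frac{1+\mathbf{i}^p}{1+\mathbf{i}}=-\mathbf{i}$, so the Gauss-type sum $\chi_{a_2}(Q_p)$ has real part $-\tfrac12$ for every $a_2\neq 0$; hence $\chi_{a_2}(Q_p)+\overline{\chi_{a_2}(Q_p)}=-1$ when $a_2\neq 0$ and $=p-1$ when $a_2=0$. Translating back through the Chinese Remainder Theorem (so that $a_1=r\bmod 2$ and $a_2=r\bmod p$) yields precisely the eigenvalue list of Lemma~\ref{unitary Cayley graph 3}: $\lambda_0=p-1$, $\lambda_p=1-p$, $\lambda_r=-1$ for even $r\neq 0$, and $\lambda_r=1$ for the remaining odd $r\neq p$. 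In particular $G_{2p}$ is integral, consistent with Lemma~\ref{2p^s}.

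With the spectrum in hand, the verification of condition~(ii) is identical to the argument in Lemma~\ref{unitary Cayley graph 3}. Suppose integers $\ell_1,\ldots,\ell_{n-1}$ satisfy $\sum_{r=1}^{n-1}\ell_r(\lambda_r-\lambda_0)=0$. Substituting the differences $\lambda_r-\lambda_0\in\{-p,\,2-2p,\,2-p\}$ and collecting terms gives $(2-p)\sum_{r\ \mathrm{odd}}\ell_r=p\bigl(\ell_p+\sum_{r\ \mathrm{even}}\ell_r\bigr)$, whence $2\sum_{r\ \mathrm{odd}}\ell_r\equiv 0\pmod p$. As $p$ is odd this forces $p\mid\sum_{r\ \mathrm{odd}}\ell_r$, so $\sum_{r\ \mathrm{odd}}\ell_r\neq\pm 1$. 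Theorem~\ref{PGFR circulant} then yields that $G_{2p}$ admits PGFR.

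The only genuinely new step relative to the unitary Cayley graph case is the spectral computation, and the main obstacle there is confirming that the imaginary parts of $\chi_{a_2}(Q_p)$ cancel in $\chi_{a_2}(Q_p)+\overline{\chi_{a_2}(Q_p)}$ so that the spectrum collapses to the four integer values above. The hypothesis $p\equiv 3~(\mathrm{mod}~4)$ is exactly what drives this, both in producing the purely imaginary factor $-\mathbf{i}$ and in making the coefficient pattern of the eigenvalue differences divisible by $p$ in the right way; once this is in place the PGFR criterion follows essentially verbatim from the $X_{2p}$ analysis.
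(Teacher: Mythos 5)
Your proposal is correct and follows essentially the same route as the paper: decompose $\mathbb{Z}_{2p}\cong\mathbb{Z}_2\times\mathbb{Z}_p$, use the character-sum formulas to show the spectrum collapses to the integer values $p-1$, $1-p$, $\pm 1$ (matching Lemma~\ref{unitary Cayley graph 3}), and then verify condition~(ii) of Theorem~\ref{PGFR circulant} via the divisibility relation $p\mid\sum_{r~\mathrm{odd}}\ell_r$. The only difference is that you spell out the Gauss-sum computation (the factor $\frac{1+\mathbf{i}^p}{1+\mathbf{i}}=-\mathbf{i}$ and the cancellation of imaginary parts) that the paper leaves implicit.
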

\begin{proof}
We write ${\mathbb{Z}}_{2p}={\mathbb{Z}}_{2} \times {\mathbb{Z}}_{p}$. Then we have $\lambda_{(a, b)}=\chi_a(Q_2) \left[\chi_b(Q_p)+\overline{\chi_b(Q_p)} \right]$, where $a \in {\mathbb{Z}}_{2}$ and $b \in {\mathbb{Z}}_{p}$. This implies that $\lambda_0=\lambda_{(0, 0)}=p-1$ and $\lambda_p=\lambda_{(1, 0)}=1-p$. If $r ~(\neq 0)$ is even, then $\lambda_r=\lambda_{(0, b)}=-1$, where $b \in \mathbb{Z}_p \setminus \{0\}$. If $r ~(\neq p)$ is odd, then $\lambda_r=\lambda_{(1, b)}=1$, where $b \in \mathbb{Z}_p \setminus \{0\}$. Now, proceeding as in the proof of Lemma~\ref{unitary Cayley graph 3}, it can be proved that the graph $G_n$ admits PGFR.
\end{proof}

The following theorem provides another infinite family of $G_n$ admitting PGFR.
\begin{lema}\label{2p second}
Let $n=2p$, where $p$ is a prime such that $p \equiv 1~(\mathrm{mod}~4)$. Then $G_n$ admits PGFR. 
\end{lema}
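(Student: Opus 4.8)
The plan is to first compute the spectrum of $G_{2p}$ explicitly and then feed it into the PGFR criterion of Theorem~\ref{PGFR circulant}. Writing ${\mathbb{Z}}_{2p} \cong {\mathbb{Z}}_2 \times {\mathbb{Z}}_p$ and observing that $p \equiv 1~(\mathrm{mod}~4)$ forces $-1 \in Q_p$ (while $-1 \in Q_2$ trivially), Theorem~\ref{theorem canonical factorization}(i) expresses the eigenvalues as products $\chi_{a_1}(Q_2)\,\chi_{a_2}(Q_p)$ with $a_1 \in {\mathbb{Z}}_2$ and $a_2 \in {\mathbb{Z}}_p$. Since $\mathbf{i}^{\,p}=\mathbf{i}$ when $p \equiv 1~(\mathrm{mod}~4)$, the factor $\frac{1+\mathbf{i}^{\,p}}{1+\mathbf{i}}$ appearing in Theorem~\ref{theorem character prime power}(ii) equals $1$, so $\chi_{a_2}(Q_p) \in \left\{\frac{p-1}{2},\, \frac{\sqrt{p}-1}{2},\, \frac{-\sqrt{p}-1}{2}\right\}$ according as $a_2=0$, $a_2$ is a quadratic residue, or $a_2$ is a quadratic non-residue of $p$, while $\chi_{a_1}(Q_2)=(-1)^{a_1}$. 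Through the correspondence $r \leftrightarrow (r \bmod 2,\ r \bmod p)$ this gives $\lambda_0=\frac{p-1}{2}$ and writes each $\lambda_r$ as one of six values, every one a rational combination of $1$ and $\sqrt{p}$; here the parity of $r$ is governed by $r \bmod 2$ and the sign attached to $\sqrt{p}$ by the Legendre symbol of $r \bmod p$.

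Condition (i) of Theorem~\ref{PGFR circulant} is immediate: $n=2p$ is even, and I take $b=a+p$. For condition (ii), I would begin with an integer relation $\sum_{r=1}^{n-1}\ell_r(\lambda_r-\lambda_0)=0$ and substitute the explicit differences, which lie in $\left\{\frac{\sqrt{p}-p}{2},\, \frac{-\sqrt{p}-p}{2},\, 1-p,\, \frac{2-\sqrt{p}-p}{2},\, \frac{\sqrt{p}+2-p}{2}\right\}$. I would partition the index set $\{1,\dots,2p-1\}$ into exactly five types (even residue, even non-residue, $r=p$, odd residue, odd non-residue) and group the $\ell_r$ accordingly. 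Because $\{1,\sqrt{p}\}$ is linearly independent over $\mathbb{Q}$ and all grouped coefficients are rational, the single relation splits into two scalar equations: the vanishing of the coefficient of $\sqrt{p}$, and the vanishing of the rational part.

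The decisive equation is the rational one. Writing $\ell_p$ for the coefficient at $r=p$, and $C,\,D$ for the sums of $\ell_r$ over the odd indices $r\neq p$ whose residue mod $p$ is, respectively, a quadratic residue and a non-residue, one has $\sum_{r~\mathrm{odd}}\ell_r=\ell_p+C+D$. After clearing denominators, the rational equation rearranges to $2(\ell_p+C+D)=p\,(A+B+C+D+2\ell_p)$, where $A,\,B$ are the analogous sums over the even indices. As $p$ is odd this forces $p \mid (\ell_p+C+D)=\sum_{r~\mathrm{odd}}\ell_r$, and since $p \geq 5$ we conclude $\sum_{r~\mathrm{odd}}\ell_r \neq \pm 1$; thus condition (ii) holds and Theorem~\ref{PGFR circulant} yields PGFR. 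The main difficulty is organizational rather than conceptual: one must correctly sort the index set by parity and by Legendre symbol and then isolate the rational part of the relation. This is precisely where the argument departs from Lemma~\ref{2p first}, in which the eigenvalues were integers and the divisibility $p \mid \sum_{r~\mathrm{odd}}\ell_r$ fell out of a single integer equation; here the irrationality of $\sqrt{p}$ must first be removed via $\mathbb{Q}$-linear independence before the same divisibility emerges.
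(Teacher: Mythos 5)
Your proposal is correct and follows essentially the same route as the paper: decompose ${\mathbb{Z}}_{2p}\cong{\mathbb{Z}}_2\times{\mathbb{Z}}_p$, compute the six eigenvalue classes via Theorems~\ref{theorem character prime power} and~\ref{theorem canonical factorization}, partition the indices by parity and Legendre symbol, use the $\mathbb{Q}$-linear independence of $\{1,\sqrt{p}\}$ to isolate the rational part of the relation, and extract $p\mid\sum_{r~\mathrm{odd}}\ell_r$ exactly as in the paper's reduction to $(2-p)\sum_{r~\mathrm{odd}}\ell_r=p\left(\ell_p+\sum_{r~\mathrm{even}}\ell_r\right)$. Your final rational equation $2(\ell_p+C+D)=p(A+B+C+D+2\ell_p)$ is an equivalent rearrangement of the paper's, so the arguments coincide.
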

\begin{proof}
We us write ${\mathbb{Z}}_{2p}={\mathbb{Z}}_{2} \times {\mathbb{Z}}_{p}$. Then we have $\lambda_{(a, b)}=\chi_a(Q_2)\chi_b(Q_p)$, where $a \in {\mathbb{Z}}_{2}$ and $b \in {\mathbb{Z}}_{p}$. This implies that $\lambda_0=\lambda_{(0, 0)}=\frac{p-1}{2}$ and $\lambda_p=\lambda_{(1, 0)}=\frac{1-p}{2}$. If $r~(\neq 0)$ is even, then we have
\[\lambda_r=\lambda_{(0,b)}=\left\{ \begin{array}{rl} \frac{-1+\sqrt{p}}{2}  & \textrm{if}~b~\textrm{is a quadratic residue of } p \\
 \frac{-1-\sqrt{p}}{2} & \textrm{if}~b~\textrm{is a quadratic non-residue of } p. ~\\
  \end{array}\right. \]
Also, for an odd $r$ other than $p$, we have 
\[\lambda_r=\lambda_{(1,b)} = \left\{ \begin{array}{rl} \frac{1-\sqrt{p}}{2}  & \textrm{if}~b~\textrm{is a quadratic residue of } p \\
 \frac{1 + \sqrt{p}}{2} & \textrm{if}~b~\textrm{is a quadratic non-residue of } p. ~\\
  \end{array}\right. \]
Define the sets $E_1, E_2, O_1$ and $O_2$ given by
\begin{align*}
E_1&=\{r \colon r~\mathrm{is~even~and}~r \neq 0~\mathrm{such~that}~\lambda_r=\lambda_{(0,b)}~\mathrm{for~some~quadratic~residue~}b~of~p\},\\
E_2&=\{r \colon r~\mathrm{is~even~and}~r \neq 0~\mathrm{such~that}~\lambda_r=\lambda_{(0,b)}~\textrm{for~some~quadratic~non-residue~}b~of~p\},\\
O_1&=\{r \colon r~\mathrm{is~odd~and}~r \neq p~\mathrm{such~that}~\lambda_r=\lambda_{(1,b)}~\mathrm{for~some~quadratic~residue~}b~of~p\}~\mathrm{and}\\
O_2&=\{r \colon r~\mathrm{is~odd~and}~r \neq p~\mathrm{such~that}~\lambda_r=\lambda_{(1,b)}~\textrm{for~some~quadratic~non-residue~}b~of~p\}.
\end{align*}
Suppose that the integers $\ell_1,\ldots,\ell_{n-1}$ satisfy the relation 
\begin{align*}
\sum_{r=1}^{n-1}{\ell_r}(\lambda_r-\lambda_0)=0.
\end{align*}
Then we have 
\begin{align*}
\resizebox{0.999\hsize}{!}{$
-\sum_{r \in E_1}\ell_r-\sum_{r \in E_2}\ell_r+\sum_{r \in O_1}\ell_r+\sum_{r \in O_2}\ell_r+\left(\sum_{r \in E_1}\ell_r-\sum_{r \in E_2}\ell_r-\sum_{r \in O_1}\ell_r+\sum_{r \in O_2}\ell_r \right) \sqrt{p} 
= (p-1)\left(\ell_p+\sum_{r=1}^{n-1}\ell_r \right).$}
\end{align*}
This implies that
\begin{align*}
-\sum_{r \in E_1}\ell_r-\sum_{r \in E_2}\ell_r+\sum_{r \in O_1}\ell_r+\sum_{r \in O_2}\ell_r=(p-1)\left(\ell_p+\sum_{r=1}^{n-1}\ell_r \right).
\end{align*}
This further implies that
\begin{align*}
(2-p)\sum_{r~\mathrm{odd}}\ell_r=p \left(\ell_p+\sum_{r~\mathrm{even}}\ell_r \right).
\end{align*}
Now the rest of the proof is similar to that of Lemma~\ref{unitary Cayley graph 3}. 
\end{proof}
Note that Lemma~\ref{2p first} as well as Lemma~\ref{2p second} provide infinite families of quadratic unitary Cayley graphs admitting PGFR that fail to admit PGST. In the next four lemmas, we prove that $G_n$ does not admit PGFR if $n=4p$ or $n=8p$, where $p$ is an odd prime.

\begin{lema}\label{4p first}
Let $n=4p$, where $p$ is a prime such that $p \equiv 3~(\mathrm{mod}~4)$. Then 
$G_n$ does not admit PGFR. 
\end{lema}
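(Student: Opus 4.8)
The plan is to invoke Theorem~\ref{PGFR circulant} and to exploit the fact that its condition~(ii) depends only on the eigenvalues, not on the particular pair of vertices. Consequently, to rule out PGFR between \emph{every} pair of vertices it suffices to violate the implication in condition~(ii): I will exhibit integers $\ell_1,\ldots,\ell_{n-1}$ satisfying
$$\sum_{r=1}^{n-1}\ell_r(\lambda_r-\lambda_0)=0 \qquad\text{while}\qquad \sum_{r\ \mathrm{odd}}\ell_r=1 .$$
This is precisely the device used in Lemma~\ref{unitary Cayley graph 4} for $X_{4p}$, and once such a relation is produced, Theorem~\ref{PGFR circulant} immediately yields that $G_{4p}$ does not admit PGFR.

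The first step is to record the relevant eigenvalues. Writing ${\mathbb Z}_{4p}={\mathbb Z}_{4}\times{\mathbb Z}_{p}$, I note that $-1\notin Q_4$ (by Lemma~\ref{lemma 1.2}) and $-1\notin Q_p$ (since $p\equiv 3\ (\mathrm{mod}\ 4)$), so Theorem~\ref{theorem canonical factorization}(ii) gives $\lambda_{(a_1,a_2)}=2\,\mathrm{Re}\!\left(\chi_{a_1}(Q_4)\chi_{a_2}(Q_p)\right)$. Using Theorem~\ref{theorem character prime power}, one has $\chi_{a_1}(Q_4)=\exp(-\tfrac{a_1\pi\mathbf{i}}{2})$, while for $a_2\neq 0$, the factor $\frac{1+\mathbf{i}^p}{1+\mathbf{i}}$ collapses to $-\mathbf{i}$ because $p\equiv3\ (\mathrm{mod}\ 4)$, giving $\chi_{a_2}(Q_p)=-\tfrac12-\tfrac{\mathbf{i}\sqrt p}{2}\left(a_2/p\right)$. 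Taking real parts, the eigenvalues I actually need are $\lambda_0=\lambda_{(0,0)}=p-1$; an odd-indexed eigenvalue $\lambda_{(1,0)}=0$; and the even-indexed eigenvalues $\lambda_{(0,a_2)}=-1$ and $\lambda_{(2,a_2)}=1$ for $a_2\neq 0$. The parity bookkeeping is governed by the Chinese Remainder Theorem: the index $r$ is odd exactly when $r\bmod 4=a_1\in\{1,3\}$, so $(1,0)$ sits at an odd index whereas $(0,a_2)$ and $(2,a_2)$ sit at even indices.

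The second step is to assemble the relation from these integral eigenvalues (the $\sqrt p$-valued eigenvalues are never used, so no irrationalities enter). Set $\ell_{(1,0)}=1$, $\ell_{(0,1)}=-\tfrac{p-1}{2}$, $\ell_{(2,1)}=\tfrac{p-1}{2}$, and all remaining $\ell_r=0$; note these are genuine integers since $p$ is odd. The corresponding differences are $\lambda_{(1,0)}-\lambda_0=1-p$, $\lambda_{(0,1)}-\lambda_0=-p$, and $\lambda_{(2,1)}-\lambda_0=2-p$, and a direct check gives $(1-p)+\bigl(-\tfrac{p-1}{2}\bigr)(-p)+\tfrac{p-1}{2}(2-p)=(1-p)+(p-1)=0$, so the required linear relation holds. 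Since only the index $(1,0)$ is odd, $\sum_{r\ \mathrm{odd}}\ell_r=1$, and the proof concludes exactly as in Lemma~\ref{unitary Cayley graph 3} and Lemma~\ref{unitary Cayley graph 4} via Theorem~\ref{PGFR circulant}.

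The only delicate points are the character-sum evaluations and the parity translation, not the final algebra. The main care is to get $\chi_{a_2}(Q_p)$ right for the case $p\equiv3\ (\mathrm{mod}\ 4)$ (the sign hidden in $\mathbf{i}^p=-\mathbf{i}$) and to identify correctly which CRT pairs $(a_1,a_2)$ correspond to odd indices; once the eigenvalue $0$ is located at an odd index and the eigenvalues $\pm1$ at even indices, solvability of the integral relation is guaranteed by $\gcd(p,\,2-p)=1$, and the explicit coefficients above realize it.
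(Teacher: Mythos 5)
Your proposal is correct and follows essentially the same route as the paper: both identify the eigenvalues $p-1$, $0$, $-1$, $1$ (with the $0$ at an odd index via CRT and the $\pm1$ at even indices) and then exhibit the identical integer relation $\ell=1,\ \tfrac{1-p}{2},\ \tfrac{p-1}{2}$ violating condition (ii) of Theorem~\ref{PGFR circulant}. Your character-sum computations and parity bookkeeping check out, so no issues.
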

\begin{proof}
We write ${\mathbb{Z}}_{4p}={\mathbb{Z}}_{p} \times {\mathbb{Z}}_{4}$. Then we have $\lambda_{(a, b)}=\chi_a(Q_p)\chi_b(Q_4)+\overline{\chi_a(Q_p)\chi_b(Q_4)}$, where $a \in {\mathbb{Z}}_{p}$ and $b \in {\mathbb{Z}}_{4}$. This implies that $\lambda_{0}=\lambda_{(0, 0)}=p-1$, $\lambda_{3p}=\lambda_{(0, 3)}=0$, $\lambda_{2}=\lambda_{(1, 2)}=1$ and $\lambda_{4}=\lambda_{(1, 0)}=-1$. Define the integers $\ell_1, \ldots, \ell_{n-1}$ such that 
\[{\ell}_r=\left\{ \begin{array}{ll} 1 & \textrm{ if } r=3p\\
 \frac{p-1}{2} & \textrm{ if } r=2~\\
 \frac{1-p}{2} & \textrm{ if } r=4~\\
 0 & \textrm{ otherwise. } \end{array}\right. \]
Then $\sum_{r=1}^{n-1}{\ell_r}(\lambda_r-\lambda_0) = 0$ and $\sum_{r~\mathrm{odd}} \ell_r = 1$. Now, Theorem~\ref{PGFR circulant} yields that $G_n$ does not admit PGFR.
\end{proof}

\begin{lema}\label{4p second}
Let $n=4p$, where $p$ is a prime such that $p \equiv 1~(\mathrm{mod}~4)$. Then 
$G_n$ does not admit PGFR. 
\end{lema}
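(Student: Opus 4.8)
The goal is to show that $G_{4p}$ does not admit PGFR when $p \equiv 1 \Mod{4}$, and the natural strategy is to exhibit an explicit integer combination of eigenvalue differences that vanishes yet produces an odd-index sum equal to $\pm 1$, thereby violating condition (ii) of Theorem~\ref{PGFR circulant}. This mirrors the approach used in Lemmas~\ref{unitary Cayley graph 4}, \ref{unitary Cayley graph 5} and \ref{4p first}, where a short witness vector $(\ell_r)$ with $\sum_{r~\mathrm{odd}}\ell_r=1$ defeats PGFR. The plan is first to compute the relevant eigenvalues using the decomposition ${\mathbb{Z}}_{4p}={\mathbb{Z}}_p \times {\mathbb{Z}}_4$, then to locate integer-valued eigenvalues among them and build the witness from their differences.

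First I would write $\lambda_{(a,b)}=\chi_a(Q_p)\left[\chi_b(Q_4)+\overline{\chi_b(Q_4)}\right]$, since $p\equiv 1\Mod 4$ means $-1\in Q_p$ (so the ${\mathbb{Z}}_p$ factor contributes an already-real character sum via Theorem~\ref{theorem canonical factorization}(i)-style behavior) while $-1\notin Q_4$ forces the conjugate-sum structure on the ${\mathbb{Z}}_4$ factor. Using Theorem~\ref{theorem character prime power}, $\chi_b(Q_4)=\exp(-\frac{b\pi\mathbf{i}}{2})$, so $\chi_b(Q_4)+\overline{\chi_b(Q_4)}=2\cos(\frac{b\pi}{2})$, which equals $2,0,-2,0$ for $b=0,1,2,3$. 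The key integer eigenvalues I expect to extract are those where the $\chi_a(Q_p)$ factor is itself rational: namely $a=0$, giving $\chi_0(Q_p)=\frac{p-1}{2}$. Combining, $\lambda_{(0,0)}=p-1$, $\lambda_{(0,2)}=-(p-1)$, and $\lambda_{(0,1)}=\lambda_{(0,3)}=0$. These four are rational, and I would translate the index pairs $(a,b)$ back to residues $r\in\{0,\dots,4p-1\}$ via CRT to read off whether each $r$ is odd or even.

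The decisive step is assembling the witness. Taking the index corresponding to $(0,1)$ (an eigenvalue $0$) with coefficient $1$ and choosing the parity of that index to be odd, then offsetting with integer multiples of the differences $\lambda_{(0,0)}-\lambda_0=0$ and $\lambda_{(0,2)}-\lambda_0=-2(p-1)$ placed at even indices, I would arrange $\sum_{r=1}^{n-1}\ell_r(\lambda_r-\lambda_0)=0$ while keeping $\sum_{r~\mathrm{odd}}\ell_r=1$. The cleanest version likely uses the two zero eigenvalues at $(0,1)$ and $(0,3)$: one is at an odd residue and one at an even residue (their residues differ by $2p$, which is even, so they share parity — I must check this carefully), so I may instead need to pair a zero-eigenvalue odd index against an even index whose eigenvalue difference from $\lambda_0$ is rational and cancellable. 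The safest construction parallels Lemma~\ref{4p first} exactly: put $\ell=1$ at a zero-eigenvalue odd index, and $\ell=\frac{p-1}{2}$, $\ell=-\frac{p-1}{2}$ at the even indices carrying eigenvalues $+(p-1)$-type and $-(p-1)$-type values whose weighted difference cancels the contribution.

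The main obstacle is purely the parity bookkeeping under CRT: I must verify that the index $r$ corresponding to $(0,1)\in{\mathbb{Z}}_p\times{\mathbb{Z}}_4$ is odd and that the supporting indices used for cancellation are even, because an error in parity would make $\sum_{r~\mathrm{odd}}\ell_r$ come out to $0$ rather than $\pm 1$ and the argument would collapse. Because the paper fixes a parity-respecting labeling (``$r$ and $a_r$ have the same parity'' in Section~\ref{sec 2}), once the correct residues are identified the parity of each $\ell_r$'s index is determined, and I would confirm $\sum_{r~\mathrm{odd}}\ell_r=1$ before invoking Theorem~\ref{PGFR circulant} to conclude non-existence of PGFR. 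I expect the final witness to be essentially identical in shape to that of Lemma~\ref{4p first}, with the roles of the $0$ and $\pm(p-1)$ eigenvalues supplying the cancellation.
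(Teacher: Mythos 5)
Your overall strategy -- the CRT decomposition ${\mathbb{Z}}_{4p}={\mathbb{Z}}_p\times{\mathbb{Z}}_4$, the eigenvalue formula $\lambda_{(a,b)}=\chi_a(Q_p)\bigl[\chi_b(Q_4)+\overline{\chi_b(Q_4)}\bigr]$, and the plan to defeat condition (ii) of Theorem~\ref{PGFR circulant} with an explicit witness -- is exactly the paper's. But the witness you describe cannot be completed, because you restrict yourself to the rational eigenvalues coming from $a=0$, and these are not enough. Under the parity-respecting labelling, an index $r$ is odd precisely when $b\in\{1,3\}$, and every such index has $\lambda_r=0$ (since $\chi_1(Q_4)+\overline{\chi_1(Q_4)}=\chi_3(Q_4)+\overline{\chi_3(Q_4)}=0$); in particular both $(0,1)$ and $(0,3)$ sit at the \emph{odd} residues $p$ and $3p$, so the ``two zero eigenvalues'' idea gives $\sum_{r\,\mathrm{odd}}\ell_r=0$. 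Meanwhile $\lambda_{(0,0)}=p-1$ lives only at $r=0$, which is outside the sum, and the only nonzero even index with a rational eigenvalue is $r=2p$ with $\lambda_{2p}=1-p$. If the odd support contributes $\sum_{r\,\mathrm{odd}}\ell_r=\pm1$, those terms contribute $\mp(p-1)$ to $\sum_r\ell_r(\lambda_r-\lambda_0)$, and cancelling this with the single available rational even term forces $-2(p-1)\,\ell_{2p}=\pm(p-1)$, i.e.\ $\ell_{2p}=\mp\tfrac12$, which is not an integer. So the ``safest construction'' built from the $0$ and $\pm(p-1)$ eigenvalues collapses.

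The missing idea is that you must use the \emph{irrational} eigenvalues and cancel their $\sqrt{p}$ parts against each other. For $p\equiv1\Mod{4}$ one has $\chi_a(Q_p)=\tfrac12\bigl(-1+\sqrt{p}\,(a/p)\bigr)$, so an even index $r_1\equiv0\Mod{4}$ with $a$ a quadratic residue carries $\lambda_{r_1}=-1+\sqrt{p}$, and an even index $r_2\equiv2\Mod{4}$ with $a$ a quadratic non-residue carries $\lambda_{r_2}=1+\sqrt{p}$. These differ by exactly $2$, so taking $\ell_1=1$ at the odd index $r=1$ (where $\lambda_1=\lambda_{(1,1)}=0$), $\ell_{r_1}=\tfrac{1-p}{2}$ and $\ell_{r_2}=\tfrac{p-1}{2}$ gives
\begin{align*}
\sum_{r}\ell_r(\lambda_r-\lambda_0)=-(p-1)+\tfrac{p-1}{2}\bigl[(1+\sqrt{p})-(-1+\sqrt{p})\bigr]=0,
\end{align*}
with $\sum_{r\,\mathrm{odd}}\ell_r=1$; this is the paper's witness. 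Without introducing this pair of conjugate irrational eigenvalues, the argument does not go through.
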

\begin{proof}
We write ${\mathbb{Z}}_{4p}={\mathbb{Z}}_{p} \times {\mathbb{Z}}_{4}$. Then we have $\lambda_{(a, b)}=\chi_a(Q_p) \left[\chi_b(Q_4)+\overline{\chi_b(Q_4)} \right]$, where $a \in {\mathbb{Z}}_{p}$ and $b \in {\mathbb{Z}}_{4}$. This implies that $\lambda_{0} = \lambda_{(0, 0)}=p-1$ and $\lambda_{1}=\lambda_{(1, 1)}=0$. Define the sets $E_1$ and $E_2$ given by
\begin{align*}
E_1&=\{r \colon r \equiv 0~(\mathrm{mod}~4)~\mathrm{and}~r \neq 0 ~\mathrm{such~that}~\lambda_r=\lambda_{(a,0)}~\mathrm{for~some~quadratic~residue~}a~\mathrm{of}~p\}~\mathrm{and}\\
E_2&=\{r \colon r \equiv 2~(\mathrm{mod}~4)~\mathrm{and}~r \neq 2p ~\mathrm{such~that}~\lambda_r=\lambda_{(a,2)}~\textrm{for~some~quadratic~non-residue~}a~\mathrm{of}~p\}.
\end{align*}

Choose $r_1 \in E_1$ and $r_2 \in E_2$. Then we have $\lambda_{r_1}=-1+\sqrt{p}$ and $\lambda_{r_2}=1+\sqrt{p}$. Define the integers $\ell_1, \ldots, \ell_{n-1}$ such that 
\[{\ell}_r=\left\{ \begin{array}{ll} 1 & \textrm{ if } r=1\\
 \frac{1-p}{2} & \textrm{ if } r=r_1~\\
 \frac{p-1}{2} & \textrm{ if } r=r_2~\\
 0 & \textrm{ otherwise. } \end{array}\right. \]
Then $\sum_{r=1}^{n-1}{\ell_r}(\lambda_r-\lambda_0)=0$ and $\sum_{r~\mathrm{odd}} \ell_r=1$. Therefore, $G_n$ does not admit PGFR.
\end{proof}

\begin{lema}\label{8p first}
Let $n$ be a positive integer such that $n=8p$, where $p$ is a prime with $p \equiv 3~(\mathrm{mod}~4)$. Then the graph $G_n$ does not admit PGFR. 
\end{lema}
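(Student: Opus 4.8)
The plan is to follow the same template used in Lemma~\ref{4p first} and Lemma~\ref{4p second}: exhibit an explicit integer combination of the eigenvalue differences that vanishes while forcing $\sum_{r~\mathrm{odd}}\ell_r = 1$, so that Theorem~\ref{PGFR circulant} immediately rules out PGFR. First I would factor ${\mathbb{Z}}_{8p}={\mathbb{Z}}_{2^3} \times {\mathbb{Z}}_{p}$ and, since $p \equiv 3~(\mathrm{mod}~4)$ means $-1 \notin Q_p$, apply Theorem~\ref{theorem canonical factorization}(ii) with the $2^3$-component in the ``$t=\ell$'' block (because $-1 \in Q_8$ fails as well by Lemma~\ref{lemma 1.2}, so in fact both primes sit in the conjugated part). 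I would write the eigenvalues as
\begin{align*}
\lambda_{(a, b)}=\chi_a(Q_p)\chi_b(Q_{2^3})+\overline{\chi_a(Q_p)\chi_b(Q_{2^3})},
\end{align*}
where $a \in {\mathbb{Z}}_{p}$ and $b \in {\mathbb{Z}}_{2^3}$, and then read off the small integer eigenvalues using the explicit formula for $\chi_b(Q_8)$ in Theorem~\ref{theorem character prime power}(i).

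The key computational step is to locate at least one odd index $r$ whose eigenvalue is an integer (ideally $0$), together with enough even indices carrying integer eigenvalues to balance the relation. Following the $4p$ analysis, I expect to find an odd vertex $r_0$ with $\lambda_{r_0}=0$ (coming from a $b$-value where $\chi_b(Q_8)$ is purely imaginary so that the eigenvalue $\chi_a(Q_p)\chi_b(Q_8)+\overline{\cdots}$ collapses to $0$), and two even vertices $r_1, r_2$ whose eigenvalues are small rational numbers such as $\pm 1$ or $\pm p$. Then I would set $\ell_{r_0}=1$ and choose $\ell_{r_1}, \ell_{r_2}$ (each a rational multiple of $p$ like $\frac{p-1}{2}$ and $\frac{1-p}{2}$, which are integers since $p$ is odd) so that $\sum_r \ell_r(\lambda_r-\lambda_0)=0$ while every nonzero $\ell_r$ except $\ell_{r_0}$ sits on an even index. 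This yields $\sum_{r~\mathrm{odd}}\ell_r=1$, and Theorem~\ref{PGFR circulant} gives the conclusion.

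The main obstacle will be the bookkeeping of the character values for $Q_8$: unlike the $Q_4$ case, the formula for $\chi_b(Q_{2^k})$ with $k=3$ has three regimes (the $\{2^{k-3},3\cdot2^{k-3},\ldots\}$ case, the $\{0,2^{k-2},\ldots\}$ case, and the ``otherwise'' zero case), so I must verify carefully which residues $b \in {\mathbb{Z}}_8$ produce integer versus irrational contributions, and confirm that the chosen $r_0$ genuinely corresponds to an odd vertex under the CRT identification ${\mathbb{Z}}_{8p}\cong{\mathbb{Z}}_8\times{\mathbb{Z}}_p$ (the parity of $r$ is governed by its residue mod $2$, hence by the $2^3$-coordinate). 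Once the correct triple of indices is pinned down and the parity check passes, the vanishing of $\sum_r \ell_r(\lambda_r-\lambda_0)$ is a one-line verification and the odd-sum being $1$ is immediate, so the proof reduces entirely to this character-value identification.
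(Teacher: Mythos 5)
Your overall template is the right one --- exhibit integers $\ell_r$ with $\sum_r \ell_r(\lambda_r-\lambda_0)=0$ and $\sum_{r~\mathrm{odd}}\ell_r=1$, then invoke Theorem~\ref{PGFR circulant} --- and your eigenvalue formula $\lambda_{(a,b)}=\chi_a(Q_p)\chi_b(Q_8)+\overline{\chi_a(Q_p)\chi_b(Q_8)}$ agrees with the paper's. But the concrete combination you propose cannot be realized, and this is exactly where the $8p$ case departs from the $4p$ case. Under the identification ${\mathbb{Z}}_{8p}\cong{\mathbb{Z}}_p\times{\mathbb{Z}}_8$ an index $r$ is odd precisely when its ${\mathbb{Z}}_8$-coordinate $b$ is odd, and for odd $b$ Theorem~\ref{theorem character prime power}(i) gives $\chi_b(Q_8)=\exp(-b\pi\mathbf{i}/4)=(\pm1\pm\mathbf{i})/\sqrt{2}$, which is never purely imaginary. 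Combining this with $\chi_a(Q_p)=\frac{p-1}{2}$ (for $a=0$) or $\frac{-1-\mathbf{i}(a/p)\sqrt{p}}{2}$ (for $a\neq 0$, using $p\equiv 3\Mod{4}$), every odd-index eigenvalue comes out as $\frac{\pm(p-1)}{\sqrt 2}$ or $\frac{\pm 1\pm\sqrt p}{\sqrt 2}$: all irrational, none zero. So there is no odd $r_0$ with $\lambda_{r_0}=0$, and, more importantly, a relation with a single odd index carrying $\ell_{r_0}=1$ and all other support on even indices is impossible, because the even-index eigenvalues (which lie in $\{p-1,\,0,\,\pm 1,\,\pm\sqrt p\}$) contain no $1/\sqrt 2$ component to cancel against $\lambda_{r_0}$.

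The fix, which is what the paper does, is to spread the odd support over three indices whose $1/\sqrt 2$-parts cancel among themselves while the coefficients still sum to $1$: take $r=3p$ with $\lambda_{3p}=\frac{1-p}{\sqrt 2}$ and coefficient $1$, an odd $r_3$ with $\lambda_{r_3}=\frac{-1+\sqrt p}{\sqrt 2}$ and coefficient $\frac{1-p}{2}$, and an odd $r_4$ with $\lambda_{r_4}=\frac{1+\sqrt p}{\sqrt 2}$ and coefficient $\frac{p-1}{2}$; both the $\frac{1}{\sqrt 2}$ and the $\frac{\sqrt p}{\sqrt 2}$ contributions vanish and the odd coefficient sum is $1+\frac{1-p}{2}+\frac{p-1}{2}=1$. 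The remaining rational defect is then absorbed by two even indices with $\lambda_{r_1}=-1$ and $\lambda_{r_2}=1$ carrying coefficients $\frac{1-p}{2}$ and $\frac{p-1}{2}$. Without this multi-index cancellation idea your argument stalls at the point where you go looking for an odd index with rational eigenvalue, so as written the proposal has a genuine gap.
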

\begin{proof}
We write ${\mathbb{Z}}_{8p}={\mathbb{Z}}_{p} \times {\mathbb{Z}}_{8}$. Then we have $\lambda_{(a, b)} = \chi_a(Q_p)\chi_b(Q_8) + \overline{\chi_a(Q_p) \chi_b(Q_8)}$, where $a \in {\mathbb{Z}}_{p}$ and $b \in {\mathbb{Z}}_{8}$. This implies that $\lambda_{0} = \lambda_{(0, 0)}=p-1$ and $\lambda_{3p}=\lambda_{(0, 3)}=\frac{1-p}{\sqrt{2}}$. Define the sets $E_1, E_2, O_1$ and $O_2$ given by
\begin{align*}
E_1&=\{r \colon r \equiv 0~(\mathrm{mod}~8)~\mathrm{and}~r \neq 0 ~\mathrm{such~that}~\lambda_r=\lambda_{(a,0)}~\mathrm{for~some~quadratic~residue~}a~\mathrm{of}~p\},\\
E_2&=\{r \colon r \equiv 4~(\mathrm{mod}~8)~\mathrm{such~that}~\lambda_r=\lambda_{(a,4)}~\textrm{for~some~quadratic~residue~}a~\mathrm{of}~p\},\\
O_1&=\{r \colon r~\mathrm{is~odd~such~that}~\lambda_r=\lambda_{(a,1)}~\textrm{for~some~quadratic~non-residue~}a~\mathrm{of}~p\} \setminus \{p, 3p, 5p, 7p\}~\mathrm{and}\\
O_2&=\{r \colon r~\mathrm{is~odd~such~that}~\lambda_r=\lambda_{(a,3)}~\textrm{for~some~quadratic~non-residue~}a~\mathrm{of}~p\} \setminus \{p, 3p, 5p, 7p\}.
\end{align*}
Choose $r_1 \in E_1, r_2 \in E_2, r_3 \in O_1$ and $r_4 \in O_2$. Then we have 
$$\lambda_{r_1}=- 1,~~\lambda_{r_2}=1,~~\lambda_{r_3}=\frac{-1+\sqrt{p}}{\sqrt{2}}~~\mathrm{and}~~\lambda_{r_4}=\frac{1+\sqrt{p}}{\sqrt{2}}.$$
Define the integers $\ell_1, \ldots, \ell_{n-1}$ such that 
\[{\ell}_r=\left\{ \begin{array}{ll} \frac{1-p}{2}  & \textrm{ if } r=r_1\\
 \frac{p-1}{2} & \textrm{ if } r=r_2~\\
 1 & \textrm{ if } r=3p~\\
  \frac{1-p}{2} & \textrm{ if } r=r_3~\\
 \frac{p-1}{2} & \textrm{ if } r=r_4~\\
 0 & \textrm{ otherwise. } \end{array}\right. \]
Then $\sum_{r=1}^{n-1}{\ell_r}(\lambda_r-\lambda_0)=0$ and $\sum_{r~\mathrm{odd}} \ell_r=1$. Therefore, $G_n$ does not admit PGFR.
\end{proof}

\begin{lema}\label{8p second}
Let $n$ be a positive integer such that $n=8p$, where $p$ is a prime with $p \equiv 1~(\mathrm{mod}~4)$. Then the graph $G_n$ does not admit PGFR. 
\end{lema}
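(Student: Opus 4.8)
The plan is to follow the same template used in Lemmas~\ref{8p first}, \ref{4p second} and \ref{4p first}: exhibit an explicit integer vector $(\ell_1, \ldots, \ell_{n-1})$ that satisfies the linear relation $\sum_{r=1}^{n-1}\ell_r(\lambda_r - \lambda_0) = 0$ while forcing $\sum_{r~\mathrm{odd}}\ell_r = 1$, and then invoke Theorem~\ref{PGFR circulant} to conclude that $G_n$ fails to admit PGFR. Since here $p \equiv 1~(\mathrm{mod}~4)$, we have $-1 \in Q_p$, so the eigenvalue formula splits multiplicatively as in Lemma~\ref{2p second} and Lemma~\ref{4p second}, namely $\lambda_{(a,b)} = \chi_a(Q_p)\left[\chi_b(Q_8) + \overline{\chi_b(Q_8)}\right]$ after writing ${\mathbb{Z}}_{8p} = {\mathbb{Z}}_p \times {\mathbb{Z}}_8$. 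This is the key structural difference from Lemma~\ref{8p first}, where $p \equiv 3~(\mathrm{mod}~4)$ produced the coupled sum-plus-conjugate form.

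First I would compute the relevant eigenvalues explicitly using Theorem~\ref{theorem character prime power}. For the ${\mathbb{Z}}_8$ factor, the quantity $\chi_b(Q_8) + \overline{\chi_b(Q_8)}$ is real and takes the values $-2$ (at $b=1,3,5,7$, since $\chi_b(Q_8) = 2^{0}\exp(-b\pi\mathbf{i}/4)$ contributes $\sqrt{2}\cos(b\pi/4) = \pm\sqrt{2}$ — I would verify the exact constants), together with the values from $b \in \{0,2,4,6\}$. For the ${\mathbb{Z}}_p$ factor, $\chi_a(Q_p)$ equals $\frac{p-1}{2}$ at $a=0$ and $\frac{1}{2}(-1 \pm \sqrt{p})$ according to whether $a$ is a quadratic residue or non-residue. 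Crucially, I would locate an index $r_0$ that is \emph{odd} and at which $\lambda_{r_0} = 0$; the natural candidate is to take $a$ with $\chi_a(Q_p)$ rational and a value of $b \in \{1,3,5,7\}$, or else to use that $\chi_b(Q_8)+\overline{\chi_b(Q_8)} = 0$ for some odd $b$, giving $\lambda_{r_0} = 0$ at an odd $r_0$ directly. Then I would build the witness by combining that single odd index carrying coefficient $1$ with a rational-integer cancellation among even indices, exactly as the $\frac{p-1}{2}$ and $\frac{1-p}{2}$ pairing in the earlier lemmas balances the $\sqrt{p}$ and constant parts to zero.

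The main obstacle, and the only genuinely new bookkeeping, is verifying that the chosen index at which $\lambda_r = 0$ is odd (so that it contributes to $\sum_{r~\mathrm{odd}}\ell_r$) and that the remaining balancing coefficients sit on \emph{even} indices (so they do not disturb the odd sum). Concretely I expect to place $\ell_{r_0} = 1$ at an odd index with $\lambda_{r_0} = 0 = \lambda_0 - \lambda_0$ — wait, more precisely $\lambda_{r_0} - \lambda_0 = -(p-1)$, so a clean zero-eigenvalue index alone will not satisfy the relation; instead I would pair it with two even indices $r_1, r_2$ carrying coefficients $\frac{1-p}{2}$ and $\frac{p-1}{2}$ whose $\sqrt{p}$-terms cancel and whose constant and $(p-1)$ contributions exactly offset, precisely mirroring Lemma~\ref{4p second}. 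The whole computation reduces to the identity $\left(\frac{1-p}{2}\right)\lambda_{r_1} + \left(\frac{p-1}{2}\right)\lambda_{r_2} + \lambda_{r_0} - \left(1 + \frac{1-p}{2} + \frac{p-1}{2}\right)\lambda_0 = 0$, which holds once the values of $\lambda_{r_1}, \lambda_{r_2}$ are the two residue/non-residue eigenvalues differing only in the sign of $\sqrt{p}$.

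Since the structure is entirely parallel to Lemma~\ref{4p second} (with the ${\mathbb{Z}}_4$ factor replaced by ${\mathbb{Z}}_8$ and one extra odd index supplying the required $\pm 1$), I would define the sets $E_1, E_2$ of even residue/non-residue indices and an odd set $O$ as in the preceding proofs, choose representatives $r_1 \in E_1$, $r_2 \in E_2$ and an odd $r_0$ with $\lambda_{r_0} = 0$, set the coefficient vector accordingly, and then check directly that $\sum_{r=1}^{n-1}\ell_r(\lambda_r - \lambda_0) = 0$ while $\sum_{r~\mathrm{odd}}\ell_r = 1$. Theorem~\ref{PGFR circulant} then immediately yields that $G_n$ does not admit PGFR. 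I anticipate that the authors simply remark that the proof is similar to that of Lemma~\ref{8p first} or Lemma~\ref{4p second} and omit the details.
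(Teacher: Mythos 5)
Your overall strategy (exhibit integers $\ell_r$ with $\sum_{r}\ell_r(\lambda_r-\lambda_0)=0$ and $\sum_{r\ \mathrm{odd}}\ell_r=1$, then invoke Theorem~\ref{PGFR circulant}) is the same as the paper's, and your decomposition $\lambda_{(a,b)}=\chi_a(Q_p)\left[\chi_b(Q_8)+\overline{\chi_b(Q_8)}\right]$ over ${\mathbb{Z}}_p\times{\mathbb{Z}}_8$ is correct. However, the witness you propose cannot be built, because its key ingredient --- an \emph{odd} index $r_0$ with $\lambda_{r_0}=0$ --- does not exist for $n=8p$. An index $r$ is odd exactly when its ${\mathbb{Z}}_8$-coordinate $b$ is odd, and by Theorem~\ref{theorem character prime power} one has $\chi_b(Q_8)+\overline{\chi_b(Q_8)}=2\cos(b\pi/4)=\pm\sqrt{2}$ for $b\in\{1,3,5,7\}$ (not $-2$, and in particular never $0$; the value $0$ occurs only at $b=2$ and $b=6$, i.e.\ at \emph{even} indices). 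Since $\chi_a(Q_p)\in\{\tfrac{p-1}{2},\tfrac{-1\pm\sqrt{p}}{2}\}$ is also never zero, every odd index satisfies $\lambda_r\in\{\pm\tfrac{p-1}{\sqrt{2}},\tfrac{\pm 1\pm\sqrt{p}}{\sqrt{2}}\}$. Your fallback (``use that $\chi_b(Q_8)+\overline{\chi_b(Q_8)}=0$ for some odd $b$'') therefore also fails, and your three-term identity $\tfrac{1-p}{2}\lambda_{r_1}+\tfrac{p-1}{2}\lambda_{r_2}+\lambda_{r_0}=p-1$ has no admissible odd $r_0$.

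The repair is to accept a nonzero irrational eigenvalue at the distinguished odd index and cancel it using \emph{additional odd} indices whose coefficients sum to zero, so that $\sum_{r\ \mathrm{odd}}\ell_r$ is undisturbed. This is what the paper does: it takes $\ell_{3p}=1$ with $\lambda_{3p}=\lambda_{(0,3)}=\tfrac{1-p}{\sqrt{2}}$, keeps two even indices $r_1,r_2$ with $\lambda_{r_1}=-1+\sqrt{p}$, $\lambda_{r_2}=1+\sqrt{p}$ and coefficients $\tfrac{1-p}{2},\tfrac{p-1}{2}$ (producing the rational $p-1=\lambda_0$ exactly as you envisioned), and then adds two further odd indices $r_3,r_4$ with $\lambda_{r_3}=\tfrac{-1+\sqrt{p}}{\sqrt{2}}$, $\lambda_{r_4}=\tfrac{1+\sqrt{p}}{\sqrt{2}}$ and coefficients $\tfrac{1-p}{2},\tfrac{p-1}{2}$; these contribute $\tfrac{p-1}{\sqrt{2}}$, cancelling $\lambda_{3p}$, while their coefficients sum to $0$, so $\sum_{r\ \mathrm{odd}}\ell_r=1$ still holds. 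The needed relation is thus a five-term one that balances the $\sqrt{p}$- and $\sqrt{2}$-parts simultaneously, not the three-term relation of Lemma~\ref{4p second}, and this extra step is the content your proposal is missing.
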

\begin{proof}
We write ${\mathbb{Z}}_{8p}={\mathbb{Z}}_{p} \times {\mathbb{Z}}_{8}$. Then we have $\lambda_{(a, b)}=\chi_a(Q_p) \left[\chi_b(Q_8)+\overline{\chi_b(Q_8)} \right]$, where $a \in {\mathbb{Z}}_{p}$ and $b \in {\mathbb{Z}}_{8}$. This implies that $\lambda_{0}=\lambda_{(0, 0)}=p-1$ and $\lambda_{3p}=\lambda_{(0, 3)}=\frac{1-p}{\sqrt{2}}$. Define the sets $E_1, E_2, O_1$ and $O_2$ given by
\begin{align*}
E_1&=\{r \colon r \equiv 0~(\mathrm{mod}~8)~\mathrm{and}~r \neq 0 ~\mathrm{such~that}~\lambda_r=\lambda_{(a,0)}~\mathrm{for~some~quadratic~residue~}a~\mathrm{of}~p\},\\
E_2&=\{r \colon r \equiv 4~(\mathrm{mod}~8)~\mathrm{such~that}~\lambda_r=\lambda_{(a,4)}~\textrm{for~some~quadratic~non-residue~}a~\mathrm{of}~p\},\\
O_1&=\{r \colon r~\mathrm{is~odd~such~that}~\lambda_r=\lambda_{(a,1)}~\mathrm{for~some~quadratic~residue~}a~\mathrm{of}~p\} \setminus \{p, 3p, 5p, 7p\}~\mathrm{and}\\
O_2&=\{r \colon r~\mathrm{is~odd~such~that}~\lambda_r=\lambda_{(a,3)}~\textrm{for~some~quadratic~non-residue~}a~\mathrm{of}~p\} \setminus \{p, 3p, 5p, 7p\}.
\end{align*}
Choose $r_1 \in E_1, r_2 \in E_2, r_3 \in O_1$ and $r_4 \in O_2$. Then we have 
$$\lambda_{r_1}=-1+\sqrt{p},~~\lambda_{r_2}=1+\sqrt{p},~~\lambda_{r_3} = \frac{-1+\sqrt{p}}{\sqrt{2}}~~\mathrm{and}~~\lambda_{r_4} = \frac{1+\sqrt{p}}{\sqrt{2}}.$$
Define the integers $\ell_1, \ldots, \ell_{n-1}$ such that 
\[{\ell}_r=\left\{ \begin{array}{ll} \frac{1-p}{2}  & \textrm{ if } r=r_1\\
 \frac{p-1}{2} & \textrm{ if } r=r_2~\\
 1 & \textrm{ if } r = 3p~\\
  \frac{1-p}{2} & \textrm{ if } r=r_3~\\
 \frac{p-1}{2} & \textrm{ if } r=r_4~\\
 0 & \textrm{ otherwise. } \end{array}\right. \]
Then $\sum_{r=1}^{n-1}{\ell_r}(\lambda_r-\lambda_0)=0$ and $\sum_{r~\mathrm{odd}} \ell_r=1$. Therefore, $G_n$ does not admit PGFR.
\end{proof}

\begin{lema}\label{h > 2}
Let $h$ be an integer with $h > 2$ and $n=2^h$. Then $G_n$ admits PGFR if and only if $h=3$.
\end{lema}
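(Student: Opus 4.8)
The plan is to handle the two directions separately, but in both cases to work directly from the spectrum of $G_{2^h}$, which I would compute first. Since $-1 \notin Q_{2^h}$ for $h>1$ by Lemma~\ref{lemma 1.2}, part (ii) of Theorem~\ref{theorem canonical factorization} gives $\lambda_a = \chi_a(Q_{2^h}) + \overline{\chi_a(Q_{2^h})}$ for $a \in \mathbb{Z}_{2^h}$, and Theorem~\ref{theorem character prime power}(i) tells me that $\chi_a(Q_{2^h})$ is nonzero only for $a$ in $\{2^{h-3}, 3\cdot 2^{h-3}, 5\cdot 2^{h-3}, 7\cdot 2^{h-3}\} \cup \{0, 2^{h-2}, 2^{h-1}, 3\cdot 2^{h-2}\}$. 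Evaluating the two formulas yields $\lambda_0 = 2^{h-2}$, $\lambda_{2^{h-1}} = -2^{h-2}$, $\lambda_{2^{h-2}} = \lambda_{3\cdot 2^{h-2}} = 0$, $\lambda_{2^{h-3}} = \lambda_{7\cdot 2^{h-3}} = 2^{h-3}\sqrt{2}$, $\lambda_{3\cdot 2^{h-3}} = \lambda_{5\cdot 2^{h-3}} = -2^{h-3}\sqrt{2}$, and $\lambda_a = 0$ otherwise. The structural fact I would isolate from this is the parity of the special residues: when $h=3$ the residues $2^{h-3}, 3\cdot 2^{h-3}, 5\cdot 2^{h-3}, 7\cdot 2^{h-3}$ equal $1,3,5,7$ and are odd, whereas when $h \geq 4$ every residue carrying a nonzero character sum is even, since $2^{h-3} \geq 2$.

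For the forward implication ($h=3$) I would identify $G_8$ with $C_8$: indeed $Q_8 = \{1\}$, so $T_8 = \{1,7\}$ and $G_8 = \mathrm{Cay}(\mathbb{Z}_8, \{1,7\}) = C_8$. Theorem~\ref{Pal circulant cycle} then shows that $C_8$ admits PGST (here $8 = 2^3$ with $3>1$), and since PGST is the special case $\alpha = 0$ of PGFR, $G_8$ admits PGFR. Alternatively, one can verify condition (ii) of Theorem~\ref{PGFR circulant} directly: writing the defining relation over the $\mathbb{Q}$-basis $\{1, \sqrt{2}\}$, the $\sqrt{2}$-component forces $\ell_1 + \ell_7 = \ell_3 + \ell_5$, so $\sum_{r~\mathrm{odd}} \ell_r = 2(\ell_3 + \ell_5)$ is always even and hence never equal to $\pm 1$.

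For the reverse implication I must show that $G_{2^h}$ fails PGFR for every $h \geq 4$, and here the parity observation does all the work, just as in Lemmas~\ref{unitary Cayley graph 5} and~\ref{unitary Cayley graph 6}. Because $2^{h-3} \geq 2$, the residue $1$ is not among the special residues, so $\lambda_1 = 0$; at the same time $2^{h-2}$ is even and $\lambda_{2^{h-2}} = 0$. Taking $\ell_1 = 1$, $\ell_{2^{h-2}} = -1$, and all other $\ell_r = 0$, the $\lambda_0$ terms cancel and $\sum_{r=1}^{n-1} \ell_r(\lambda_r - \lambda_0) = \lambda_1 - \lambda_{2^{h-2}} = 0$, while $\sum_{r~\mathrm{odd}} \ell_r = \ell_1 = 1$. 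This violates condition (ii) of Theorem~\ref{PGFR circulant}, which is independent of the chosen pair of vertices, so $G_{2^h}$ admits no PGFR. I expect the only genuine work to lie in the spectral computation of the first step; once the simultaneous vanishing of $\lambda_1$ and $\lambda_{2^{h-2}}$ for $h \geq 4$ is in hand, both directions follow immediately, so there is no conceptual obstacle beyond applying the character-sum formula correctly.
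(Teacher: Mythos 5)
Your argument is correct and essentially identical to the paper's: for $h=3$ both identify $G_8$ with $C_8$ and invoke the PGST result for cycles, and for $h\geq 4$ both exhibit one odd-indexed and one even-indexed vanishing eigenvalue (you use $\lambda_1=\lambda_{2^{h-2}}=0$, the paper uses $\lambda_3=\lambda_{2^{h-2}}=0$) to violate condition (ii) of the PGFR criterion with $\ell$-values $1$ and $-1$. The explicit spectrum computation and the alternative $\sqrt{2}$-component verification for $h=3$ are correct but add nothing beyond the paper's route.
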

\begin{proof}
For $h=3$, we obtain that $G_8=C_8$. Since $C_8$ admits PGST, it admits PGFR as well. Now we assume that $h > 3$. From the proof of Lemma~\ref{QUCG PGST 2}, recall that $\lambda_3=0=\lambda_{2^{h-2}}$. Consider the integers ${\ell}_1, \ldots, {\ell}_{n-1}$ such that 
\[{\ell}_r=\left\{ \begin{array}{rl} 1 & \textrm{ if } r=3\\
 -1 & \textrm{ if } r=2^{h-2}~\\
 0 & \textrm{ otherwise. } \end{array}\right. \]
Then $\sum_{r=1}^{n-1}{\ell_r}(\lambda_r-\lambda_0)=0$ and $\sum_{r~\mathrm{odd}} \ell_r=1$. Therefore, $G_n$ does not admit PGFR. This completes the proof.
\end{proof}

\begin{lema}
Let $h$ be a positive integer such that $h > 3$ and $p$ be a prime with $p \equiv 3~(\mathrm{mod}~4)$. If $n=2^h p$, then $G_n$ does not admit PGFR. 
\end{lema}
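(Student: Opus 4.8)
The plan is to apply the PGFR criterion of Theorem~\ref{PGFR circulant} in its contrapositive form: it suffices to produce integers $\ell_1, \ldots, \ell_{n-1}$ with $\sum_{r=1}^{n-1}\ell_r(\lambda_r - \lambda_0) = 0$ yet $\sum_{r\ \mathrm{odd}}\ell_r = \pm 1$. Mirroring the mechanism of Lemma~\ref{h > 2}, I would build such a relation from two vanishing eigenvalues, one carried by an odd index and one by an even index, so that the two differences $\lambda_r - \lambda_0$ coincide and cancel under the coefficients $+1$ and $-1$.

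To set up the spectrum, write ${\mathbb{Z}}_n = {\mathbb{Z}}_{2^h}\times{\mathbb{Z}}_p$. As $h > 1$ we have $-1\notin Q_{2^h}$ by Lemma~\ref{lemma 1.2}, and $-1\notin Q_p$ since $p\equiv 3\ (\mathrm{mod}\ 4)$, so Theorem~\ref{theorem canonical factorization}(ii) gives $\lambda_{(a_1, a_2)} = \chi_{a_1}(Q_{2^h})\chi_{a_2}(Q_p) + \overline{\chi_{a_1}(Q_{2^h})\chi_{a_2}(Q_p)}$. The decisive point --- the one that separates $h > 3$ from the $h = 3$ situation treated earlier --- is that by Theorem~\ref{theorem character prime power}(i) the sum $\chi_{a_1}(Q_{2^h})$ is supported on multiples of $2^{h-3}$, and for $h > 3$ every such multiple is even. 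Hence $\chi_{a_1}(Q_{2^h}) = 0$ for all odd $a_1$, and since the parity of an index agrees with the parity of its first CRT coordinate, this forces $\lambda_r = 0$ for every odd $r$; in particular an odd index with zero eigenvalue certainly exists. For an even one I would take the pair $(2^{h-2}, 0)$: here $\chi_{2^{h-2}}(Q_{2^h}) = -2^{h-3}\mathbf{i}$ is purely imaginary while $\chi_0(Q_p) = \tfrac{p-1}{2}$ is real, so their product is purely imaginary and the associated eigenvalue vanishes, the index being even because $2^{h-2}$ is.

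With an odd index $r_1$ and an even index $r_2$ both carrying eigenvalue $0$, setting $\ell_{r_1} = 1$, $\ell_{r_2} = -1$ and all remaining $\ell_r = 0$ yields $\sum_r \ell_r(\lambda_r - \lambda_0) = (\lambda_{r_1} - \lambda_0) - (\lambda_{r_2} - \lambda_0) = 0$ together with $\sum_{r\ \mathrm{odd}}\ell_r = 1$, whence Theorem~\ref{PGFR circulant} rules out PGFR. There is essentially no hard computation here: the whole weight of the argument rests on the structural fact that the $2$-adic character sum $\chi_{\cdot}(Q_{2^h})$ lives on even residues as soon as $h > 3$, collapsing the entire odd part of the spectrum to $0$ and making the two-term cancellation available. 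The only residual care is bookkeeping --- verifying under the labeling convention of Section~\ref{sec 2} that the chosen CRT pairs really do sit at indices of the claimed parity, and that $r_1 \neq r_2$, which is clear since one is odd and the other even.
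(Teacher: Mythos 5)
Your proposal is correct and takes essentially the same route as the paper: the paper also works in ${\mathbb{Z}}_{2^h}\times{\mathbb{Z}}_p$, exhibits the odd-index zero eigenvalue $\lambda_{(1,1)}=0$ and the even-index zero eigenvalue $\lambda_{(2^{h-2},0)}=0$, and uses the coefficients $+1,-1$ to violate condition (ii) of Theorem~\ref{PGFR circulant}. Your remark that \emph{every} odd index carries eigenvalue $0$ when $h>3$ is a mild strengthening of the paper's single choice $r=1$, but the cancellation mechanism is identical.
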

\begin{proof}
We write ${\mathbb{Z}}_{2^h p}={\mathbb{Z}}_{2^h} \times {\mathbb{Z}}_{p}$. Then we have $\lambda_{(a, b)}=\chi_a(Q_{2^h})\chi_b(Q_p)+\overline{\chi_a(Q_{2^h})\chi_b(Q_p)}$, where $a \in {\mathbb{Z}}_{2^h}$ and $b \in {\mathbb{Z}}_{p}$. This implies that $\lambda_{1}=\lambda_{(1, 1)}=0$ and $\lambda_{2^{h-2}p}=\lambda_{(2^{h-2}, 0)}=0$. Define the integers $\ell_1, \ldots, \ell_{n-1}$ such that 
\[{\ell}_r=\left\{ \begin{array}{rl} 1 & \textrm{ if } r=1\\
 -1 & \textrm{ if } r=2^{h-2}p~\\
 0 & \textrm{ otherwise. } \end{array}\right. \]
Then $\sum_{r=1}^{n-1}{\ell_r}(\lambda_r-\lambda_0)=0$ and $\sum_{r~\mathrm{odd}} \ell_r=1$. Therefore, $G_n$ does not admit PGFR.
\end{proof}

\begin{lema}
Let $h$ be an integer such that $h > 3$ and $p$ be a prime with $p \equiv 1~(\mathrm{mod}~4)$. If $n=2^h p$, then $G_n$ does not admit PGFR. 
\end{lema}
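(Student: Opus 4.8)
The plan is to produce, just as in the preceding non-existence lemmas, an explicit family of integers $\ell_1,\dots,\ell_{n-1}$ satisfying $\sum_{r=1}^{n-1}\ell_r(\lambda_r-\lambda_0)=0$ yet with $\sum_{r~\mathrm{odd}}\ell_r=1$, thereby violating condition (ii) of Theorem~\ref{PGFR circulant}. First I would write ${\mathbb{Z}}_n={\mathbb{Z}}_p\times{\mathbb{Z}}_{2^h}$ via the Chinese Remainder Theorem. Because $p\equiv 1~(\mathrm{mod}~4)$ gives $-1\in Q_p$ while Lemma~\ref{lemma 1.2} gives $-1\notin Q_{2^h}$, Theorem~\ref{theorem canonical factorization}(ii) applies with the $p$-part leading, so that
$$\lambda_{(a,b)}=\chi_a(Q_p)\left[\chi_b(Q_{2^h})+\overline{\chi_b(Q_{2^h})}\right],\qquad a\in{\mathbb{Z}}_p,~b\in{\mathbb{Z}}_{2^h}.$$

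The crucial point is that the bracketed factor vanishes for the two indices I intend to use. By Theorem~\ref{theorem character prime power}(i), $\chi_b(Q_{2^h})$ is supported on $\{2^{h-3},3\cdot 2^{h-3},5\cdot 2^{h-3},7\cdot 2^{h-3}\}\cup\{0,2^{h-2},2^{h-1},3\cdot 2^{h-2}\}$; since $h>3$ forces $2^{h-3}\geq 2$, every element of this support is even, so $\chi_b(Q_{2^h})=0$ for all odd $b$. Moreover $\chi_{2^{h-2}}(Q_{2^h})=2^{h-3}(-\mathbf{i})$ is purely imaginary, so the bracketed factor also vanishes at $b=2^{h-2}$. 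Using $p\equiv 1~(\mathrm{mod}~4)$ to obtain $2^{h-2}p\equiv 2^{h-2}~(\mathrm{mod}~2^h)$, I then read off $\lambda_1=\lambda_{(1,1)}=0$ and $\lambda_{2^{h-2}p}=\lambda_{(0,2^{h-2})}=0$.

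With these two vanishing eigenvalues in hand --- the index $1$ being odd and the index $2^{h-2}p$ being even --- I would set $\ell_1=1$, $\ell_{2^{h-2}p}=-1$ and $\ell_r=0$ otherwise. Then $\sum_{r=1}^{n-1}\ell_r(\lambda_r-\lambda_0)=(\lambda_1-\lambda_0)-(\lambda_{2^{h-2}p}-\lambda_0)=0$, while $\sum_{r~\mathrm{odd}}\ell_r=\ell_1=1$. Theorem~\ref{PGFR circulant} then yields that $G_n$ does not admit PGFR.

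I expect the only delicate step to be the character-sum bookkeeping, where the hypothesis $h>3$ must be used in an essential way: it is precisely what makes $2^{h-3}$ even and hence keeps every odd residue out of the support of $\chi_\bullet(Q_{2^h})$. This is exactly the feature that fails when $h=3$ (the case $n=8p$ of Lemma~\ref{8p second}, where $2^{h-3}=1$ is odd), which is why that borderline case demanded the longer argument through the sets $E_1,E_2,O_1,O_2$. The remaining ingredients --- the CRT image $2^{h-2}p\leftrightarrow(0,2^{h-2})$ and the vanishing of the bracketed factor --- are routine substitutions into Theorem~\ref{theorem character prime power}, so I do not anticipate any genuine obstacle beyond careful evaluation of those formulas.
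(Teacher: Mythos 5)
Your proposal is correct and takes essentially the same route as the paper: the same CRT decomposition $\mathbb{Z}_n=\mathbb{Z}_p\times\mathbb{Z}_{2^h}$, the same eigenvalue formula $\lambda_{(a,b)}=\chi_a(Q_p)\bigl[\chi_b(Q_{2^h})+\overline{\chi_b(Q_{2^h})}\bigr]$, and the same strategy of exhibiting one odd and one even index with vanishing eigenvalue to violate condition (ii) of Theorem~\ref{PGFR circulant} (the paper uses the even index $2^{h-2}$ where you use $2^{h-2}p$, an immaterial difference). Your explicit verification that the character-sum support forces the vanishing, and your remark on why $h>3$ is essential, are accurate and in fact slightly more detailed than the paper's own write-up.
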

\begin{proof}
We write ${\mathbb{Z}}_{2^h p}={\mathbb{Z}}_{p} \times {\mathbb{Z}}_{2^h}$. Then we have $\lambda_{(a, b)}=\chi_a(Q_{p}) \left[\chi_b(Q_{2^h})+\overline{\chi_b(Q_{2^h})} \right]$, where $a \in {\mathbb{Z}}_{p}$ and $b \in {\mathbb{Z}}_{2^h}$. This implies that $\lambda_{1}=\lambda_{(1, 1)}=0$ and $\lambda_{2^{h-2}}=\lambda_{(1, 2^{h-2})}=0$. Define the integers $\ell_1, \ldots, \ell_{n-1}$ such that 
\[{\ell}_r=\left\{ \begin{array}{rl} 1 & \textrm{ if } r=1\\
 -1 & \textrm{ if } r=2^{h-2}~\\
 0 & \textrm{ otherwise. } \end{array}\right. \]
Then $\sum_{r=1}^{n-1}{\ell_r}(\lambda_r-\lambda_0)=0$ and $\sum_{r~\mathrm{odd}} \ell_r=1$. Therefore, $G_n$ does not admit PGFR.
\end{proof}

\begin{lema}\label{2^h p^s}
Let $h,s$ be positive integers such that $s>1$ and $p$ be a prime such that $p \equiv 3~(\mathrm{mod}~4)$. If $n=2^h p^s$, then $G_n$ does not admit PGFR. 
\end{lema}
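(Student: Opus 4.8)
The plan is to apply Theorem~\ref{PGFR circulant} in its contrapositive form, exactly as in the preceding non-existence lemmas such as Lemma~\ref{h > 2} and the two lemmas treating $n=2^hp$: I will exhibit integers $\ell_1,\dots,\ell_{n-1}$ with $\sum_{r=1}^{n-1}\ell_r(\lambda_r-\lambda_0)=0$ yet $\sum_{r~\mathrm{odd}}\ell_r=1$, which by Theorem~\ref{PGFR circulant} forces $G_n$ to fail PGFR. By Corollary~\ref{corollary for quadratic unitary Cayley graph} the relevant $n$ is already of the form $2^hp^s$, so the whole task reduces to finding one odd index and one even index at which the eigenvalues of $G_n$ agree; placing the weights $+1$ and $-1$ on these two indices then finishes the argument.

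First I would write ${\mathbb{Z}}_n={\mathbb{Z}}_{2^h}\times{\mathbb{Z}}_{p^s}$ and read the eigenvalues off Theorem~\ref{theorem canonical factorization}. Since $p\equiv 3~(\mathrm{mod}~4)$, $-1$ is a quadratic non-residue of $p$, so $-1\notin Q_{p^s}$; meanwhile $-1\in Q_2$ while $-1\notin Q_{2^h}$ for $h\ge 2$ by Lemma~\ref{lemma 1.2}. Consequently the eigenvalue formula has the shape $\lambda_{(a,b)}=\chi_a(Q_2)\left[\chi_b(Q_{p^s})+\overline{\chi_b(Q_{p^s})}\right]$ when $h=1$, and $\lambda_{(a,b)}=\chi_a(Q_{2^h})\chi_b(Q_{p^s})+\overline{\chi_a(Q_{2^h})\chi_b(Q_{p^s})}$ when $h\ge 2$. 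The single structural observation I will use is that in either shape the entire eigenvalue vanishes as soon as the factor $\chi_b(Q_{p^s})$ does.

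The decisive number-theoretic input is that $s>1$ makes $\chi_1(Q_{p^s})=0$: by Theorem~\ref{theorem character prime power}(ii) this character sum is nonzero only for $b\in p^{s-1}{\mathbb{Z}}_p$, and $1\notin p^{s-1}{\mathbb{Z}}_p$ precisely because $p^{s-1}\ge p>1$. Hence $\lambda_{(1,1)}=0=\lambda_{(0,1)}$. The value associated with $(1,1)$ is odd and the value associated with $(0,1)$ is even (its parity equals the parity of the ${\mathbb{Z}}_{2^h}$-coordinate), so, via the parity-preserving labelling, these eigenvalues sit at an odd index $r_1$ and an even index $r_2$ respectively, with $\lambda_{r_1}=\lambda_{r_2}=0$. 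Taking $\ell_{r_1}=1$, $\ell_{r_2}=-1$ and all other $\ell_r=0$ gives $\sum_r\ell_r(\lambda_r-\lambda_0)=(\lambda_{r_1}-\lambda_0)-(\lambda_{r_2}-\lambda_0)=0$ together with $\sum_{r~\mathrm{odd}}\ell_r=1$, and Theorem~\ref{PGFR circulant} then rules out PGFR.

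The genuinely delicate point—and the reason the hypothesis $s>1$ is essential rather than cosmetic—is exactly this vanishing of $\chi_1(Q_{p^s})$: for $s=1$ one has $1\in p^{0}{\mathbb{Z}}_p={\mathbb{Z}}_p$, so $\chi_1(Q_p)\neq 0$ and the two indices no longer share an eigenvalue, which is consistent with $G_{2p}$ actually admitting PGFR by Lemma~\ref{2p first}. The remaining work is only bookkeeping: confirming that the eigenvalue formula really collapses to $0$ in both the $h=1$ and $h\ge 2$ regimes and that the chosen values carry the asserted parities. These are routine once the character-sum formula is in hand, so I would present the $h\ge 2$ computation in full and note that the $h=1$ case is identical with the bracketed formula.
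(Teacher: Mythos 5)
Your proposal is correct and follows essentially the same route as the paper: the same CRT decomposition ${\mathbb{Z}}_{2^h}\times{\mathbb{Z}}_{p^s}$, the same case split on $h=1$ versus $h\ge 2$ for the shape of the eigenvalue formula, the same key observation that $s>1$ forces $\chi_1(Q_{p^s})=0$ so that $\lambda_{(1,1)}=\lambda_{(0,1)}=0$ at one odd and one even index, and the same weights $+1,-1$ fed into Theorem~\ref{PGFR circulant}. No gaps.
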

\begin{proof}
We write ${\mathbb{Z}}_{2^h p^s}={\mathbb{Z}}_{2^h} \times {\mathbb{Z}}_{p^s}$. Then we consider the following two cases. 

\noindent \textbf{Case 1.} If $h=1$, then $\lambda_{(a, b)}=\chi_a(Q_{2})\left[\chi_b(Q_{p^s})+\overline{\chi_b(Q_{p^s})} \right]$, where $a \in {\mathbb{Z}}_{2}$ and $b \in {\mathbb{Z}}_{p^s}$. This implies that $\lambda_{1}=\lambda_{(1, 1)}=0$ and $\lambda_{2}=\lambda_{(0, 1)}=0$. Define the integers $\ell_1, \ldots, \ell_{n-1}$ such that 
\[{\ell}_r=\left\{ \begin{array}{rl} 1 & \textrm{ if } r=1\\
 -1 & \textrm{ if } r=2~\\
 0 & \textrm{ otherwise. } \end{array}\right. \]
Then $\sum_{r=1}^{n-1}{\ell_r}(\lambda_r-\lambda_0)=0$ and $\sum_{r~\mathrm{odd}} \ell_r=1$, and therefore $G_n$ does not admit PGFR.

\noindent \textbf{Case 2.} If $h > 1$, then $\lambda_{(a, b)}=\chi_a(Q_{2^h})\chi_b(Q_{p^s}+\overline{\chi_a(Q_{2^h}) \chi_b(Q_{p^s})}$, where $a \in {\mathbb{Z}}_{2^h}$ and $b \in {\mathbb{Z}}_{p^s}$. This implies that $\lambda_{1}=\lambda_{(1, 1)}=0$ and $\lambda_{2^h}=\lambda_{(0, 1)}=0$. Now the rest of the proof is similar to that of the preceding case. 
\end{proof}

The proof of the next lemma is similar to that of Lemma~\ref{2^h p^s}.
\begin{lema}\label{last lemma}
Let $h,s$ be positive integers such that $s>1$ and $p$ be a prime such that $p \equiv 1~(\mathrm{mod}~4)$. If $n=2^h p^s$, then $G_n$ does not admit PGFR. 
\end{lema}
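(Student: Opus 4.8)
The plan is to follow the template of Lemma~\ref{2^h p^s} and invoke Theorem~\ref{PGFR circulant}: I would exhibit integers $\ell_1,\ldots,\ell_{n-1}$ for which $\sum_{r=1}^{n-1}\ell_r(\lambda_r-\lambda_0)=0$ while $\sum_{r~\mathrm{odd}}\ell_r=1$, thereby violating condition (ii) and forcing the non-existence of PGFR. The only structural change from the $p\equiv 3\ (\mathrm{mod}\ 4)$ case is that $p\equiv 1\ (\mathrm{mod}\ 4)$ gives $-1\in Q_{p^s}$, so the $p$-part of each eigenvalue enters through a single character sum rather than through a sum of conjugates. Accordingly I would split into the cases $h=1$ and $h>1$, since $-1\in Q_2$ but $-1\notin Q_{2^h}$ for $h>1$ by Lemma~\ref{lemma 1.2}.

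First I would apply Theorem~\ref{theorem canonical factorization} to read off the eigenvalues. When $h=1$, both factors contain $-1$, so writing $\mathbb{Z}_n\cong\mathbb{Z}_2\times\mathbb{Z}_{p^s}$ gives $\lambda_{(a,b)}=\chi_a(Q_2)\chi_b(Q_{p^s})$; when $h>1$, only $\mathbb{Z}_{p^s}$ contains $-1$, so ordering the factors as $\mathbb{Z}_{p^s}\times\mathbb{Z}_{2^h}$ gives $\lambda_{(a_1,a_2)}=\chi_{a_1}(Q_{p^s})\left[\chi_{a_2}(Q_{2^h})+\overline{\chi_{a_2}(Q_{2^h})}\right]$. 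In both cases the decisive point is that $s>1$ forces $p^{s-1}>1$, so by Theorem~\ref{theorem character prime power}(ii) one has $\chi_b(Q_{p^s})=0$ whenever $b\notin p^{s-1}\mathbb{Z}_p$, and in particular $\chi_1(Q_{p^s})=0$. Taking the $p$-component equal to $1$ therefore annihilates the whole product, so the eigenvalue is $0$ irrespective of the $2^h$-component. Since the parity of an index $r\in\mathbb{Z}_n$ is governed solely by its residue modulo $2$, i.e.\ by the $2^h$-component under the CRT identification, I can choose that component odd to obtain an odd index $r_1$ (namely $r_1=1$) with $\lambda_{r_1}=0$, and even (for instance $0$) to obtain a distinct nonzero even index $r_2$ with $\lambda_{r_2}=0$. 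Setting $\ell_{r_1}=1$, $\ell_{r_2}=-1$ and all other $\ell_r=0$ then yields $\sum_{r=1}^{n-1}\ell_r(\lambda_r-\lambda_0)=0$ together with $\sum_{r~\mathrm{odd}}\ell_r=1$, and Theorem~\ref{PGFR circulant} concludes the argument.

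The routine but most delicate step is the CRT bookkeeping between $r$ and $(a,b)$, specifically confirming that the even index $r_2$ I select is genuinely nonzero and distinct from $r_1$, which is exactly what makes the constructed relation admissible for Theorem~\ref{PGFR circulant}. Beyond this I expect no real obstacle: the vanishing of $\chi_b(Q_{p^s})$ off the subgroup $p^{s-1}\mathbb{Z}_p$ is the same feature driving the $p\equiv 3\ (\mathrm{mod}\ 4)$ proof, and it is insensitive to the factor $\frac{1+\textbf{i}^p}{1+\textbf{i}}$ that distinguishes the two residue classes, so the construction transfers essentially verbatim.
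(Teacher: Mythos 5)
Your proposal is correct and follows essentially the same route as the paper, which proves this lemma by declaring it ``similar to'' Lemma~\ref{2^h p^s}: split on $h=1$ versus $h>1$ according to whether $-1\in Q_{2^h}$, observe that $s>1$ forces $\chi_1(Q_{p^s})=0$ so that both an odd and a nonzero even index carry the eigenvalue $0$, and feed the resulting relation $\ell_{r_1}=1$, $\ell_{r_2}=-1$ into Theorem~\ref{PGFR circulant}. Your spelled-out version, including the correct eigenvalue formulas from Theorem~\ref{theorem canonical factorization} for the $p\equiv 1\Mod{4}$ case, matches the paper's intended argument.
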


Now we summarize the preceding lemmas to conclude the main result of this section.
\begin{theorem}\label{main theorem PGFR QUCG}
The quadratic unitary Cayley graph $G_n$ admits PGFR if and only if $n \in \{2, 8, 2p\}$ for some prime $p$.  
\end{theorem}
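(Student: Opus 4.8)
The plan is to assemble the preceding lemmas into a single case analysis driven by the factorization $n = 2^h p^s$. By Corollary~\ref{corollary for quadratic unitary Cayley graph}, any $n$ admitting PGFR must already have this shape, with $h \in \mathbb{N}$, $s \in \mathbb{N} \cup \{0\}$ and $p$ an odd prime, so every integer not of this form is excluded for free and the remaining work is pure bookkeeping over the pairs $(h, s)$. First I would settle the pure $2$-power case $s = 0$, i.e. $n = 2^h$. The graphs $G_2$ and $G_4$ admit PGFR directly, while Lemma~\ref{h > 2} shows that for $h > 2$ we get PGFR precisely when $h = 3$ (the graph $G_8 = C_8$) and failure for every $h > 3$. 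Hence among the $2$-powers exactly $n \in \{2, 4, 8\}$ survive.

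Next I would handle $s \geq 1$, splitting on $s = 1$ versus $s > 1$. For $s = 1$ and $h = 1$ the graph $G_{2p}$ admits PGFR by Lemma~\ref{2p first} when $p \equiv 3 \pmod{4}$ and by Lemma~\ref{2p second} when $p \equiv 1 \pmod{4}$. All other $s = 1$ cases fail to admit PGFR: the value $h = 2$ is excluded by Lemmas~\ref{4p first} and~\ref{4p second}, the value $h = 3$ by Lemmas~\ref{8p first} and~\ref{8p second}, and every $h > 3$ by the two lemmas treating $n = 2^h p$ (one for each residue class of $p$ modulo $4$). Finally, the case $s > 1$, with any $h \geq 1$, is ruled out by Lemmas~\ref{2^h p^s} and~\ref{last lemma}. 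Collecting the surviving values yields $n \in \{2, 4, 8\} \cup \{2p : p \text{ an odd prime}\}$.

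There is no serious obstacle in this argument, since all the analytic content lives in the individual lemmas; the only point requiring a moment's care is the final repackaging into the stated list $\{2, 8, 2p\}$. Here one notes that $n = 4 = 2 \cdot 2$ is itself of the form $2p$ with the prime $p = 2$, so it is absorbed into the family $\{2p : p \text{ a prime}\}$ once $p$ ranges over all primes rather than only odd ones, whereas $n = 8 = 2^3$ cannot be written as $2p$ with $p$ prime and so must appear separately. This reduces the surviving set to exactly $n \in \{2, 8, 2p\}$ for some prime $p$, which is the claimed classification.
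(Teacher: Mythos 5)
Your proposal is correct and follows essentially the same route as the paper, which simply summarizes the preceding lemmas of Section~\ref{sec 6}; your case analysis over $(h,s)$ in $n=2^hp^s$ is exactly the bookkeeping the authors leave implicit, and your observation that $n=4$ is absorbed as $2p$ with $p=2$ while $n=8$ must be listed separately matches the intended reading of the statement.
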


\section{FR on quadratic unitary Cayley graphs}\label{sec 7}
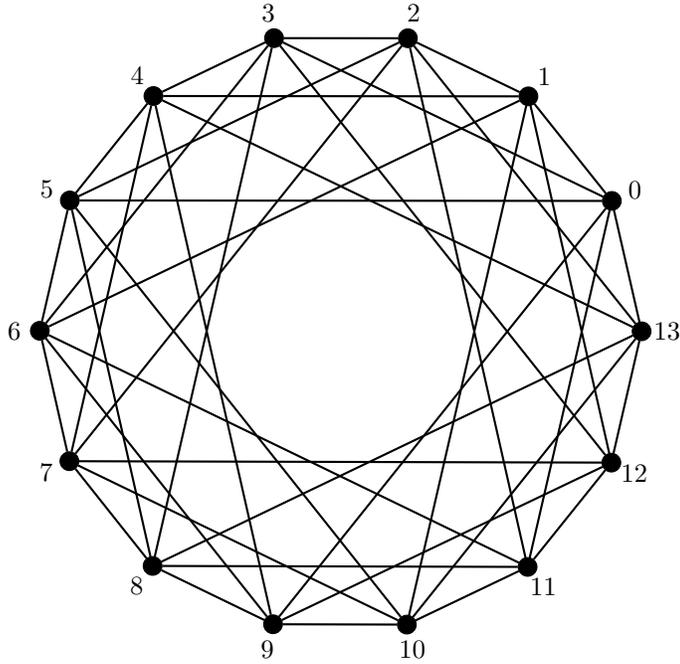
\begin{figure}
\begin{center}
	\begin{tikzpicture}[scale=2,auto,swap]
    \tikzstyle{blackvertex}=[circle,draw=black,fill=black]
    \tikzstyle{bluevertex}=[circle,draw=blue,fill=blue]
    \tikzstyle{greenvertex}=[circle,draw=green,fill=green]
    \tikzstyle{yellowvertex}=[circle,draw=yellow,fill=yellow]
    \tikzstyle{brownvertex}=[circle,draw=brown,fill=brown]
    \tikzstyle{redvertex}=[circle,draw=red,fill=red]
    \tikzstyle{blackvertex}=[circle,draw=black,fill=black]
    \tikzstyle{bluevertex}=[circle,draw=blue,fill=blue]
    \tikzstyle{greenvertex}=[circle,draw=green,fill=green]
    \tikzstyle{yellowvertex}=[circle,draw=yellow,fill=yellow]
    \tikzstyle{brownvertex}=[circle,draw=brown,fill=brown]
    \tikzstyle{redvertex}=[circle,draw=red,fill=red]
    \tikzstyle{brownvertex}=[circle,draw=brown,fill=brown]
    \tikzstyle{redvertex}=[circle,draw=red,fill=red]

    \node [blackvertex,scale=0.75] (a0) at (25.7:2) {};
    \node [blackvertex,scale=0.75] (a1) at (51.4:2) {};
    \node [blackvertex,scale=0.75] (a2) at (77.1:2) {};
    \node [blackvertex,scale=0.75] (a3) at (102.8:2) {};
    \node [blackvertex,scale=0.75] (a4) at (128.5:2) {};
    \node [blackvertex,scale=0.75] (a5) at (154.2:2) {};
    \node [blackvertex,scale=0.75] (a6) at (179.9:2) {};
    \node [blackvertex,scale=0.75] (a7) at (205.6:2) {};
    \node [blackvertex,scale=0.75] (a8) at (231.3:2) {};
    \node [blackvertex,scale=0.75] (a9) at (257:2) {};
    \node [blackvertex,scale=0.75] (a10) at (282.7:2) {};
    \node [blackvertex,scale=0.75] (a11) at (308.4:2) {};
    \node [blackvertex,scale=0.75] (a12) at (334.1:2) {};
    \node [blackvertex,scale=0.75] (a13) at (0:2) {};

    \node [scale=1] at (25.7:2.17) {0};
    \node [scale=1] at (51.4:2.17) {1};
    \node [scale=1] at (77.1:2.17) {2};
    \node [scale=1] at (102.8:2.17) {3};
    \node [scale=1] at (128.5:2.17) {4};
    \node [scale=1] at (154.2:2.17) {5};
    \node [scale=1] at (179.9:2.17) {6};
    \node [scale=1] at (205.6:2.17) {7};
    \node [scale=1] at (231.3:2.17) {8};
    \node [scale=1] at (257:2.17) {9};
    \node [scale=1] at (282.7:2.17) {10};
    \node [scale=1] at (308.4:2.17) {11};
    \node [scale=1] at (334.1:2.17) {12};
    \node [scale=1] at (0:2.17) {13};

\draw [black,thick] (a0) -- (a1) -- (a2) -- (a3) -- (a4) -- (a5) -- (a6) --(a7) -- (a8) -- (a9) -- (a10) -- (a11) -- (a12) -- (a13) -- (a0);
\draw [black,thick] (a0) -- (a3);
\draw [black,thick] (a0) -- (a5);
\draw [black,thick] (a0) -- (a9);
\draw [black,thick] (a0) -- (a11);
\draw [black,thick] (a1) -- (a4);
\draw [black,thick] (a1) -- (a6);
\draw [black,thick] (a1) -- (a10);
\draw [black,thick] (a1) -- (a12);
\draw [black,thick] (a2) -- (a5);
\draw [black,thick] (a2) -- (a7);
\draw [black,thick] (a2) -- (a11);
\draw [black,thick] (a2) -- (a13);
\draw [black,thick] (a3) -- (a6);
\draw [black,thick] (a3) -- (a8);
\draw [black,thick] (a3) -- (a12);
\draw [black,thick] (a4) -- (a7);
\draw [black,thick] (a4) -- (a9);
\draw [black,thick] (a4) -- (a13);
\draw [black,thick] (a5) -- (a8);
\draw [black,thick] (a5) -- (a10);
\draw [black,thick] (a6) -- (a9);
\draw [black,thick] (a6) -- (a11);
\draw [black,thick] (a7) -- (a10);
\draw [black,thick] (a7) -- (a12);
\draw [black,thick] (a8) -- (a11);
\draw [black,thick] (a8) -- (a13);
\draw [black,thick] (a9) -- (a12);
\draw [black,thick] (a10) -- (a13);

\end{tikzpicture}
\caption{Quadratic unitary Cayley graph admitting FR but not admitting PST}
\label{figure2}
\end{center}
\end{figure}

In this section, we obtain all the quadratic unitary Cayley graphs admitting FR. From Theorem~\ref{main theorem PGFR QUCG}, it is clear that if $G_n$ admits FR, then $n \in \{2, 8, 2p\}$ for some prime $p$. The graphs $G_2$ and $G_4$ admit FR. By Theorem~\ref{Chan FR}, the graph $G_8$ does not admit FR. For an odd prime $p$, we now prove that the graph $G_{2p}$ admits FR only when $p \equiv 3~(\mathrm{mod}~4)$.

\begin{lema}\label{2p first FR}
Let $n=2p$, where $p$ is a prime such that $p \equiv 3~(\mathrm{mod}~4)$. Then $G_n$ admits FR.
\end{lema}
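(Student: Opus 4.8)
The plan is to invoke the characterization of FR on circulant graphs given in Theorem~\ref{J. Wang}. Since $n=2p$ is even, condition~(i) of that theorem is automatically satisfiable by taking $b=a+p$, so the substance of the argument lies entirely in verifying condition~(ii): exhibiting a single positive real $t$ for which $\frac{t}{2\pi}(\lambda_x-\lambda_y)\in\mathbb{Z}$ for every pair $(x,y)\in N$. My first step is to record the eigenvalues of $G_{2p}$, which Lemma~\ref{2p first} already supplies: $\lambda_0=p-1$, $\lambda_p=1-p$, $\lambda_r=-1$ for even $r\neq 0$, and $\lambda_r=1$ for odd $r\neq p$. The key observation is that this is exactly the same eigenvalue list, with the same dependence on the parity of the index, that appears for the unitary Cayley graph $X_{2p}$ in Theorem~\ref{UCG FR}; hence the computation should proceed in perfect parallel with that proof.

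Next I would exploit the fact that the set $N$ consists precisely of the pairs $(x,y)$ with $x>y$ of the same parity, and split into cases according to the parity and according to whether $0$ or $p$ occurs. When $x,y$ are both even: if $y=0$ then $\lambda_x-\lambda_y=-1-(p-1)=-p$, whereas if both are even and nonzero then $\lambda_x-\lambda_y=0$. When $x,y$ are both odd: if $x=p$ then $\lambda_x-\lambda_y=(1-p)-1=-p$, if $y=p$ then $\lambda_x-\lambda_y=1-(1-p)=p$, and if neither equals $p$ then $\lambda_x-\lambda_y=0$. Collecting these yields $\lambda_x-\lambda_y\in\{-p,\,0,\,p\}$ for every $(x,y)\in N$.

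Finally I would set $t=\frac{2\pi}{p}$, so that $\frac{t}{2\pi}(\lambda_x-\lambda_y)\in\{-1,\,0,\,1\}\subseteq\mathbb{Z}$ for all $(x,y)\in N$, and conclude through Theorem~\ref{J. Wang} that $G_{2p}$ admits FR from any vertex $a$ to $a+p$. I do not expect a genuine obstacle in the case analysis itself, as it mirrors the $X_{2p}$ argument verbatim; the one point needing care is confirming the eigenvalue list of Lemma~\ref{2p first} for $p\equiv 3 \pmod 4$, namely that the quadratic-residue/non-residue split collapses so that $\lambda_r$ takes only the two values $\pm 1$ off the trivial indices. This collapse is exactly where the hypothesis $p\equiv 3\pmod 4$ enters: for such $p$ the Gauss-sum factor $\frac{1+\textbf{i}^{\,p}}{1+\textbf{i}}$ equals $-\textbf{i}$, whence $\chi_b(Q_p)+\overline{\chi_b(Q_p)}=-1$ independently of the Legendre symbol of $b$. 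Once this is in place, the FR conclusion follows immediately.
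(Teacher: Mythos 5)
Your proposal is correct and follows essentially the same route as the paper: the paper's proof likewise quotes the eigenvalue list from Lemma~\ref{2p first} and then defers to the case analysis of Theorem~\ref{UCG FR}, which yields $\lambda_x-\lambda_y\in\{-p,0,p\}$ and the choice $t=\frac{2\pi}{p}$ in Theorem~\ref{J. Wang}. Your extra verification that the Gauss-sum factor collapses to $-\mathbf{i}$ for $p\equiv 3\pmod 4$ is a correct spelling-out of what the paper delegates to Lemma~\ref{2p first}.
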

\begin{proof}
From Lemma~\ref{2p first}, recall that the eigenvalues of $G_n$ are given by
\[{\lambda}_r=\left\{ \begin{array}{ll} 
p-1 & \textrm{ if } r=0\\
-1 & \textrm{ if } r~\mathrm{is~even~and}~r \neq 0\\
 1-p & \textrm{ if } r=p~\\
 1 & \textrm{ otherwise. } \end{array}\right. \]
Then, following the exact procedure as in the proof of Theorem~\ref{UCG FR}, it can be proved that the graph $G_n$ admits FR. This completes the proof.
\end{proof}

\begin{lema}\label{2p second FR}
Let $n$ be a positive integer such that $n=2p$, where $p$ is a prime with $p \equiv 1~(\mathrm{mod}~4)$. Then the graph $G_n$ does not admit FR.
\end{lema}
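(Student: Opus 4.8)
The plan is to invoke the contrapositive of Corollary~\ref{non existence of FR}: if I can exhibit two pairs $(x_1,y_1),(x_2,y_2) \in N$ with $\lambda_{x_2} \neq \lambda_{y_2}$ whose eigenvalue-difference ratio is irrational, then $G_n$ cannot admit FR. Since $N$ consists of pairs of vertices of the same parity, I would work entirely within the even vertices, whose eigenvalues are already recorded in Lemma~\ref{2p second}. Under $\mathbb{Z}_{2p} \cong \mathbb{Z}_2 \times \mathbb{Z}_p$ the even vertices are exactly those of the form $(0,b)$, and they take the value $\frac{p-1}{2}$ at $b=0$, the value $\frac{-1+\sqrt{p}}{2}$ when $b$ is a quadratic residue of $p$, and the value $\frac{-1-\sqrt{p}}{2}$ when $b$ is a quadratic non-residue.

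First I would note that $p \equiv 1~(\mathrm{mod}~4)$ forces $p \geq 5$, so there are $\frac{p-1}{2} \geq 2$ nonzero quadratic residues and equally many non-residues of $p$. Hence there exist distinct nonzero even vertices $u$ (corresponding to a residue class) and $v$ (corresponding to a non-residue class). Taking the first pair from $u$ and $v$ gives $\lambda_u - \lambda_v = \pm\sqrt{p}$, while taking the second pair from the vertex $0$ and $v$ gives $\lambda_0 - \lambda_v = \pm\frac{p+\sqrt{p}}{2}$. Both pairs lie in $N$ because all the vertices involved are even, so the differences $x-y$ are even; the endpoints can be ordered to satisfy $x > y$ without affecting the argument, since the sign of a difference does not change whether a ratio is rational.

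The only genuine computation is to check that
$$\frac{\sqrt{p}}{\tfrac{1}{2}(p+\sqrt{p})} = \frac{2\sqrt{p}}{p+\sqrt{p}}$$
is irrational. Assuming it equals a rational number $q$ and clearing denominators yields $\sqrt{p}\,(2-q) = qp$, which forces either $\sqrt{p} \in \mathbb{Q}$ or $p = 0$, both impossible since $p$ is an odd prime. With the ratio shown irrational, Corollary~\ref{non existence of FR} immediately rules out FR on $G_{2p}$ for $p \equiv 1~(\mathrm{mod}~4)$.

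I expect the main obstacle to be bookkeeping rather than conceptual: one must verify that the chosen indices genuinely sit in $N$ (same parity, correctly ordered) and that $\lambda_{x_2} \neq \lambda_{y_2}$, so that the corollary applies. A cleaner alternative, if one prefers not to cite the corollary, is to appeal directly to condition (ii) of Theorem~\ref{J. Wang}: no single positive real $t$ can make both $\frac{t}{2\pi}\sqrt{p}$ and $\frac{t}{2\pi}\cdot\frac{p+\sqrt{p}}{2}$ integers, precisely because their quotient is the irrational number displayed above.
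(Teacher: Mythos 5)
Your proposal is correct and follows essentially the same route as the paper: both apply Corollary~\ref{non existence of FR} by exhibiting two pairs in $N$ whose eigenvalue-difference ratio is irrational, using the eigenvalue formulas from Lemma~\ref{2p second}. The only cosmetic difference is that the paper draws its first pair from the odd vertices (the sets $O_1, O_2$) whereas you stay entirely among the even vertices; both choices yield a ratio of the form $a + b\sqrt{p}$ with $b \neq 0$ rational, so the conclusion is the same.
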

\begin{proof}
Consider the sets $E_1, E_2, O_1$ and $O_2$ from the proof of Lemma~\ref{2p second}. Let $x_1 \in O_1$ and $y_1 \in O_2$ such that $x_1 > y_1$. Also, let $x_2 \in E_1$ and $y_2=0$.  Then we have
$${\lambda}_{x_1}=\frac{1-\sqrt{p}}{2},~~~{\lambda}_{y_1}=\frac{1+\sqrt{p}}{2},~~~ {\lambda}_{x_2}=\frac{-1+\sqrt{p}}{2}~~~\mathrm{and}~~~\lambda_{y_2} = \frac{p-1}{2}.$$
Now 
$$\frac{\lambda_{x_1}-\lambda_{y_1}}{\lambda_{x_2}-\lambda_{y_2}} = \frac{2}{p-1}+\frac{2}{p-1} \sqrt{p} \notin \mathbb{Q}.$$
Therefore, from Corollary~\ref{non existence of FR} we obtain that $G_n$ does not admit FR. This completes the proof.
\end{proof}
We note that Lemma~\ref{2p second FR} also follows from Cao and Luo~\cite{Cao} [see Theorem 2.2]. The preceding results are combined in the next theorem.
\begin{theorem}\label{main theorem for FR quadratic unitary Cayley graph}
The quadratic unitary Cayley graph $G_n$ admits FR if and only if $n \in \{2, 4, 2p\}$ for some prime $p$ such that $p \equiv 3~(\mathrm{mod}~4)$.
\end{theorem}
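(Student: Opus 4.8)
The plan is to read this theorem as the assembly of the section's lemmas, with the one conceptual lever being that fractional revival is a special case of pretty good fractional revival: taking the constant sequence $t_k = t$ in the definition of PGFR shows that FR implies PGFR. Hence any $G_n$ admitting FR also admits PGFR, and Theorem~\ref{main theorem PGFR QUCG} immediately confines the candidate orders to $n \in \{2, 8, 2p\}$ for some prime $p$. This converts an existence question over all $n$ into a finite case analysis together with a single infinite family.

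First I would clear the exceptional orders. The graph $G_2$ is the path on two vertices, which trivially admits FR, and $G_4 = C_4$ admits FR by Theorem~\ref{Chan FR}. For $n = 8$, the identification $G_8 = C_8$ together with Theorem~\ref{Chan FR} --- which permits FR on a cycle only when its length is $4$ or $6$ --- shows that $G_8$ does not admit FR, so the value $8$ is discarded from the list.

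Next I would split the family $n = 2p$ with $p$ an odd prime according to the residue of $p$ modulo $4$. For $p \equiv 3~(\mathrm{mod}~4)$, Lemma~\ref{2p first FR} already yields FR; for $p \equiv 1~(\mathrm{mod}~4)$, Lemma~\ref{2p second FR}, via the rationality obstruction of Corollary~\ref{non existence of FR}, rules it out. Collecting the survivors gives exactly $n \in \{2, 4, 2p\}$ with $p \equiv 3~(\mathrm{mod}~4)$, which matches the claim.

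The step requiring genuine care, rather than a real obstacle, is bookkeeping at $n = 4$: it appears in the PGFR list as $2p$ with $p = 2$, yet Lemmas~\ref{2p first FR} and~\ref{2p second FR} both assume $p$ odd, so $n = 4$ must be dispatched separately through the cycle result. Beyond this I expect no difficulty, since every substantive computation is already isolated in the preceding lemmas; were one proving the theorem from scratch, the hard part would be the non-existence for $p \equiv 1~(\mathrm{mod}~4)$, but with Lemma~\ref{2p second FR} in hand the theorem is simply the union of its parts.
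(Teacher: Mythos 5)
Your proposal is correct and follows essentially the same route as the paper: restrict via Theorem~\ref{main theorem PGFR QUCG} (FR implies PGFR) to $n \in \{2, 8, 2p\}$, settle $G_2$, $G_4 = C_4$ and $G_8 = C_8$ through Theorem~\ref{Chan FR}, and split $n = 2p$ by the residue of $p$ modulo $4$ using Lemmas~\ref{2p first FR} and~\ref{2p second FR}. Your extra remark on treating $n = 4$ separately (since the $2p$ lemmas assume $p$ odd) is a correct and welcome piece of bookkeeping that the paper handles implicitly.
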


We conclude by noting that Theorem~\ref{PST on QUCG} and Lemma~\ref{2p first FR}  altogether provide an infinite family of quadratic unitary Cayley graphs admitting FR that fail to admit PST. Also, Lemma~\ref{2p second} and Lemma~\ref{2p second FR} together provide an infinite family of quadratic unitary Cayley graphs admitting PGFR that fail to admit FR. In particular, the graph $G_{14}$ admits FR but does not admit PST. This graph is shown in Figure~\ref{figure2}.

\subsection*{Acknowledgements} The first author acknowledges funding received from the
Prime Minister’s Research Fellowship (PMRF), PMRF-ID: 1903283, Ministry of Education, Government of India, for carrying out this research work.

\end{document}